  \tikzset{mylabel/.style  args={at #1 #2  with #3}{
    postaction={decorate,
    decoration={
      markings,
      mark= at position #1
      with  \node [#2] {#3};
 } } } }
\newcommand{\ps}[1]{\llbracket {#1} \rrbracket}
\newcommand{\pps}[1]{\llparenthesis {#1} \rrparenthesis}
\DeclareMathOperator{\ann}{ann}
\let\@fnsymbol\@arabic
\theoremstyle{plain}
\newtheorem{theorem}{\bf Theorem}[section]
\newtheorem{corollary}[theorem]{Corollary}
\newtheorem{lemma}[theorem]{Lemma}
\newtheorem{proposition}[theorem]{Proposition}
\newtheorem{theoremx}{Theorem}
\newtheorem{examplex}[theoremx]{Example}
\theoremstyle{definition}
\newtheorem{definition}[theorem]{Definition}
\newtheorem{example}[theorem]{\bf Example}
\newtheorem{remark}[theorem]{Remark}
\newtheorem*{theorem*}{\bf Theorem}
\newcommand{\codim}{\operatorname{codim} }
\newcommand{\init}{\operatorname{in} }
\newcommand{\chara}{\operatorname{char} }
\newcommand{\N}{\mathbb{N}}
\newcommand{\Z}{\mathbb{Z}}
\newcommand{\asdim}{\operatorname{a-sdim} }
\newcommand{\gsdim}{\operatorname{sdim} }
\newcommand{\Min}{\operatorname{Min} }
\newcommand{\Ass}{\operatorname{Ass} }
\newcommand{\Tor}{\operatorname{Tor} }
\newcommand{\height}{\operatorname{height} }
\newcommand{\Ker}{\operatorname{Ker} }
\renewcommand{\Im}{\operatorname{Im} }
\newcommand{\Proj}{\operatorname{Proj} }
\newcommand{\Spec}{\operatorname{Spec} }
\newcommand{\Ext}{\mathrm{Ext}}
\newcommand{\depth}{\operatorname{depth} }
\newcommand{\pp}{\mathfrak{p}}
\newcommand{\qq}{\mathfrak{q}}
\newcommand{\mm}{\mathfrak{m}}
\newcommand{\aaa}{\mathfrak{a}}
\newcommand{\nn}{\mathfrak{n}}
\newcommand{\homo}{\operatorname{hom} }
\definecolor{mypink}{RGB}{215, 5, 234}
\newcommand{\ZZ}{\ensuremath{\mathbb{Z}}}
\newcommand{\e}{\operatorname{e}}
\newcommand{\HP}{\mathrm{HP}}
\newcommand{\HS}{\mathrm{HS}}
\newcommand{\HF}{\mathrm{HF}}
\newcommand{\whomo}{\overline{\operatorname{hom}}}
\begin{document}
\title{From a local ring to its associated graded algebra}
\author[A. De Stefani]{Alessandro De Stefani} 
\email{alessandro.destefani@unige.it}
\author[M.E. Rossi]{Maria Evelina Rossi}
\email{rossim@dima.unige.it}
\author[M. Varbaro]{Matteo Varbaro} 
\email{matteo.varbaro@unige.it}
\address{Dipartimento di Matematica, Universit\'a di Genova, Italy} 
  \date{}
\maketitle

\begin{abstract}
Let $(R,\mathfrak{m})$ be a complete local ring, and $G={\rm gr}_{\mathfrak{m}}(R)$ be its associated graded ring. We introduce a homogenization technique which allows to relate $G$ to the special fiber and $R$ to the generic fiber of a ``Gr{\"o}bner-like" deformation. Using this technique we prove sharp results concerning the connectedness of $R$ and $G$. We also construct a family of local domains which fail to satisfy Abhyankar's inequality for the Hilbert-Samuel multiplicity. However, we prove a version of the inequality which holds when $R$ is connected in codimension one.
\end{abstract}

\section{Introduction}

Let $(R,\mm,k)$ be a local ring, and $G=\bigoplus_{i\in\N}\mm^i/\mm^{i+1}$ be its associated graded $k$-algebra. In this article we introduce a technique which is inspired by Gr\"obner deformations, and which plays a fundamental role in relating properties of $R$ and $G$. More specifically, we associate $G$ to the special fiber and $R$ to the generic fiber of a deformation with total space which we call $\homo(R)$. For our purposes, it is crucial that $\homo(R)$ is a complete local ring.


\begin{theoremx}[see Theorems \ref{t:deform equichar} and \ref{t:deform mixed}] \label{THMX 0}
Let $(R,\mm,k)$ be a complete local ring. There exists a complete local ring $\homo(R)$, and non-zero divisor $t$ on $\homo(R)$, such that 
\[
\homo(R)/(t)\cong \widehat{G} \ \text{ and } \ \widehat{\homo(R)\otimes_{V\ps{t}} V\pps{t}}\cong \widehat{R\otimes_V V\pps{t}},
\]
where $V=k$ if $R$ has equal characteristic, while it is a complete unramified discrete valuation ring with residue field $k$ if $R$ has mixed characteristic.
\end{theoremx}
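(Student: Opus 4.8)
The plan is to construct $\homo(R)$ as an ``$\mm$-adic homogenization'' of a Cohen presentation of $R$, and to recognize its deformation parameter $t$ through the extended Rees algebra of $\mm$; the flatness of this deformation is the crux.

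\emph{Construction.} Fix a Cohen presentation $R = S/I$ with $S = k\ps{x_1,\dots,x_n}$ in equal characteristic and $S = V\ps{x_1,\dots,x_n}$, $V$ the Cohen ring of $k$, in mixed characteristic. Since $S$ is regular, $P := \mathrm{gr}_{\mm_S}(S)$ is a polynomial ring over $k$ on the initial forms of a regular system of parameters, namely $\init x_1,\dots,\init x_n$ in equal characteristic and $\init p,\init x_1,\dots,\init x_n$ in mixed characteristic, and $G = \mathrm{gr}_{\mm}(R)\cong P/\init(I)$, where $\init(I) := (\init f : f\in I)$ and $\init$ denotes the $\mm_S$-adic initial form. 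Let $A$ be the complete regular local ring obtained from $P$ by adjoining one power-series variable $t$ and, in mixed characteristic, replacing the coefficient field $k$ by $V$: so $A = k\ps{z_1,\dots,z_n,t}$ in equal characteristic and $A = V\ps{z_0,z_1,\dots,z_n,t}$ in mixed characteristic, the $z_i$ being the variables of $P$. For $f\in I$ of $\mm_S$-adic order $v(f)$, with $\mm_S$-homogeneous decomposition $f = \sum_{d\ge v(f)} f_d$, put $f^{\homo} := \sum_{d\ge v(f)} f_d\, t^{\,d-v(f)}\in A$, each $f_d$ read in the variables $z_i$ (homogenizing also the prime $p$, of $\mm_S$-order $1$, into $z_0$). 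Set $\homo(I) := (f^{\homo} : f\in I)$ and
\[
\homo(R) := A/\homo(I) \quad (\text{equal char}), \qquad \homo(R) := A/(\homo(I)+(z_0 t - p)) \quad (\text{mixed char}).
\]
This is a complete Noetherian local ring with residue field $k$; the relation $z_0 t = p$ in the mixed characteristic case is what promotes $\init p$ to a genuine polynomial variable of the special fiber while keeping $\homo(R)$ of mixed characteristic, and arranging this correctly is the one genuinely delicate part of the construction.

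\emph{The two isomorphisms.} Reducing a relation $\sum_j h_j f_j^{\homo}$ modulo $t$ — hence also modulo $p = z_0 t$ in mixed characteristic — yields $\sum_j \overline{h_j}\,\init f_j \in \init(I)$, while each $\init f$ equals $f^{\homo}|_{t=0}$; so $\homo(R)/(t)$ is the completion of $P/\init(I) = G$ at its homogeneous maximal ideal, i.e. $\homo(R)/(t)\cong\widehat G$. For the generic fiber: inverting $t$ removes $z_0 = p\,t^{-1}$, and the substitution $z_i\mapsto z_i t^{-1}$ is an automorphism of $S\ps{t}[t^{-1}]$ under which $f^{\homo}\mapsto t^{-v(f)}f$; hence $\homo(R)[t^{-1}]\cong (S/I)\ps{t}[t^{-1}] = R\ps{t}[t^{-1}]$, and completing identifies this with $\widehat{R\otimes_V V\pps t}$.

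\emph{Flatness, the main obstacle.} It remains to show that $t$ is a nonzerodivisor on $\homo(R)$, equivalently that the deformation is flat. I would obtain this by identifying $\homo(R)$ with the completion $\widehat{\mathcal R'}$, at its homogeneous maximal ideal, of the extended Rees algebra $\mathcal R' = \bigoplus_{n\in\Z}\mm^n s^n \subseteq R[s,s^{-1}]$ of $\mm$ (with $\mm^n := R$ for $n\le 0$), the parameter $t$ corresponding to $s^{-1}$. The map $A\to\widehat{\mathcal R'}$ sending $z_i\mapsto x_i s$, $z_0\mapsto p s$, $t\mapsto s^{-1}$ is a well-defined surjection killing $\homo(I)$ and $z_0 t - p$; modulo $t$ it induces the identity of $\widehat G$ (computed from the homogenization on one side and from $\mathcal R'/(s^{-1})\cong G$ on the other). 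Since $s^{-1}$ is a nonzerodivisor on $\mathcal R'$ — it is a unit in $R[s,s^{-1}]$ — flatness of completion over the Noetherian ring $\mathcal R'$ makes $t$ a nonzerodivisor on $\widehat{\mathcal R'}$; and since $\homo(R)$ is Noetherian local, hence $t$-adically separated by Krull's intersection theorem, the standard argument (if $x\in\ker$ then $x\in t\cdot\ker$, so $\ker\subseteq\bigcap t^m\homo(R)=0$) forces the surjection $\homo(R)\to\widehat{\mathcal R'}$ to be an isomorphism, whence $t$ is a nonzerodivisor on $\homo(R)$. The hardest point is thus this flatness, intertwined with the mixed characteristic bookkeeping around $\init p$; once it is in place the two displayed isomorphisms are essentially formal. (Alternatively, flatness can be proved directly by homogenizing a standard basis of $I$ and lifting the syzygies of the initial forms, in the spirit of Mora's tangent cone algorithm, avoiding the Rees algebra altogether.)
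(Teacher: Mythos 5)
Your construction is essentially the same as the paper's, modulo a variable shift and starting from the $\mm_S$-adic filtration rather than from the paper's two-step device of adjoining a variable $x_1$ and mapping it to $p$ (your relation $z_0 t - p$ plays exactly the role of the paper's $x_1 t - p$ from Remark \ref{r:p,t}). The one place where you take a genuinely different route is the nonzerodivisor property of $t$: you identify $\homo(R)$ with the completion of the extended Rees algebra at its homogeneous maximal ideal and import the fact that $s^{-1}$ is a nonzerodivisor on $\mathcal{R}'$ via flatness of completion, closing the loop with a Nakayama argument and the special-fiber isomorphism. The paper instead proves Lemma \ref{l:nzd} directly by a short $t$-homogeneous peeling argument (much closer to your parenthetical ``Mora-style'' alternative), and it explicitly flags in the introduction that it wants to \emph{avoid} the extended Rees algebra, the selling point being that $\homo(R)$ is a complete local ring with explicit defining equations. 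Your route is arguably more conceptual, but it makes the nonzerodivisor property logically depend on first having the special-fiber isomorphism and the surjection onto $\widehat{\mathcal{R}'}$, whereas the paper establishes $t$ is a nonzerodivisor self-contained and up front.

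Two points where your write-up genuinely under-delivers and the paper does real work. First, in mixed characteristic there is no canonical ``$\mm_S$-homogeneous decomposition'' $f=\sum_d f_d$ of an element of $S=V\ps{x_1,\dots,x_n}$: making sense of the pieces $f_d$, and of reading them back in $A$ with coefficients in $V$, requires choosing a set-theoretic lift $[-]:k\to V$ (Teichm\"uller-type), and one must then check that the resulting $\homo(I)$ is independent of this choice. You flag this as ``the one genuinely delicate part'' but do not resolve it; Section 2.2 of the paper is almost entirely this bookkeeping. Second, for the generic fiber you treat $\homo(R)\otimes_{V\ps{t}} V\pps{t}\cong \homo(R)[t^{-1}]$ and ``completing identifies this with $\widehat{R\otimes_V V\pps{t}}$'' as formal; it is not. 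The two rings being completed are not local, the relevant maximal ideal has a nontrivial residue field extension $\kappa_A\pps{t}$, and identifying both completions with $A'\ps{x_1,\dots,x_n}$ (for $A'=\widehat{A\pps{t}}$) requires Cohen's structure theorem and a dimension count; this is the content of Proposition \ref{p:iso} and the explicit automorphism $\gamma$, which your sketch assumes rather than proves.
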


Theorem \ref{THMX 0} has several advantages compared to the more classical approaches, such as the one using the extended Rees algebra. Among others, $\homo(R)$ is a complete local ring, as already mentioned. Moreover, it has a rather concrete description in terms of defining equations.

In order to obtain Theorem \ref{THMX 0} we run into some technical difficulties. For example, the proof that the completion of $R$ is a domain if and only if $\homo(R)$ is a domain (see Proposition \ref{p:prime general}) is not as smooth as the analog in the framework of classical Gr\"obner bases theory. Moreover, the equal and mixed characteristic cases need a different treatment, although both approaches include the case when the ring $R$ has positive characteristic. In this situation, they turn out to agree (see Remark \ref{r:agree}). In classical Gr\"obner bases theory, this type of construction has proved itself to be extremely powerful in many regards (see \cite{DEP,BaSt,KalkSturmfels,Green,KnMi,ConcaVarbaro} for just a small selection). We believe that the one introduced in this article for local rings could have similar useful applications, and part of this work is committed to studying some of them.

While classical results in the theory of local rings ensure that several desirable properties such as being reduced, normal, Cohen-Macaulay, Gorenstein, complete intersection and many others carry over from $G$ to $R$ (for instance, see \cite{CavaliereNiesi,AvramovAchilles,Froberg}), the construction of Theorem \ref{THMX 0} allows to get information in the other direction. Namely, it allows to study some properties that pass from $R$ to $G$. 
For example, it is well known that if $R$ is Cohen-Macaulay then $G$ needs not be Cohen-Macaulay. However, we prove that in this case $G$ is at least connected in every positive codimension (see Section \ref{sconnsub} for the definition of ``connected in codimension $s\in\N$"). 
More precisely we show: 

\begin{theoremx}[see Theorem \ref{t:connectedness}] \label{THMX A} Let $R$ be a complete local ring and $G$ be its associated graded ring. Let $s>0$ be an integer. If $R$ is connected in codimension $s$, then $G$ is connected in codimension $s$. If $G$ is reduced, then the converse holds true as well.
\end{theoremx}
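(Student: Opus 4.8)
The plan is to transfer connectedness across the flat family constructed in Theorem A. Recall the general principle: for a Noetherian local ring $A$ and a nonzerodivisor $t$ in the Jacobson radical with $A/(t)$ of dimension $d-1$, connectivity properties of the special fiber $A/(t)$ are controlled by those of the generic fiber, and vice versa, provided one keeps careful track of how minimal primes and intersections of unions of components behave under specialization. Concretely, I would use $A = \homo(R)$ and the nonzerodivisor $t$ of Theorem A, so that $A/(t) \cong \widehat{G}$ and the ``generic fiber'' after completion recovers $\widehat{R \otimes_V V\pps{t}}$, which is faithfully flat over $\widehat{R}$. The first step is a reduction: since connectedness in codimension $s$ for a Noetherian local ring is equivalent to connectedness in codimension $s$ for its completion (the spectrum and its punctured deleted pieces have the same number of connected components after completion, because completion induces a bijection on minimal primes preserving dimensions and containments — one should cite the standard fact that $\widehat{R}$ is connected in codimension $s$ iff $R$ is, since $R \to \widehat{R}$ is faithfully flat with zero-dimensional fibers at $\mm$), we may pass freely between $R$, $\widehat{R}$, $G$, $\widehat{G}$. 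Thus it suffices to prove: $\homo(R)$ connected in codimension $s$ $\Rightarrow$ $\widehat{G} = \homo(R)/(t)$ connected in codimension $s$, together with $\widehat{R}$ connected in codimension $s$ $\iff$ $\homo(R)$ connected in codimension $s$; and the reverse implication when $G$ is reduced.

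The core is the following lemma, which I would isolate: if $(A,\nn)$ is a complete local ring, $t \in \nn$ a nonzerodivisor, and $A$ is connected in codimension $s$, then $A/(t)$ is connected in codimension $s$. This is essentially Hartshorne's connectedness theorem in its local, relative form: the key point is that $\dim A/(t) = \dim A - 1$, and that $\Spec(A/(t))$ sits inside $\Spec(A)$ as a hyperplane section through the closed point. One argues contrapositively: if $A/(t)$ is disconnected in codimension $s$, write $A/(t) = A/(\mathfrak{a} \cap \mathfrak{b})$-type decomposition via ideals $I, J$ with $I + J$ of height $> s+1$ in $A/(t)$ and $I \cap J$ nilpotent; lift $I, J$ to ideals of $A$ and use that $t$ is a nonzerodivisor to conclude $\height(I' + J' \text{ in } A) > s+1$ as well, contradicting connectedness in codimension $s$ of $A$. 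The subtle bookkeeping — the ``main obstacle'' — is precisely in matching up the codimension bounds across the hyperplane section: one must verify that cutting by the single nonzerodivisor $t$ drops the height of the ``bad locus'' $V(I+J)$ by at most one, and simultaneously that it does not merge connected components in a way that improves connectivity spuriously. Standard: since $t$ is a nonzerodivisor it avoids all minimal primes of $A$, so every minimal prime of $A/(t)$ lies over a minimal prime of $A$ with dimension exactly one less; the height computation for the union locus follows.

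For the converse under the hypothesis that $G$ is reduced, I would run the argument in the other direction along the same family: here $\widehat{G}$ reduced forces $t$ to generate a radical ideal in $\homo(R)$ (since $\homo(R)/(t) \cong \widehat{G}$), hence $\homo(R)$ is reduced too (a nonzerodivisor $t$ with $A/(t)$ reduced in a complete local ring forces $A$ reduced, because the $t$-adic filtration is separated and $\bigcap (t^n) = 0$). Now if $\widehat{G}$ is connected in codimension $s$, then its minimal primes, which correspond bijectively to those of $\homo(R)$ (each containing $t$, since $t$ is in the radical... wait — rather: the minimal primes of $A/(t)$ pull back to primes of $A$ minimal over $(t)$, and when $A$ is reduced and $t$ is a nonzerodivisor these are exactly the height-one primes among a set in bijection with $\Min(A/(t))$), are organized so that the analogous union locus has the right height; lifting back through the flat family and specializing to the generic fiber gives connectedness in codimension $s$ of $\widehat{R}$, hence of $R$. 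The one place requiring genuine care is that the reducedness hypothesis is what lets one go from ``$\Spec \homo(R)$ connected in codim $s$'' back to ``$\Spec \widehat{R}$ connected in codim $s$'' — without it the generic fiber could a priori be better connected than the total space. I expect the write-up to lean on Proposition \ref{p:prime general} (domain $\iff$ domain) generalized to counting/comparing minimal primes, plus the elementary hyperplane-section height lemma above; the genuinely non-formal input is the local Hartshorne-type connectedness estimate, and getting the ``$s+1$ versus $s$'' indexing exactly right there is where I would be most careful.
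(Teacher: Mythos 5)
Your high-level architecture matches the paper's exactly: reduce to the complete case, transfer the problem from $R$ to $\homo(R)$ via the homogenization correspondence, and then move between $\homo(R)$ and $\homo(R)/(t)\cong \widehat{G}$ using a hyperplane-section connectedness statement, with the radical hypothesis feeding the converse. You also correctly identify that $(t)$ being radical is what the reducedness of $G$ buys you. But the proposal has a genuine gap at the step you yourself flag as the one needing care.

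The paper's hyperplane-section input is Theorem \ref{t:groconn}: for a complete local $A$ and $x$ avoiding all minimal primes, connectedness in codimension $s>0$ passes from $A$ to $A/(x)$ (Grothendieck's connectedness theorem), and \emph{conversely} when $(x)$ is radical. Your sketch of the converse is where the argument breaks down. Knowing that $\homo(R)$ is reduced and that $\Min(A/(t))$ is in bijection with the primes of $A$ minimal over $(t)$ does not, by itself, let you lift a bound $\dim(A/(x)/(I'+J')) \ge (d-1)-s$ back to $\dim(A/(I+J)) \ge d-s$ for a partition of $\Min(A)$. Two nonformal ingredients are missing. First, one needs the equality $(I+(x))\cap(J+(x)) = I\cap J + (x)$ when $I,J$ are intersections of minimal primes of the reduced ring $A$; this is exactly where radicality of $(x)$ enters (the containment $\subseteq$ holds only up to radical a priori, and $(x)=\sqrt{(x)}$ forces it on the nose). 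Second, one needs $\Tor_1^A(A/(I+J),A/(x))=0$, i.e.\ that $x$ avoids every associated prime of $A/(I+J)$, to get $\dim(A/(I+J)) = \dim(A/(I+J+(x)))+1$; this follows from the short exact sequence $0\to A/(I\cap J)\to A/I\oplus A/J\to A/(I+J)\to 0$ tensored with $A/(x)$, using the above equality to kill the $\Tor_1$ term. Without these two facts the ``analogous union locus has the right height'' claim is unsupported, and in fact one can lose exactly one codimension in the descent, which is why Brodmann's older result only gives connectedness in codimension $s+1$. Also, your framing of the transfer $R\leftrightarrow\homo(R)$ through the flat ``generic fiber'' is a red herring: the paper establishes $\Gamma_s(R)=\Gamma_s(\homo(R))$ (Proposition \ref{homoprop general}(7)) by a direct homogenization/dehomogenization argument via the subring $\overline S$ and the map $\Theta$, not by descent along the faithfully flat map to the generic fiber, which would not obviously control the $\Gamma_s$-graphs. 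Your detour asserting $\homo(R)$ is reduced is correct but unnecessary; the paper uses only that $(t)$ is radical.
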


Theorem \ref{THMX A} is the analog of certain results in classical Gr\"obner bases theory, which were inspired by a conjecture of Kredel and Weispfenning \cite{KW}, and were obtained in various steps in \cite{KalkSturmfels, Varbaro, ALNBRM}. We point out that a related result to Theorem \ref{THMX A} was obtained by Brodmann \cite[Proposition (2.5)]{Brodmann} in terms of connectedness dimension of the blowup $\pi: \Proj(R[I T]) \to \Spec(R)$ and its special fiber $\pi^{-1}(V(J))$ for two given ideals $I \subseteq J \subseteq \mm$,
using results of \cite{BrodmannRung}. Translated to our language, in the case $I=J=\mm$ Brodmann's theorem states that if $R$ is connected in codimension $s \geq 0$, then $G$ is connected in codimension $s+1$. Note that, if $R$ is connected in codimension $s=0$, then it is also connected in codimension $s=1$. Therefore the first part of Theorem \ref{THMX A} strengthens the case $I=J=\mm$ of \cite[Proposition (2.5)]{Brodmann} for all $s \geq 0$. 
We also obtain results relating the subdimensions of $R$ and $G$, proving that $\gsdim(G) \geq \gsdim(R)$ always holds under mild assumptions on $R$. Moreover, equality holds if $G$ satisfies Serre's condition $(S_1)$, see Theorem \ref{t:algsdim}.

In Section \ref{Section bounds}, we focus on numerical invariants of local rings. Among these, we consider the Hilbert-Samuel multiplicity $\e(R) = \lim_{n \to \infty} d! \ell_R(R/\mm^n)/n^d$, where $d=\dim(R)$. Recall that $\e(R)$ coincides with the multiplicity of its associated graded ring $G$, that is, the (normalized) leading coefficient of its Hilbert polynomial. When $(R,\mm)$ is Cohen-Macaulay, a celebrated inequality of Abhyankar gives that $\e(R) \geq \mu(\mm) - \dim(R) + 1$, where $\mu(-)$ denotes the minimal number of generators of a module \cite{Abhyankar}. 
For Cohen-Macaulay standard graded $k$-algebras Abhyankar's inequality is trivial, and it is true also for standard graded $k$-algebras connected in codimension 1 assuming that $k$ is an algebraically closed field (see for instance \cite[Proposition 5.2]{GSyz}). One might wonder if this is still true for local domains; however, this is not the case.

%

\begin{examplex}[Example \ref{Ex}] \label{THMX B} Let $k$ be a field. There exists a family of $2$-dimensional complete local domains $\{(R_n,\mm_n)\}_{n \in \ZZ_{>0}}$, containing $k$, with $\mu(\mm_n)=4$ and $\e(R_n) = 2$ for all $n$. Furthermore, each $R_n$ is an isolated singularity and its associated graded ring is unmixed.
\end{examplex}

For every $n$, the associated graded ring of each above local domain $R_n$ has two nilpotent linear forms which are linearly independent over $k$. If one takes into account the $k$-dimension of the space of linear forms that are nilpotent in the associated graded ring, one gets a version of Abhyankar's inequality which holds for every local ring which is connected in codimension one.

\begin{theoremx}[Theorem \ref{thm h2 and multiplicity linear forms}] Let $(R,\mm)$ be a complete local ring connected in codimension one, and assume that $R$ has an algebraically closed residue field. If $\ell=\dim_k\left(\left[G_{\mathrm{red}}\right]_1\right)$, then $\e(R) \geq \mu(\mm)-\dim R + 1 - \ell$. Moreover, if $G$ satisfies Serre's condition $(S_1)$ and $\ell \ne 0$, then the inequality is strict.
\end{theoremx}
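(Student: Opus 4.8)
The plan is to transfer the inequality down to the associated graded ring $G$ and then to its reduction $G_{\mathrm{red}}$, where a graded analogue of Abhyankar's inequality is available. First I would record the standard identities $\dim R=\dim G=\dim G_{\mathrm{red}}$, $\mu(\mm)=\dim_k G_1$ and $\e(R)=\e(G)$, and note that $\ell$, the dimension of the $k$-space of nilpotent linear forms of $G$, satisfies $\mu(\mm)-\ell=\dim_k\big([G_{\mathrm{red}}]_1\big)$, the embedding dimension of $G_{\mathrm{red}}$; thus the asserted bound is equivalent to
\[
\e(G)\ \geq\ \dim_k\big([G_{\mathrm{red}}]_1\big)-\dim G_{\mathrm{red}}+1 .
\]

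The core of the proof is the chain $\e(R)=\e(G)\geq\e(G_{\mathrm{red}})\geq\dim_k\big([G_{\mathrm{red}}]_1\big)-\dim G_{\mathrm{red}}+1$. For the rightmost inequality, by Theorem~\ref{t:connectedness} the ring $G$ is connected in codimension one since $R$ is, and because this property depends only on the topological space $\Spec G=\Spec G_{\mathrm{red}}$, so is $G_{\mathrm{red}}$; since $G_{\mathrm{red}}$ is a reduced standard graded algebra over the algebraically closed field $k$, the graded form of Abhyankar's inequality (\cite[Proposition 5.2]{GSyz}) applies to it and gives $\e(G_{\mathrm{red}})\geq\dim_k\big([G_{\mathrm{red}}]_1\big)-\dim G_{\mathrm{red}}+1$. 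For the middle inequality, via $\Spec G_{\mathrm{red}}=\Spec G$ one has $\Assh(G_{\mathrm{red}})=\Assh(G)$ and $G_{\mathrm{red}}/\pp=G/\pp$ for every $\pp\in\Assh(G)$, so by the associativity formula for multiplicities
\[
\e(G)=\sum_{\pp\in\Assh(G)}\ell_{G_\pp}(G_\pp)\,\e(G/\pp),\qquad \e(G_{\mathrm{red}})=\sum_{\pp\in\Assh(G)}\e(G/\pp),
\]
the second equality holding because $(G_{\mathrm{red}})_\pp$ is a field for $\pp$ minimal; since every length $\ell_{G_\pp}(G_\pp)$ is at least $1$, we conclude $\e(G)\geq\e(G_{\mathrm{red}})$.

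For the strict version, assume $G$ satisfies $(S_1)$ and is not reduced (if $G$ is reduced then $\ell=0$ and the statement is the classical Abhyankar inequality). It is enough to find $\pp\in\Assh(G)$ with $\ell_{G_\pp}(G_\pp)\geq 2$, since then the associativity formulas above give $\e(R)=\e(G)>\e(G_{\mathrm{red}})\geq\dim_k\big([G_{\mathrm{red}}]_1\big)-\dim G_{\mathrm{red}}+1$. Now $G_{\mathrm{red}}$, being reduced and connected in codimension one, is equidimensional: distinct irreducible components of $\Spec G_{\mathrm{red}}$ are never nested, so a component of strictly smaller dimension would meet the union of the others along a closed set of codimension $\geq 2$ whose removal would disconnect $\Spec G_{\mathrm{red}}$, contradicting connectedness in codimension one. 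Hence $\Min(G)=\Assh(G)$, and combined with $(S_1)$, i.e. $\Ass(G)=\Min(G)$, the ring $G$ is unmixed. As $G$ is not reduced it fails $(R_0)$, so $G_\pp$ fails to be a field for some $\pp\in\Min(G)=\Assh(G)$, giving $\ell_{G_\pp}(G_\pp)\geq 2$ and hence strictness.

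I expect the main obstacle to lie in the two geometric inputs: having the graded Abhyankar inequality available for reduced standard graded algebras connected in codimension one over an algebraically closed field, and verifying that ``connected in codimension one'' (in the sense of Section~\ref{sconnsub}) both passes to $G_{\mathrm{red}}$ and forces a reduced ring to be equidimensional --- these topological facts are what make the argument run. The remaining ingredients --- $\dim R=\dim G$, $\mu(\mm)=\dim_k G_1$, $\e(R)=\e(G)$, the identification $\Assh(G)=\Assh(G_{\mathrm{red}})$, and the associativity formula --- are routine.
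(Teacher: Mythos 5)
Your proof is correct and follows essentially the same strategy as the paper: reduce to $G$ via $\e(R)=\e(G)$, invoke Theorem~\ref{t:connectedness} to get that $G$ (hence $G_{\mathrm{red}}$) is connected in codimension one, apply the graded Abhyankar inequality to $G_{\mathrm{red}}$, and compare $\e(G)$ with $\e(G_{\mathrm{red}})$ via the associativity formula (where your verification of $\e(G)\geq\e(G_{\mathrm{red}})$ is in fact more explicit than the paper's, which merely asserts it). The one genuine difference is in the strictness step: the paper, having reduced to $\ell>0$, picks a primary component $\qq$ of the defining ideal whose radical contains a linear form but $\qq$ itself does not, and uses that linear form to get $\ell_{G_\pp}(G_\pp)\geq 2$; you instead argue that $(S_1)$ together with non-reducedness forces failure of $(R_0)$, so $G_\pp$ is not a field for some minimal $\pp$, and equidimensionality places $\pp$ in $\Assh(G)$. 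Your criterion is cleaner and in fact a bit more general (a nilpotent linear form implies $G$ is non-reduced, but not conversely). One small caution: dismissing the $G$-reduced case of strictness as ``the classical Abhyankar inequality'' is not by itself adequate, since Abhyankar's bound can be an equality (e.g. for $R$ regular); the paper handles $\ell=0$ by citing \cite[Theorem 5.6]{DMV}, and the strictness assertion is really meant to be read with $\ell>0$ in force.
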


\section{Gr{\"o}bner deformations for complete local rings} \label{Section deformation}

In this section we develop the main tool of this article: a ``Gr{\"o}bner-deformation'' argument which allows to relate properties of any complete local ring $(R,\mm)$ with those of its associated graded ring $G$. The treatment will be slightly different for rings that contain a field and rings of mixed characteristic: for the latter, we need to employ a trick which allows to connect both $G$ and $R$ to a quotient of a single complete unramified regular local ring of mixed characteristic. 

First we prove some results which are common to both setups. The following notation will be adopted throughout this section: let $(A,\mm_A,\kappa_A)$ be a complete regular local ring which either contains a field or is unramified, that is, $\chara(A) \ne \chara(\kappa_A) \notin \mm_A^2$. Let $S=A\ps{x_1,\ldots,x_n}$, $\mm=(x_1,\ldots,x_n)$, and consider $\psi:S\to S\ps{t}=:T$ the homomorphism of $A$-algebras defined by setting $x_i \mapsto x_it$. Given $0 \ne f \in S$ we let $o(f) = \sup\{i \mid f \in \mm^i\}$, and we set $\homo(f) = t^{-o(f)}\psi(f)$. It follows immediately from the given definition that $\homo(fg) = \homo(f)\homo(g)$ for all $0\ne f,g \in S$. If $I \subseteq S$ is an ideal we let 
\[
\homo(I) = (\homo(f) \mid 0\ne f \in I) \subseteq T.
\]

Note that, by the very definition of $T \cong A\ps{x_1,\ldots,x_n}\ps{t}$, any element $F \in T$ can be written uniquely as $\sum_{i \geq 0}t^i \sum_{j \geq 0} F_{(i,j)}$ for some polynomials $F_{(i,j)} \in A[x_1,\ldots,x_n]=:P$ which are homogeneous of degree $j$ in the variables $x_1,\ldots,x_n$ (all the variables of $P$ are given degree $1$, while any element of $A$ is given degree $0$).

\begin{definition} We say that $0 \ne F \in T$ is $t$-homogeneous of degree $d \in \Z$ if, when written as $F=\sum_{i \geq 0}t^i \sum_{j \geq 0} F_{(i,j)}$, one has $F_{(i,j)} = 0$ for all $j \ne i+d$.
\end{definition}

We observe that, given $0 \ne f \in S$, the element $\homo(f) \in T$ defined above is $t$-homogeneous of degree $o(f)$ by construction. In particular, $\homo(I)$ is a $t$-homogeneous ideal, that is, an ideal of $T$ generated by $t$-homogeneous elements.

\begin{lemma}\label{l:homogeneous general} Any $F\in T$ can be written in a unique way as $F=\sum_{d\in\Z}F_d$  where $F_d$ is $t$-homogeneous of degree $d$. If $J\subseteq T$ is a $t$-homogeneous ideal, then the following are equivalent:
\begin{enumerate} 
\item $F\in J$.
\item $F_d\in J$ for all $d\in\Z$.
\end{enumerate}
\end{lemma}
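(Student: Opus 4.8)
The plan is to treat the $t$-homogeneous components of an element of $S$ exactly as one treats the graded pieces of an element of a $\Z$-graded ring; the only real subtlety is that $S$ is a complete local ring, not a graded ring, so the decomposition $F=\sum_{d\in\Z}F_d$ must be read as a series converging in the $\mm_S$-adic topology, where $\mm_S=\mm_AS+(x_1,\ldots,x_n,t)$ is the maximal ideal of the complete regular local ring $S=A\ps{x_1,\ldots,x_n,t}$ (in particular $S$ is a Noetherian domain). The one quantitative fact that makes everything run is worth isolating first: a $t$-homogeneous element of degree $d$ lies in $\mm_S^{|d|}$. Indeed, writing such an element as $\sum_i F_{(i,i+d)}t^i$ with $i$ ranging over $i\geq\max(0,-d)$, the summand $F_{(i,i+d)}t^i$ lies in $(x_1,\ldots,x_n)^{i+d}(t)^i\subseteq\mm_S^{2i+d}$, and $2i+d\geq|d|$ throughout the relevant range.

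Given $F=\sum_{i\geq 0}t^i\sum_{j\geq 0}F_{(i,j)}$ in its unique expansion, I would set $F_d:=\sum_{i\geq 0}F_{(i,i+d)}t^i$ (with the convention $F_{(i,j)}:=0$ for $j<0$). This is literally an element of $S=T\ps{t}$, it is $t$-homogeneous of degree $d$ by inspection of its coefficients, and by the estimate above $F_d\in\mm_S^{|d|}$, so $\sum_{d\in\Z}F_d$ converges in $S$. To see that it equals $F$ and that the decomposition is unique I would use that, for each fixed pair $(i,j)$, the coefficient map $G\mapsto G_{(i,j)}$ is $\mm_S$-adically continuous (if $G\in\mm_S^N$ with $N\geq i+j$ then $G_{(i,j)}\in\mm_A^{N-i-j}P_j$), hence commutes with the convergent series involved; since only the summand with $d=j-i$ contributes to the $(i,j)$-coefficient, $\sum_dF_d$ and $F$ have the same coefficients, so they agree by uniqueness of the expansion, and likewise any two $t$-homogeneous decompositions of $F$ must coincide termwise.

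For the equivalence, $(2)\Rightarrow(1)$ is immediate: $J$ is an ideal of the Noetherian local ring $S$, hence closed in the $\mm_S$-adic topology, and the partial sums of $\sum_dF_d$ lie in $J$. For $(1)\Rightarrow(2)$, since $S$ is Noetherian I would first extract from a $t$-homogeneous generating set of $J$ a finite one, say $g_1,\ldots,g_r$ of $t$-degrees $e_1,\ldots,e_r$, and write $F=\sum_{k=1}^r h_kg_k$. Decomposing each $h_k=\sum_d(h_k)_d$ and using that a product of $t$-homogeneous elements is $t$-homogeneous with degrees added (a coefficient check, with $S$ a domain so the product is nonzero), the family $\{(h_k)_dg_k\}_{k,d}$ is summable in $S$ — each term lies in $\mm_S^{|d|}$ and there are finitely many $k$ — so I may rearrange and group by total degree:
\[
F=\sum_{k=1}^r h_kg_k=\sum_{k=1}^r\sum_{d\in\Z}(h_k)_dg_k=\sum_{e\in\Z}\Big(\sum_{k=1}^r (h_k)_{e-e_k}g_k\Big),
\]
where each inner sum is $t$-homogeneous of degree $e$ (or $0$) and visibly lies in $J$. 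By the uniqueness established in the previous step, these inner sums are exactly the $F_e$, so $F_e\in J$ for every $e$.

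The step I expect to demand the most care is the bookkeeping with infinite sums in the $\mm_S$-adic topology: convergence of $\sum_{d\in\Z}F_d$, which genuinely forces the use of the maximal-ideal-adic topology rather than the $t$-adic one (since $\sum_{d\geq 0}F_d$ need not converge $t$-adically); continuity of coefficient extraction so that it commutes with such sums; and summability of $\{(h_k)_dg_k\}$ so that the reindexing $d\mapsto e=d+e_k$ is legitimate. Once the estimate ``$t$-homogeneous of degree $d$ implies membership in $\mm_S^{|d|}$'' is in place, each of these becomes routine.
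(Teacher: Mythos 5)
Your proof is correct and takes essentially the same route as the paper: the decomposition $F_d=\sum_i F_{(i,i+d)}t^i$, the rearrangement of $F=\sum_k h_k g_k$ into $t$-homogeneous pieces for $(1)\Rightarrow(2)$, and the Cauchy-sequence/closed-ideal argument for $(2)\Rightarrow(1)$ all match. One small simplification: the paper works with the $(x_1,\ldots,x_n,t)$-adic topology rather than the full $\mm_S$-adic one, which already suffices since $F_d\in(x_1,\ldots,x_n,t)^{|d|}$ and ideals of $S$ are closed for that topology; invoking $\mm_A$ is unnecessary.
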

\begin{proof}
For the first part, write $F=\sum_{i \in \N }t^i \sum_{j \in \N} F_{(i,j)}$ with $F_{(i,j)}$ in $P$ homogeneous of degree $j$. 
For all $d \in \ZZ$ we set $F_d=\sum_{i \in \N} t^i F_{(i,i+d)}$, which is a $t$-homogeneous element of degree $d$ of $T$. The fact that the writing $F=\sum_{d\in\Z}F_d$ is unique is an easy exercise.

Concerning the last part of the statement, first assume $(1)$, and let $F_1,\ldots ,F_r$ be $t$-homogeneous generators of $J$ of degrees $d_1,\ldots,d_r \in \Z$. If $F\in J$ write
\[F=\sum_{i=1}^rG_iF_i, \ \ \ \text{ with } G_i\in T.\]
By the previous part each $G_i$ can be written as $G_i=\sum_{d\in\Z}G_{i,d}$ where each $G_{i,d}$ is $t$-homogeneous of degree $d$. It follows from the uniqueness of the writing that
\[
F_d=\sum_{i=1}^rG_{i,d-d_i}F_i \ \ \ \text{ for all } d \in \Z.
\]
In particular, $F_d\in J$ for all $d\in\Z$.

For the converse implication, assume that $F_d\in J$ for all $d\in\Z$. Then $G_d=\sum_{-d\leq j \leq d}F_j$ belongs to $J$ for all $d\in\N$. Notice that $(G_d)_{d\in\N}$ is a Cauchy sequence with respect to the $(x_1,\ldots ,x_n,t)$-adic topology on $T$, convergent to $F$. Since any ideal of $T$ is complete with respect to such topology, we conclude that $F\in J$.
\end{proof}

\begin{lemma} \label{l:nzd} Let $S$ and $T=S\ps{t}$ be as above, and $I \subseteq S$ be a proper ideal. Then $t$ is a non-zero divisor on $T/\homo(I)$.
\end{lemma}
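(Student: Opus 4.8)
The plan is to prove the equivalent statement that $\homo(I)$ is \emph{saturated with respect to $t$}, i.e. that $tF\in\homo(I)$ forces $F\in\homo(I)$. The key ingredient is a partial ``dehomogenization'' map playing the role that substituting the extra variable by $1$ plays in classical Gr\"obner theory; since $S$ is a power series ring, and hence local, this map will only make sense on $t$-homogeneous elements, which is why Lemma \ref{l:homogeneous general} is needed to set things up. First I would reduce to the case where $F$ is $t$-homogeneous: writing $F=\sum_e F_e$ with $F_e$ the $t$-homogeneous components, one has $(tF)_d=tF_{d+1}$, so Lemma \ref{l:homogeneous general} applied to $tF\in\homo(I)$ gives $tF_e\in\homo(I)$ for every $e$, and a second application of Lemma \ref{l:homogeneous general} shows it is enough to check $F_e\in\homo(I)$ for each $e$. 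So assume $F$ is $t$-homogeneous of degree $e$ and nonzero; then $tF$ is $t$-homogeneous of degree $e-1$.

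Since $S$ is Noetherian, $\homo(I)$ is generated by finitely many of the $t$-homogeneous elements $\homo(f_1),\dots,\homo(f_r)$ with $f_i\in I$; the argument used to prove Lemma \ref{l:homogeneous general} then shows that the membership $tF\in\homo(I)$ can be written as $tF=\sum_{i=1}^r H_i\,\homo(f_i)$ with every $H_i$ \emph{$t$-homogeneous} (necessarily of degree $e-1-m(f_i)$). Now I would introduce the dehomogenization map: for a $t$-homogeneous element $G=\sum_k t^k G_{(k,k+d)}$ of degree $d$ (with $G_{(k,k+d)}$ homogeneous of degree $k+d$ in the $x_i$), set $\delta(G)=\sum_k G_{(k,k+d)}$. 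The $k$-th summand has $\mm$-adic order $k+d\to\infty$, so this series converges in the complete ring $T$ and $\delta(G)\in T$ is well defined; a short Cauchy-product computation shows that $\delta$ is additive on $t$-homogeneous elements of a fixed degree and satisfies $\delta(GG')=\delta(G)\delta(G')$, and clearly $\delta(t)=1$ and $\delta(\homo(f))=f$ for all $0\neq f\in T$. Applying $\delta$ to the displayed identity gives $\delta(F)=\delta(tF)=\sum_{i=1}^r\delta(H_i)\,f_i\in I$.

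Finally comes the rehomogenization step, which closes the loop: I claim $F=t^{\,b}\,\homo(\delta(F))$ with $b=m(\delta(F))-e\geq 0$, which finishes the proof because $\homo(\delta(F))\in\homo(I)$ and $\homo(I)$ is an ideal. Writing $f:=\delta(F)$, its degree-$j$ homogeneous component is $F_{(j-e,j)}$, so $m(f)=e+b$ where $b\geq 0$ is the least index with $F_{(b,b+e)}\neq 0$; substituting this into the definition of $\homo(f)=\sum_{j\geq m(f)}t^{\,j-m(f)}f_j$ and reindexing by $i=j-e$ gives exactly $t^{\,b}\homo(f)=\sum_{i\geq b}t^i F_{(i,i+e)}=F$, the last equality because $F_{(i,i+e)}=0$ for $i<b$; the cases $e\geq 0$ and $e<0$ are handled by the same computation.

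The step I expect to be the main obstacle is the careful construction of $\delta$: unlike in the polynomial setting one cannot specialize $t\mapsto 1$ on an arbitrary element of $S$ (this already fails for $1/(1-t)$), so the reduction to $t$-homogeneous $F$ via Lemma \ref{l:homogeneous general} and the verification that $\delta$ is well defined, additive in each degree, and multiplicative \emph{precisely} on $t$-homogeneous elements are what make the argument legitimate; the rehomogenization identity $F=t^{\,b}\homo(\delta(F))$ is then a short but essential bookkeeping check.
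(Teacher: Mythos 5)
Your proof is correct, and takes a genuinely different route from the paper's. Both arguments start by reducing to $t$-homogeneous $F$ via Lemma \ref{l:homogeneous general} and by writing $tF=\sum_{i}H_i\homo(f_i)$ with each $H_i$ $t$-homogeneous of the appropriate degree. From there the paper works ``one layer at a time'': it peels off the $t$-degree-zero parts $g_{i0}$ of the coefficients, observes that $\sum_i g_{i0}\homo(f_i)$ is $t$-homogeneous and divisible by $t$, hence of the form $t^{s+1}\homo(g)$ for $g=\sum_i g_{i0}f_i\in I$, and substitutes this back to cancel a single factor of $t$ directly. Your version instead packages the key phenomenon --- that substituting $t\mapsto 1$ and then rehomogenizing returns a $t$-homogeneous element up to a power of $t$ --- into a global dehomogenization operator $\delta$, checks once that $\delta$ is a multiplicative section of $\homo$ on $t$-homogeneous elements, and applies $\delta$ to the whole identity at once: $\delta(F)=\sum_i\delta(H_i)f_i\in I$, and then $F=t^b\homo(\delta(F))\in\homo(I)$. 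What the paper's argument buys is brevity: it only needs the rehomogenization fact for one auxiliary element. What yours buys is clarity and reusability: your $\delta$ is precisely the restriction to $t$-homogeneous elements of the map $\Theta:\overline{S}\to T$ that the paper itself introduces in Section \ref{Section main}, and your derivation that $\delta$ carries $\homo(I)$-elements into $I$ is one inclusion of Lemma \ref{l:dehom general}. In effect you have front-loaded machinery the paper develops anyway, and obtained Lemma \ref{l:nzd} as a near-immediate consequence of it.
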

\begin{proof}
Let $F\in T$ be such that $tF\in\homo(I)$, so that $tF=\sum_{i=1}^rG_i\homo(f_i)$ for some $f_i\in I$ and  $G_i\in T$. First, we prove the result further assuming that $F$ is $t$-homogeneous of a given degree $d \in \Z$. Note that, in this case, each $G_i$ is forced to be $t$-homogeneous of degree $d-o(f_i)$. For every $i$ we can thus write $G_i=\sum_{j\in\N}t^j g_{i,j}$, where $g_{i,j}\in P=A[x_1,\ldots,x_n]$ is a homogeneous polynomial of degree $d-o(f_i)+j$. Note that we necessarily have $\sum_{i=1}^rg_{i0}\homo(f_i) \in (t)T$. That is, there must exist $G\in T$ such that
\[\sum_{i=1}^rg_{i0}\homo(f_i)=tG.\]
If $G =0$, then clearly $G \in \homo(I)$. If $G \ne 0$, we write $G=t^sG'$ with $G' \notin (t)T$. If we set $g:=\sum_{i=1}^rg_{i0}f_i$, then $\homo(g)=G' \in \homo(I)$. In particular, we have that $G\in\homo(I)$ in this case as well. Now we can write
\[tF=\sum_{i=1}^r(G_i-g_{i0})\homo(f_i)+tG=t\left(\sum_{i=1}^r\left(\sum_{j\geq 1}g_{ij}t^{j-1}\right)\homo(f_i)+G\right).\]
Since $T$ is a domain, we obtain that $F=\sum_{i=1}^r(\sum_{j\geq 1}g_{ij}t^{j-1})\homo(f_i)+G\in\homo(I)$, as desired.

Now that we have proved the case in which $F$ is $t$-homogeneous, the general result follows from Lemma \ref{l:homogeneous general} and the fact that $\homo(I)$ is a $t$-homogeneous ideal.
\end{proof}

Next we want to show that $\homo(I)$ is a prime ideal of $T$ whenever $I$ is a prime ideal of $S$. The analogous fact for finitely generated $A$-algebras has a simple proof, but this does not immediately translate to our setup. The proof we present is more subtle, and we need two preparation lemmas. The fist one is well-known, the second one is trickier.

\begin{lemma}\label{l:easy general}
For any ideal $I\subseteq S$ there is an isomorphism of $A$-algebras $(S/I)\ps{t} \cong T/IT$.
\end{lemma}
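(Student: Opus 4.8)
The plan is to exhibit the natural coefficientwise reduction map as the desired isomorphism. I would define $\varphi\colon T\ps{t}\to (T/I)\ps{t}$ by sending a power series $\sum_{i\geq 0}a_it^i$ (with $a_i\in T$) to $\sum_{i\geq 0}\overline{a_i}\,t^i$, where $\overline{a_i}$ denotes the image of $a_i$ in $T/I$. It is routine that $\varphi$ is a homomorphism of $A$-algebras: the ring operations on a power series ring are computed coefficientwise and via finite convolution sums, both of which are compatible with the quotient map $T\to T/I$. It is likewise clear that $\varphi$ is surjective, since every power series over $T/I$ lifts termwise to one over $T$. So the only content is the identification $\Ker\varphi=IT\ps{t}$.

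The inclusion $IT\ps{t}\subseteq\Ker\varphi$ is immediate: if $F=\sum_jf_jG_j$ with $f_j\in I$ and $G_j=\sum_ig_{j,i}t^i\in T\ps{t}$, then the $i$-th coefficient of $F$ is $\sum_jf_jg_{j,i}\in I$, hence maps to $0$. For the reverse inclusion, take $F=\sum_{i\geq 0}a_it^i\in\Ker\varphi$, so that $a_i\in I$ for every $i$. Here I would invoke that $T=A\ps{x_1,\ldots,x_n}$ is Noetherian: fix a finite generating set $I=(f_1,\ldots,f_r)$, choose for each $i$ elements $c_{i,1},\ldots,c_{i,r}\in T$ with $a_i=\sum_{k=1}^rc_{i,k}f_k$, and set $G_k:=\sum_{i\geq 0}c_{i,k}t^i\in T\ps{t}$. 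Then $\sum_{k=1}^rG_kf_k=\sum_{i\geq 0}\bigl(\sum_{k=1}^rc_{i,k}f_k\bigr)t^i=\sum_{i\geq 0}a_it^i=F$, so $F\in IT\ps{t}$. The first isomorphism theorem then yields $(T/I)\ps{t}\cong T\ps{t}/IT\ps{t}$ as $A$-algebras.

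If a more structural argument is preferred, one can instead observe that $T\ps{t}$ is the $t$-adic completion of the Noetherian ring $T[t]$, while $(T/I)\ps{t}$ is the $t$-adic completion of $(T/I)[t]\cong T[t]/IT[t]$, and then apply exactness of completion for finitely generated modules over $T[t]$ to the sequence $0\to IT[t]\to T[t]\to (T/I)[t]\to 0$. Either route makes clear that the single nontrivial point is the finite generation of $I$: it is precisely what lets a power series all of whose coefficients lie in $I$ be rewritten as a finite $T\ps{t}$-linear combination of a fixed generating set of $I$. Beyond this there is no genuine obstacle, only bookkeeping; I would expect the write-up to be short.
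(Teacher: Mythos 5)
Your proof is correct, and the core argument differs in flavor from the paper's. Both proofs identify the same kernel, and both reduce to showing that a power series $F=\sum_i a_it^i$ with all $a_i\in I$ actually lies in $IT\ps{t}$. The paper handles this by viewing the partial sums $g_n=\sum_{i\le n}a_it^i$ as a Cauchy sequence in $IT\ps{t}$ and invoking the (nontrivial) fact that ideals of the complete Noetherian local ring $T\ps{t}$ are closed in the $(x_1,\ldots,x_n,t)$-adic topology; your proof instead fixes a finite generating set $f_1,\ldots,f_r$ of $I$, expresses each coefficient $a_i=\sum_k c_{i,k}f_k$, and reassembles the $c_{i,k}$ into power series $G_k\in T\ps{t}$ with $\sum_k G_kf_k=F$. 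Your version is more elementary and self-contained — it uses only Noetherianity of $T$, without appealing to completeness of ideals — at the minor cost of a less immediate bookkeeping step (the non-canonical choice of the $c_{i,k}$). Your alternative observation via exactness of $(t)$-adic completion applied to $0\to IT[t]\to T[t]\to (T/I)[t]\to 0$ is also valid and is closest in spirit to the paper's completeness argument. All three routes are fine; the difference is only where the Noetherian input enters.
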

\begin{proof}
If $\pi:T \to (S/I)\ps{t}$ is the natural projection $\sum_{n\in\N}f_nt^n\mapsto \sum_{n\in\N}\overline{f_n}t^n$, it is clear that $IT\subseteq \Ker(\pi)$. On the other hand, if $\sum_{n\in\N}f_nt^n\in\Ker(\pi)$, then $f_n\in I$ for all $n\in\N$. If $g_n=\sum_{i\leq n}f_it^i$ we thus have that $(g_n)_{n\in \N}$ is a Cauchy sequence in $T$ with entries in $IT$. Since any ideal of $T$ is complete, then the limit of $(g_n)_{n\in\N}$, i.e. $\sum_{n\in\N}f_nt^n$, belongs to $IT$. So $\Ker(\pi)=IT$, and it follows that $\overline{\pi}:T/IT \to (S/I)\ps{t}$ is an isomorphism.
\end{proof}

\begin{lemma}\label{l:tricky general}
Let $I \subseteq S$ be an ideal. If $\mm_S=\mm_AS + (x_1,\ldots,x_n)$ denotes the maximal ideal of $S$, then $R=S/I$ is a domain if and only if the $\mm_S R\ps{t}_t$-adic completion of $R\ps{t}_t = T_t/I T_t$ is a domain.
\end{lemma}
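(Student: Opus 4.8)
The plan is to reduce the statement about the $\mm_T R\ps{t}_t$-adic completion of $R\ps{t}_t$ to a statement about a more tractable completion, and then apply standard facts about how completion interacts with domains. First I would observe that, by Lemma \ref{l:easy general}, we have $R\ps{t} \cong T\ps{t}/IT\ps{t} = S/IS$, and hence $R\ps{t}_t = S_t/IS_t$, so inverting $t$ we are looking at the localization $(R\ps{t})_t$. The key point is to understand the completion at the ideal $\mm_T R\ps{t}_t$. Since $t$ is already invertible, the maximal ideal of the local ring $(R\ps{t})_t$ localized appropriately — or rather, the relevant ideal — is generated by $x_1, \ldots, x_n$ together with the maximal ideal $\mm_A$ of $A$; in other words it is the extension of $\mm_T$.

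The main step is the following claim: the $\mm_T R\ps{t}_t$-adic completion of $R\ps{t}_t$ is isomorphic to $\widehat{R}\pps{t}$, or more precisely to the completion of $R\pps{t} := (R\ps{t})_t$ along the maximal-ideal-like filtration coming from $\mm_T$. One direction of the equivalence is then easy: if $\widehat{R}$ is a domain (equivalently $R$ is a domain, since $R$ is already complete with respect to $\mm_T$, being a quotient of the complete ring $T$), then $R\pps{t}$ is a localization of $R\ps{t} \cong \widehat R \ps t$, hence a domain, and one checks that passing to the $\mm_T$-adic completion of $R\pps{t}$ preserves this — here I would use that the associated graded ring of $R\pps t$ with respect to $\mm_T R\pps t$ is $G_{\mm_T}(R)[t, t^{-1}]$ (Laurent polynomials over the associated graded of $R$), which is a domain when $R$ is, and that a Noetherian ring whose associated graded ring with respect to an ideal is a domain has domain completion. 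For the converse: if the completion is a domain, then $R\pps t$ injects into it (the filtration is separated because $\bigcap_n \mm_T^n R\pps t = 0$, which follows from Krull's intersection theorem since $\mm_T R \pps t$ is contained in the Jacobson radical after localizing — this needs a small argument), so $R\pps t$ is a domain, hence $R\ps t = \widehat R \ps t$ is a domain (it embeds into its localization at $t$), and therefore $\widehat R$ is a domain, so $R$ is.

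The hard part will be handling the completion carefully: $R\pps t = (R\ps t)_t$ is \emph{not} local (it has the ``fiber'' directions coming from $t$), so $\mm_T R \pps t$ is not a maximal ideal, and one must be careful that the $\mm_T$-adic completion behaves well. I expect the cleanest route is to identify $\widehat{R\pps t}^{\,\mm_T}$ with $\widehat{R}\ps t_t$ completed again — essentially showing that completing $\widehat R \ps t$ at $(x_1,\dots,x_n,\mm_A)$ and then inverting $t$, versus inverting $t$ first and then completing, give the same ring — and then invoke that for a Noetherian local domain $(\widehat R, \mm_T)$ the ring $\widehat R\ps t$ is a domain, its localization at $t$ is a domain, and its further $\mm_T$-adic completion is a domain because $\mathrm{gr}_{\mm_T}$ of it is $G(\widehat R)\ps{t}_t$-graded-ish and reduced-domain considerations apply. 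A subtle point to verify is that the associated graded ring computation really does produce a \emph{domain} and not merely a reduced ring; if only reducedness is available one still concludes that the completion is reduced, but for the domain statement one needs that $\mm_T R\pps t$-adic completion of a domain with domain associated graded ring is a domain, which is the standard fact $\mathrm{gr}_I(\widehat A) \cong \mathrm{gr}_I(A)$ together with: $\mathrm{gr}_I(A)$ domain $\Rightarrow$ $A$ is a domain once $A$ is $I$-adically separated.
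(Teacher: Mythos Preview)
Your approach has a genuine gap in the hard direction (from $R$ a domain to the completion being a domain). You propose to use that the associated graded ring of $R\ps{t}_t$ with respect to $\mm_T$ is $\mathrm{gr}_{\mm_T}(R)[t,t^{-1}]$, and that this is a domain when $R$ is. But this is false: the associated graded ring $G=\mathrm{gr}_{\mm_T}(R)$ of a complete local domain is in general \emph{not} a domain, and need not even be reduced. For instance $R=k\ps{x,y}/(y^2-x^3)$ is a domain with $G=k[x,y]/(y^2)$. So neither the step ``$\mathrm{gr}$ a domain $\Rightarrow$ completion a domain'' nor the fallback ``$\mathrm{gr}$ reduced $\Rightarrow$ completion reduced'' is available. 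You flag this as a ``subtle point to verify'' at the end, but it is not a technicality: it is exactly the obstruction that makes the lemma nontrivial, and it simply fails in general. (Indeed, the whole thrust of the paper is that the passage from $R$ to $G$ is delicate.)

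The paper's proof takes a completely different route for this direction: it passes to the normalization $\overline{R}$ of $R$. Since $R$ is a complete local domain, $\overline{R}$ is a \emph{normal} local domain, finite over $R$ by excellence. Then $\overline{R}\ps{t}$ is a normal local domain, and the $\mm'$-adic completion of $\overline{R}\ps{t}_t$ coincides with its completion at a maximal ideal (because $\sqrt{\mm_T\overline{R}}=\overline{\mm}$), hence is again a normal domain. Finally, flatness of $R'\to\widehat{R'}$ gives an injection $\widehat{R'}\hookrightarrow\widehat{\overline{R}\ps{t}_t}$, so $\widehat{R'}$ sits inside a domain. The key idea you are missing is to replace $R$ by a ring whose completion you \emph{can} control---here a normal ring, whose local completions stay normal---rather than trying to control $\mathrm{gr}_{\mm_T}(R)$ directly.
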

\begin{proof}
If the $\mm_S R\ps{t}_t$-adic completion $\widehat{R\ps{t}_t}$ is a domain, then $R$ is a domain as well since the map $R \to  \widehat{R\ps{t}_t}$ is faithfully flat, hence injective.


Suppose now that $R$ is a domain. Denote $R\ps{t}_t$ by $R'$, $\mm_S R' = \mm_S\ps{t}_t$ by $\mm'$ and the $\mm'$-adic completion by $\widehat{(-)}$ as above. Let $\overline{R}$ be the normalization of $R$. Since $R$ is a complete local domain, $\overline{R}$ is local \cite[Lemma 15.106.2]{Stacks}, say with maximal ideal $\overline{\mm}$. In particular, the ring $\overline{R}\ps{t}$ is a normal local domain, with maximal ideal $\overline{\mm}\ps{t}$. Moreover, since $R$ is complete, hence excellent, $\overline{R}\ps{t}_t$ is a finitely generated $R'$-module. Now, note the ideal $\mm'$ is maximal in $R'$ since $R'/\mm' \cong \kappa_A\ps{t}_t = \kappa_A \pps{t}$ is a field. As the map $R' \to \widehat{R'}$ is flat, if we tensor the inclusion $R' \hookrightarrow \overline{R}\ps{t}_t$ with $\widehat{R'}$ we still have an injective map
\[
\widehat{R'} \hookrightarrow \overline{R}\ps{t}_t \otimes_{R'} \widehat{R'} \cong \widehat{\overline{R}\ps{t}_t}.
\]
It is left to show that the ring on the right-hand-side is in fact the completion of $\overline{R}\ps{t}_t$ at one of its maximal ideals. As such, it will be a normal domain and, in particular, its subring $\widehat{R'}$ will be a domain as well.

In order to prove the first claim we will show that the $\mm' \overline{R}\ps{t}_t$-topology is equivalent to the $\overline{\mm}\ps{t}_t$-topology; note that the latter is a maximal ideal since $\overline{R}\ps{t}_t/\overline{\mm}\ps{t}_t \cong (\overline{R}/\overline{\mm})\pps{t}$ is a field. To this end, just recall once again that the map $R \to \overline{R}$ is finite and $\overline{R}$ is local. It follows that $\sqrt{\mm_S \overline{R}} = \overline{\mm}$, and therefore $\sqrt{\mm_S \overline{R}\ps{t}_t} = \sqrt{\mm' \overline{R}\ps{t}_t} = \overline{\mm}\ps{t}_t$.
\end{proof}

\begin{proposition} \label{p:iso}
Let $S$ and $T=S\ps{t}$ be as above, and let $S'=A'\ps{x_1,\ldots,x_n}$ with $A'=\widehat{A\pps{t}}^{\mm_A\pps{t}}$. Then $S'$ is the completion of $T_t$ at the ideal $\nn=\mm_AT_t + (x_1,\ldots,x_n)T_t$. Moreover, there is an automorphism of $A'$-algebras $\gamma:S' \to S'$ such that $\gamma(\homo(I)S') = IS'$ for any ideal $I \subseteq S$.
\end{proposition}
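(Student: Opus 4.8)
The plan is to build the automorphism $\gamma$ by unravelling the substitution $x_i \mapsto x_i t$ that defines $\psi$, and then pass to the completion $T'$ to make the substitution invertible. First I would check the preliminary claim that $T'=A'\ps{x_1,\dots,x_n}$ is the $\nn$-adic completion of $S_t$. Since $S_t = A\ps{x_1,\dots,x_n}\ps{t}_t$ and $\nn = \mm_A S_t + (x_1,\dots,x_n)S_t$, completing at $\nn$ first completes the coefficient ring $A\ps{t}_t = A\pps{t}$ at $\mm_A\pps{t}$, giving $A'$, and then completes in the variables $x_1,\dots,x_n$; the point is that $t$ is already a unit and lies outside $\nn$, so nothing happens to $t$ beyond it becoming a unit in $A'$. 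Interchanging the order of completion (coefficients vs.\ variables) is the routine part, justified because $A[x_1,\dots,x_n]\ps{t}_t$ is Noetherian and completion commutes appropriately.

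Next I would construct $\gamma$. In $S = T\ps{t}$ we have $\psi(x_i) = x_i t$, and for $f\in T$, $\homo(f) = t^{-m(f)}\psi(f)$. Passing to $S_t$, where $t$ is invertible, the map $\psi$ extends to an $A$-algebra endomorphism of $S_t$ sending $x_i\mapsto x_i t$; since $t$ is a unit this is actually an automorphism of $S_t$ with inverse $x_i \mapsto x_i t^{-1}$, $t\mapsto t$. The subtlety is that this automorphism of $S_t$ does \emph{not} preserve the ideal $\nn$ (it sends $x_i$ into $x_i t \notin \nn$ in general... actually $x_i t \in \nn$ since $x_i \in \nn$; but the preimage $x_i t^{-1}$ need not lie in any power of $\nn$), so it need not extend continuously to the completion. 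To fix this I would instead work with the $A'$-algebra structure: over $A' = \widehat{A\pps{t}}$, the element $t$ is a unit, so $x_i\mapsto x_i t$ defines a continuous $A'$-algebra automorphism $\gamma$ of $T' = A'\ps{x_1,\dots,x_n}$ with continuous inverse $x_i \mapsto x_i t^{-1}$ (both $t, t^{-1}\in A'$, so these substitutions converge $\nn_{T'}$-adically). Concretely $\gamma\big(\sum_\alpha a_\alpha x^\alpha\big) = \sum_\alpha a_\alpha t^{|\alpha|} x^\alpha$.

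Finally I would verify $\gamma(\homo(I)T') = IT'$. For a single $0\ne f = \sum_\alpha a_\alpha x^\alpha \in T$ with $m := m(f)$, one has $\psi(f) = \sum_\alpha a_\alpha t^{|\alpha|} x^\alpha$, so $\homo(f) = t^{-m}\psi(f) = \sum_\alpha a_\alpha t^{|\alpha|-m} x^\alpha$ (all exponents $|\alpha|-m \geq 0$). Then $\gamma(\homo(f)) = \sum_\alpha a_\alpha t^{|\alpha|-m} t^{|\alpha|} x^\alpha = t^{-m}\sum_\alpha a_\alpha t^{2|\alpha|} x^\alpha$; hmm, that is not quite $f$. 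I would instead define $\gamma$ via $x_i\mapsto x_i t^{-1}$: then $\gamma(\homo(f)) = \sum_\alpha a_\alpha t^{|\alpha|-m} t^{-|\alpha|}x^\alpha = t^{-m} f$, which up to the unit $t^{-m}\in A'$ is exactly $f$. Hence $\gamma(\homo(f)T') = f T'$ for every $0\ne f\in I$, and since $\homo(I)T'$ is generated by the $\homo(f)$ while $IT'$ is generated by the $f$, we get $\gamma(\homo(I)T') = IT'$. One should also note $\gamma$ is well-defined and an automorphism: it is the $A'$-algebra map determined by $x_i\mapsto t^{-1}x_i$, which is $\nn_{T'}$-adically continuous since $t^{-1}x_i\in \nn_{T'}$, and is invertible with inverse $x_i\mapsto t x_i$.

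The main obstacle is the interchange-of-completions claim, i.e.\ identifying $T'$ with $\widehat{S_t}^{\nn}$: one must carefully track that completing $A[x_1,\dots,x_n]$-style rings in the coefficient variable $t^{\pm1}$ versus the ring variables $x_i$ can be done in either order, and that inverting $t$ (which is harmless since $t\notin\nn$) commutes with $\nn$-adic completion. Everything after that — defining $\gamma$, checking continuity and invertibility, and the one-line computation $\gamma(\homo(f)) = t^{-m(f)} f$ — is straightforward bookkeeping.
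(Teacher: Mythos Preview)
Your construction of $\gamma$ via $x_i\mapsto x_i t^{-1}$ and the verification $\gamma(\homo(f))=t^{-m(f)}f$ are exactly what the paper does; that part is correct and identical.

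The difference lies in the identification $\widehat{S_t}^{\,\nn}\cong T'=A'\ps{x_1,\dots,x_n}$. You try to argue by ``interchanging completions'' and you correctly flag this as the sticking point. It is indeed delicate: for instance $S_t=(A\ps{t}\ps{x_1,\dots,x_n})_t$ is \emph{not} $A\pps{t}\ps{x_1,\dots,x_n}$ before completing (the pole orders in $t$ are uniformly bounded in the former but not the latter), so the phrase ``completing at $\nn$ first completes the coefficient ring'' hides real work. The paper sidesteps this entirely: it writes $A=V\ps{y_1,\dots,y_r}$ via Cohen's structure theorem, notes that $\nn$ is maximal in $S_t$ with residue field $\kappa_A\pps{t}$, and then applies Cohen's structure theorem again (together with \cite[Corollary to Theorem 29.2]{MatsumuraRing} in mixed characteristic) to present $\widehat{S_\nn}$ as a quotient of $V'\ps{y_1,\dots,y_r,x_1,\dots,x_n}$; a dimension count forces the quotient to be trivial, and one checks $V'\ps{y_1,\dots,y_r}\cong A'$. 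This is shorter and completely avoids the limit-interchange issue.

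Your route can be made rigorous (e.g.\ by checking that the natural map $A\pps{t}[x_1,\dots,x_n]\to S_t$ induces isomorphisms modulo $\nn^k$ for every $k$), but as written it is a sketch at the one genuinely nontrivial step, whereas the paper's Cohen-based argument is complete.
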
 
\begin{proof}
By Cohen's structure theorem, we can write $A=V\ps{y_1,\ldots,y_r}$, where $V$ is either a field isomorphic to $\kappa_A$, or a complete discrete valuation domain with uniformizer $p$ and residue field isomorphic to $\kappa_A$. 
Note that, since $t \notin \nn$, the completion $\widehat{T_t}$ at the maximal ideal $\nn T_t$ is isomorphic to the completion $\widehat{T_\nn}$ of $T_\nn$ at its maximal ideal. 
Since the residue field of $T_\nn$ is isomorphic to $\kappa_A\pps{t}$, by Cohen's structure theorem, and invoking \cite[Corollary to Theorem 29.2]{MatsumuraRing} in the case of mixed characteristic, we conclude that $\widehat{T_\nn} \cong V'\ps{y_1,\ldots,y_r,x_1,\ldots,x_n}/J$ for some ideal $J$, and $V'=\widehat{V\pps{t}}^{(p)}$ the $p$-adic completion of $V\pps{t}$. Finally, by dimension considerations, we have that $J=0$ and thus $\widehat{T_\nn} \cong V'\ps{y_1,\ldots,y_r}\ps{x_1,\ldots,x_n}$. Finally, note that $A' \cong V'\ps{y_1,\ldots,y_r}$, again by Cohen's structure Theorem, and using \cite[Corollary to Theorem 29.2]{MatsumuraRing} in mixed characteristic. This shows the first claim.

Now consider the map $\gamma:S' \to S'$ of complete local $A'$-algebra defined by $x_i \mapsto x_i/t$. Since $t$ is a unit it is an automorphism, and by construction it is such that 
\[
\gamma(\homo(f)) = \gamma\left(t^{-o(f)}\psi(f)\right) = t^{-o(f)} \gamma(\psi(f)) = t^{-o(f)}f
\]
for all $0\ne f \in S$. In particular, if $f_1,\ldots,f_u$ are generators of an ideal $I$ such that $\homo(I) = (\homo(f_1),\ldots,\homo(f_u))$, then
\[
\gamma(\homo(I)S') = \left(\gamma(\homo(f_1)),\ldots,\gamma(\homo(f_u))\right)S' = \left(f_1,\ldots,f_u\right)S' = IS'. \qedhere
\]
\end{proof}

\begin{proposition}\label{p:prime general}
If $I\subseteq S=A\ps{x_1,\ldots,x_n}$ is a prime ideal, then $\homo(I)$ is a prime ideal of $T=S\ps{t}$.
\end{proposition}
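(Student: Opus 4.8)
The plan is to realize $S/\homo(I)$ as a subring of a domain. The natural candidate for that domain is $T'/\homo(I)T'$, where $T'=\widehat{S_t}^{\nn}$ is the complete regular local ring of Proposition~\ref{p:iso}. Indeed, the automorphism $\gamma$ of that proposition carries $\homo(I)T'$ to $IT'$, so $T'/\homo(I)T'\cong T'/IT'$; and since $\mm_T S_t=\nn$, the ring $T'/IT'$ is precisely the $\mm_T R\ps t_t$-adic completion of $R\ps t_t=S_t/IS_t$, which by Lemma~\ref{l:tricky general} is a domain because $R=T/I$ is a domain (as $I$ is prime). So everything comes down to producing an injection $S/\homo(I)\hookrightarrow T'/\homo(I)T'$, i.e.\ to proving the ideal identity $\homo(I)T'\cap S=\homo(I)$.

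To reduce this identity to something tractable, set $\qq:=\mm_T S=\mm_A S+(x_1,\ldots,x_n)S$. One checks easily that $\qq$ is a prime of $S$ with $S/\qq\cong\kappa_A\ps t$, hence of dimension one, and that $t\notin\qq$; moreover $S_\qq=(S_t)_{\nn}$ and $T'=\widehat{S_\qq}$. Since $S$ is a domain and $S_\qq$ is Noetherian local, $S\hookrightarrow S_\qq\hookrightarrow T'$, and by faithful flatness of completion $\homo(I)T'\cap S_\qq=\homo(I)S_\qq$. Therefore the identity $\homo(I)T'\cap S=\homo(I)$ is equivalent to $\homo(I)S_\qq\cap S=\homo(I)$, that is, to the assertion that no element of $S\setminus\qq$ is a zerodivisor on $S/\homo(I)$ — equivalently, that \emph{every associated prime of $\homo(I)$ in $S$ is contained in $\qq$}. (Lemma~\ref{l:nzd} already tells us $t$ lies in no associated prime of $\homo(I)$, but here we need all of them to sit inside $\qq$.)

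The step I expect to be the main obstacle is exactly this last assertion, and I would base it on the (delicate) fact that every associated prime $\mathfrak a$ of the $t$-homogeneous ideal $\homo(I)$ is itself $t$-homogeneous. Granting that, the conclusion is formal: $t\notin\mathfrak a$ by Lemma~\ref{l:nzd}; if $\mathfrak a\not\subseteq\qq$ then, because $\qq$ is $t$-homogeneous and the only prime of $S$ properly containing $\qq$ is the maximal ideal $\M$ of $S$, we get $\sqrt{\mathfrak a+\qq}=\M$, hence $t^{N}\in\mathfrak a+\qq$ for some $N\geq1$. Taking $t$-homogeneous components of degree $-N$ (legitimate by Lemma~\ref{l:homogeneous general}, as $\mathfrak a$ and $\qq$ are $t$-homogeneous), we may write $t^{N}=p+q$ with $p\in\mathfrak a$ and $q\in\qq$ both $t$-homogeneous of degree $-N$. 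Every $t$-homogeneous element of negative degree $-N$ has the form $t^{N}h$ with $h$ of $t$-degree $0$, so $p=t^{N}h_1$ and $q=t^{N}h_2$; cancelling $t^{N}$ (valid since $S$ is a domain) gives $1=h_1+h_2$ with $h_1\in\mathfrak a$ and $h_2\in\qq$ (using that $t$ lies in neither of these primes), contradicting $\mathfrak a+\qq\subseteq\M$. Hence $\mathfrak a\subseteq\qq$.

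It remains to argue the delicate input, that associated primes of $\homo(I)$ are $t$-homogeneous. Here I would adapt the classical graded proof: reduce an associated prime $\mathfrak a=(\homo(I):g)$ to a colon ideal by a $t$-homogeneous element, and then show that $\mathfrak a^{*}$, the largest $t$-homogeneous ideal contained in $\mathfrak a$, is prime and therefore equals $\mathfrak a$. The genuine difficulty — the reason the proof is more subtle than in the classical Gr\"obner setting — is that $S$ is only a \emph{completed} graded ring: the $t$-degrees occurring in a general element of $S$ are not bounded below, so the usual ``pick the lowest-degree component'' manoeuvre must be replaced by an approximation argument in the $\M$-adic topology. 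The point making this work is that the part of any element of $S$ of $t$-degree $\leq -k$ lies in $\M^{k}$, so finite truncations converge and the graded reasoning can be carried out modulo $\M^{k}$ and then passed to the limit, with Lemma~\ref{l:homogeneous general} used throughout to move between elements of $t$-homogeneous ideals and their components.
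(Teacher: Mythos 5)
Your overall strategy coincides with the paper's: both pass through the localization $S_t$ and the complete local ring $T'=\widehat{S_\nn}$, use Proposition~\ref{p:iso} to transport $\homo(I)T'$ to $IT'$, invoke Lemma~\ref{l:tricky general} to see $T'/IT'$ is a domain, and finish by descending to $S$ using Lemma~\ref{l:nzd}. The paper's version of the descent is the terse assertion that ``$\homo(I)S_t$ is prime because its completion $\homo(I)T'$ at $\nn$ is prime,'' followed by the observation that $t$ is a nonzerodivisor modulo $\homo(I)$. You are right that, read literally, the only formal content of that assertion is that $\homo(I)(S_t)_\nn$ is prime (by faithful flatness of $(S_t)_\nn\to T'$), and that to pass from there to the primality of $\homo(I)S_t$ (equivalently, of $\homo(I)$, given Lemma~\ref{l:nzd}) one needs precisely the contraction $\homo(I)T'\cap S=\homo(I)$, i.e.\ that no element of $S\setminus\qq$ is a zerodivisor on $S/\homo(I)$. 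Your reduction of that contraction to the two statements ``every $t$-homogeneous prime of $S$ not containing $t$ is contained in $\qq$'' and ``every associated prime of $\homo(I)$ is $t$-homogeneous'' is correct, and your proof of the first of these two statements (via taking the degree-$(-N)$ component of $t^N=p+q$ and cancelling $t^N$) is complete and correct.

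The genuine gap is the second statement, which you only sketch. You write that one should adapt the classical argument that associated primes of graded modules are graded, and you explicitly flag that the ``pick the lowest-degree component'' manoeuvre does not apply because an element of $S=T\ps{t}$ can have $t$-homogeneous components in unboundedly negative degree. That diagnosis is exactly right, and it is not a small technicality: the classical proof that an associated prime $\aaa=\Ann(m)$ is graded proceeds by induction on the (finite!) number of homogeneous components of $m$, replacing $m$ by $am$ to strip off one component at a time, and this induction has no base here. Your suggested fix — carry out the argument modulo $\M^k$ and pass to the limit — is plausible in spirit but is not written down, and it is genuinely where all the difficulty of this proposition lives; without it the proof does not close. (Separately, note that ``show $\aaa^*$ is prime and therefore equals $\aaa$'' is not by itself a valid step: $\aaa^*$ being prime only gives a $t$-homogeneous prime \emph{contained in} $\aaa$; one still needs an argument, as in the classical finite-component induction, to conclude $\aaa^*=\aaa$.) So the proposal correctly reconstructs the scaffolding of the paper's proof and correctly isolates the delicate point, but leaves the delicate point unproved.
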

\begin{proof}
Lemma \ref{l:easy general} implies that $I$ is prime if and only if $IT$ is prime. Any localization of a prime ideal at a disjoint multiplicative system is prime, hence $IT_t$ is prime. By Proposition \ref{p:iso} the completion of $T_t$ at the ideal $\nn=\mm_A T_t + (x_1,\ldots,x_n)T_t$ is isomorphic to $S'=A'\ps{x_1,\ldots ,x_n}$, where $A'=\widehat{A\pps{t}}^{\mm_A}$. Again by Proposition \ref{p:iso} we have an isomorphism $S'/IS'\cong S'/\homo(I)S'$ of $A'$-algebras, and since $IS'$ is prime by Lemma \ref{l:tricky general}, we conclude that $\homo(I)S'$ is prime as well. Finally, $\homo(I)$ is prime because the map $T/\homo(I) \to S'/\homo(I)S'$ is faithfully flat, hence injective. 
\end{proof}

\subsection{Rings of equal characteristic}
Let $(R,\mm)$ be a complete local ring containing a field, and $G=\bigoplus_{i\in\N}\mm^i/\mm^{i+1}$ be its associated graded ring. We let $k \subseteq R$ be a coefficient field of $R$. We recall the general construction introduced above, and specialize it to this case: by Cohen's structure theorem, we can write $R\cong S/I$ where $S=k\ps{x_1,\ldots ,x_n}$ is the formal powers series ring in $n = \dim_{k}(\mm/\mm^2)$ variables over the field $k\cong R/\mm$ and $I\subseteq (x_1,\ldots,x_n)^2$. We can write any element $0 \ne f\in S$ uniquely as $f=\sum_{i\in \N}f_i$ where $f_i\in P=k[x_1,\ldots,x_n]$ is a homogeneous polynomial of degree $i$. We let $\init(f):=f_{o(f)}$, where $o(f)= \min\{i \in \N \mid f \in \mm^i\}= \min\{i\in\N \mid f_i\neq 0\}$. With this notation, if $\init(I)=(\init(f) \mid f\in I)\subseteq P$ then we have $G\cong P/\init(I)$. Note that, if $G_+$ denotes the homogeneous maximal ideal of $G$, then $\widehat{G}:= \widehat{G}^{G_+}$ is naturally an $S$-module.

The ideal $\homo(I)\subseteq T=S\ps{t}$ introduced before is, in this context of $S=A\ps{x_1,\ldots,x_n}$ with $A=k$, generated by all elements of the form
\[\homo(f)=\sum_{i\geq o(f)}t^{i-o(f)}f_i\]
with $0 \ne f=\sum_{i\geq o(f)}f_i\in I$.

\begin{theorem}\label{t:deform equichar}
We have that $\homo(R):=T/\homo(I)$ is a flat $k\ps{t}$-algebra with special fiber $\homo(R)/(t)\cong \widehat{G}$ and generic fiber $\widehat{\homo(R)\otimes_{k\ps{t}} k\pps{t}}\cong \widehat{R\otimes_kk\pps{t}}$, where in both cases the completion is taken with respect to the extension of $\mm = (x_1,\ldots,x_n)$.
\end{theorem}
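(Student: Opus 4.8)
The plan is to check the three claims — flatness over $k\ps{t}$, the shape of the special fiber, and the shape of the generic fiber — one at a time, using Lemma~\ref{l:nzd}, Proposition~\ref{p:iso}, and the explicit form $\homo(f)=\sum_{i\ge m(f)}t^{i-m(f)}f_i$ of the generators of $\homo(I)$.

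For flatness, I would first note that since $I\subseteq(x_1,\ldots,x_n)^2$ is proper, Lemma~\ref{l:nzd} says $t$ is a non-zero divisor on $\homo(R)=S/\homo(I)$; in particular $t\notin\homo(I)$, so $k\ps{t}\hookrightarrow S\twoheadrightarrow\homo(R)$ is injective and exhibits $\homo(R)$ as a $k\ps{t}$-algebra. Because $k\ps{t}$ is a discrete valuation ring with maximal ideal $(t)$, flatness over it is equivalent to torsion-freeness, which is precisely the statement that $t$ is a non-zero divisor on $\homo(R)$. Hence $\homo(R)$ is $k\ps{t}$-flat, its special fiber is $\homo(R)/(t)$ and its generic fiber is $\homo(R)\otimes_{k\ps{t}}k\pps{t}=\homo(R)_t$.

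For the special fiber, I would use that the reduction map $S=T\ps{t}\to S/(t)S$ identifies $S/(t)S$ with $T$ and sends $\homo(f)$ to $f_{m(f)}=\init(f)$ for every $0\ne f\in I$; since $\homo(I)$ is generated by the $\homo(f)$'s, its image in $T$ is $\init(I)T$, so $\homo(R)/(t)\cong T/\init(I)T$. On the other hand $G\cong P/\init(I)$ with $P=k[x_1,\ldots,x_n]$, the $(x_1,\ldots,x_n)$-adic completion of $P$ is $T$, and since completion is exact on finitely generated $P$-modules, $\widehat{G}=\widehat{G}^{G_+}\cong T/\init(I)T$, giving $\homo(R)/(t)\cong\widehat{G}$.

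The generic fiber is where the real work lies. Here I would invoke Proposition~\ref{p:iso} in the equal characteristic case ($A=k$, hence $A'=k\pps{t}$ and $T'=k\pps{t}\ps{x_1,\ldots,x_n}$): it says $T'$ is the completion of $S_t$ at $\nn=\mm S_t$ and provides an automorphism $\gamma$ of $T'$ with $\gamma(\homo(I)T')=IT'$. Since $S_t/\homo(I)S_t$ is Noetherian, completing it with respect to $\mm$ gives
\[
\widehat{\homo(R)\otimes_{k\ps{t}}k\pps{t}}\cong T'/\homo(I)T'\xrightarrow{\ \gamma\ }T'/IT'.
\]
On the other side, $R\otimes_k k\pps{t}$ need not be Noetherian, so its $\mm$-adic completion has to be computed as $\varprojlim_j(R\otimes_k k\pps{t})/\mm^j$; flat base change along $k\to k\pps{t}$ gives $(R\otimes_k k\pps{t})/\mm^j\cong(R/\mm^j)\otimes_k k\pps{t}$, and since $R/\mm^j=T/(I+\mm^j)$ is a finite-dimensional $k$-algebra, the natural map $T\otimes_k k\pps{t}\to T'$ induces an isomorphism $(R/\mm^j)\otimes_k k\pps{t}\cong T'/(IT'+\mm^jT')$ (a surjection of $k\pps{t}$-vector spaces of equal finite dimension). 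Taking the limit over $j$ and using that $T'/IT'$ is $\mm$-adically complete gives $\widehat{R\otimes_k k\pps{t}}\cong T'/IT'$, which matches the previous display. I expect this last part — keeping track of which completion is which, and moving from $\homo(I)$ to $I$ via $\gamma$ after inverting $t$ and completing — to be the only genuine obstacle; the flatness and special-fiber statements are essentially formal consequences of Lemma~\ref{l:nzd} and the explicit description of $\homo(f)$.
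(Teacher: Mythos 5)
Your proof is correct and follows essentially the same route as the paper's: flatness from Lemma~\ref{l:nzd} via the DVR criterion, the special fiber via evaluation at $t=0$ identifying $\homo(f)\mapsto\init(f)$, and the generic fiber via Proposition~\ref{p:iso} (identifying $T'=k\pps{t}\ps{x_1,\ldots,x_n}$ as the relevant completion and using the automorphism $\gamma$). The one place you diverge slightly is in identifying $\widehat{R\otimes_k k\pps{t}}$ with $T'/IT'$: the paper routes this through $\widehat{T\pps{t}/IT\pps{t}}=\widehat{S_t/IS_t}$ and implicitly identifies that with $\widehat{R\otimes_k k\pps{t}}$, whereas you compute the completion of the non-Noetherian ring $R\otimes_k k\pps{t}$ directly as $\varprojlim_j (R/\mm^j)\otimes_k k\pps{t}$ and match it to $T'/IT'$ via the finite-dimensionality of $R/\mm^j$; this is a more explicit justification of the same step and, if anything, fills a small gap the paper leaves to the reader.
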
 
\begin{proof}
Since $k\ps{t}$ is a discrete valuation ring, to prove that $\homo(R)$ is flat over $k\ps{t}$ it is enough to see that $\ann_{\homo(R)}(t)=0$. 
But this is an immediate consequence of Lemma \ref{l:nzd}. Now, to show the isomorphisms, first consider the evaluation map $\phi:T\to S$ sending $t$ to $0$. One has that $\Ker(\phi)=(t)$ and $\phi(\homo(I))= \init(I)S$. To see the latter, observe that for any $0 \ne f \in S$ one has
\[
\phi(\homo(f))=  \left(\sum_{i \geq o(f)} f_it^{i-o(f)}\right) _{|t=0} = f_{o(f)} = \init(f) \in S,
\]
and therefore the equality is clear. Thus, we have $T/(\homo(I)+(t))\cong S/\init(I)S \cong \widehat{G}$. For the last isomorphism, if we let $S'=k\pps{t}\ps{x_1,\ldots,x_n}$, then by Proposition \ref{p:iso} we have that $S'/\homo(I)S' \cong S'/IS'$ as $k\pps{t}$-algebras. Using again Proposition \ref{p:iso}, we conclude that 
\begin{align*}
\widehat{\homo(R)\otimes_{k\ps{t}} k\pps{t}}^{(x_1,\ldots,x_n)}& \cong \widehat{\frac{T_t}{\homo(I)_t}}^{(x_1,\ldots,x_n)} \cong \frac{S'}{\homo(I)S'} \\
& \cong \frac{S'}{IS'} \cong \widehat{\frac{S\pps{t}}{IS\pps{t}}}^{(x_1,\ldots,x_n)} \cong \widehat{R \otimes_k k\pps{t}}^{(x_1,\ldots,x_n)}. \qedhere
\end{align*}
\end{proof}

\subsection{Rings of mixed characteristic} 
Let $(R,\mm)$ be a complete local ring of mixed characteristic $(\alpha, p)$, with $\alpha \in \{0\} \cup \{p^n \mid n > 1\}$, and let $G$ be its associated graded ring, with completion $\widehat{G}$ at its irrelevant maximal ideal $G_+$. By Cohen's structure theorem there exists an unramified complete discrete valuation ring $(V,pV)$ of mixed characteristic $(0,p)$ and an ideal $I \subseteq  V\ps{x_2,\ldots,x_n}=:Q$ such that $R \cong Q/I$. Note that we are not excluding the case in which $R$ is ramified, as we are not assuming that $I \subseteq \mm_Q^2$. We let $S=Q\ps{x_1} \cong V\ps{x_1,\ldots,x_n}$, and we consider the surjective map of complete $Q$-algebras $\pi: S \to Q$ such that $x_1 \mapsto p$. It is clear that $(x_1 - p) = \ker(\pi)$, so that $Q \cong S/(x_1-p)$. 

Given an ideal $I \subseteq Q$, we let
\[
\whomo(I) = \homo\left(\pi^{-1}(I)\right) \subseteq T=S\ps{t},
\]
where $\hom(-)$ is the one previously defined and applied to the context of $S=A\ps{x_1,\ldots,x_n}$ with $A=V$.
\begin{remark} \label{r:p,t}
Observe that $x_1-p \in \pi^{-1}(I)$, and thus $x_1t-p \in \whomo(I)$ for any ideal $I \subseteq Q$.
\end{remark}

Now let $G$ be the associated graded ring of $R$. By definition, it is a standard graded $R/\mm=k$-algebra, which we can write as $G\cong P/\init(I)$ where $P=k[x_1,x_2,\ldots,x_n]$ and $\init(I)$ is a homogeneous ideal. It is worth pointing out how the initial ideal $\init(I)$ is obtained in this context or, in other words, how given $0 \ne f \in Q=V\ps{x_2,\ldots,x_n}$ one constructs a homogeneous polynomial $\init(f) \in P$, the initial form of $f$.

Since $V$ is an unramified complete discrete valuation domain with uniformizer $p$, and residue field $k=V/(p)$, we can choose a function $[-]: k \to V$, which we call a lift, such that if $\rho:V \to k$ is the natural projection, one has $\rho \circ [-] = 1_k$. In other words, for every $u \in k \cong V/(p)$ we make a choice of a representative $[u]\in V$ which coincides with $u$ modulo $(p)$. Note that when $k$ is perfect, there is a unique such map which is also multiplicative and every element of $\Im([-])$ is a $p^n$ power for every $n$. Such a map is often called a Teichmuller lift  (\cite{Teichmuller,Cohen}). 

We can extend any lift $[-]$ to an injective function $\iota: k\ps{x_1,\ldots,x_n} \to V\ps{x_1,\ldots,x_n} = S$ defined as 
\[
\iota \left(\sum_{\underline{a}} u_{\underline{a}} x_1^{a_1} \cdots x_n^{a_n}\right) = \sum_{\underline{a}} [u_{\underline{a}}] x_1^{a_1} \cdots x_n^{a_n}.
\]
If $\rho: S \to k\ps{x_1,\ldots,x_n}$ still denotes the natural projection, we have $\rho \circ \iota = 1_{k\ps{x_1,\ldots,x_n}}$.
Note that $\iota$ is both additive and multiplicative modulo $(p)$, that is, $\iota(f+g) - (\iota(f)+\iota(g)) \in (p)S$ and $\iota(fg)-\iota(f)\iota(g) \in (p)S$. Both of these properties follow at once from the fact that $\rho$ is a ring homomorphism with $\ker(\rho) = (p)S$, and that $\rho \circ \iota$ is the identity map.

Now given an element $f \in Q$, we can write it uniquely as $\sum_{\underline{a} \in \N^{n-1}} \lambda_{\underline{a}} x_2^{a_2} \cdots x_n^{a_n}$ for $\lambda_{\underline{a}} \in V$. Moreover, for every $\underline{a}$ we can write $\lambda_{\underline{a}} = \sum_{j \in \N} \lambda_{\underline{a},j}p^j$ for an element $(\lambda_{\underline{a},j})_{j \in \N} \in \prod_\N \Im([-])$ which is unique once a lift $[-]$ as above is fixed. Since any such lift $[-]$ is injective, we can equivalently say that we can write each $\lambda_{\underline{a}} = \sum_{j\in \N} [u_{\underline{a},j}]p^j$ for a unique element $(u_{\underline{a},j})_{j \in \N} \in \prod_\N k$, this time independent of the chosen lift $[-]$. In other words, every $f \in Q$ can be written as $f=\sum_{\underline{a}\in \N^n} [u_{\underline{a}}] p^{a_1}x_2^{a_2}\cdots x_n^{a_n}$ for a unique choice of elements $(u_{\underline{a}})_{\underline{a}\in \N^{n}} \in \prod_{\N^n}k$. We now let $v(f) = \max\{j \mid f \in (p,x_2,\ldots,x_n)^j\}$, so that we can write $f=\sum_{|\underline{a}|=v(f)} [u_{\underline{a}}] p^{a_1}x_2^{a_2}\cdots x_n^{a_n} + \sum_{|\underline{b}|>v(f)} [u_{\underline{b}}] p^{b_1}x_2^{b_2}\cdots x_n^{b_n}$. With this notation, we have that $\init(f) = \sum_{|\underline{a} |=v(f)} u_{\underline{a}} x_1^{a_1}x_2^{a_2} \cdots x_n^{a_n} \in P_{v(f)}$.

\begin{remark} Note that the writing $\init(f) = \sum_{|\underline{a} |=v(f)} u_{\underline{a}} x_1^{a_1}x_2^{a_2} \cdots x_n^{a_n} \in P_{v(f)}$ is unique, while $f=\sum_{|\underline{a}|=v(f)} [u_{\underline{a}}] p^{a_1}x_2^{a_2}\cdots x_n^{a_n} + \sum_{|\underline{b}|>v(f)} [u_{\underline{b}}] p^{b_1}x_2^{b_2}\cdots x_n^{b_n}$ is unique up to the choice of $[-]:k \hookrightarrow V$.
\end{remark}

\begin{remark}
The above writing argument can be adapted to any element $f$ of $S$, rather than just of $Q$. In other words, given any $f \in S$, there exists unique elements $v_{\underline{c}} \in k$ for $\underline{c} \in \N^{n+1}$ such that 
\[
f=\sum_{\underline{c} \in \N^{n+1}} [v_{\underline{c}}]p^{c_0}x_1^{c_1} \cdots x_n^{c_n}.
\]
\end{remark}
Given $f \in S$ written as above, we let 
\[
\bar{f} := \sum_{\underline{c} \in \N^{n+1}} [v_{\underline{c}}] x_1^{c_0+c_1}x_2^{c_2} \cdots x_n^{c_n} \in S.
\]
Note that this construction will, in general, depend on the chosen lift $[-]$. However, since $f-\bar{f} \in (p-x_1) = \ker(\pi)$, given any ideal $I \subseteq Q$, one has that $f \in \pi^{-1}(I)$ if and only if $\bar{f} \in \pi^{-1}(I)$. It follows easily from the definitions that there exists $N\geq 0$ such that $\homo(f)-t^N\homo(\bar{f}) \in (p-x_1t)$.
\begin{theorem} \label{t:deform mixed}
Set $\homo(R):=T/\whomo(I)$. Then $t$ is a non-zero divisor on $\homo(R)$, and $\homo(R)/(t)\cong \widehat{G}$. Moreover, we have $\widehat{\homo(R)\otimes_{V\ps{t}} V\pps{t}}\cong \widehat{R\otimes_V V\pps{t}}$, where on the left-hand side completion is taken with respect to the extension of $(p,x_1,\ldots,x_n)$, and on the right-hand side with respect to the extension of $\mm = (p,x_2,\ldots,x_n)$.
\end{theorem}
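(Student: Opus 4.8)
The plan is to establish the three assertions following the template of Theorem~\ref{t:deform equichar}, while keeping careful track of the two filtrations involved: the filtration on $T=V\ps{x_1,\ldots,x_n}$ induced by $\mm=(x_1,\ldots,x_n)$ with respect to which $\homo(-)$ was defined (so every element of $V$ has degree $0$), and the $\mm_Q$-adic filtration on $Q=V\ps{x_2,\ldots,x_n}$, where $\mm_Q=(p,x_2,\ldots,x_n)$. That $t$ is a non-zero divisor on $\homo(R)=T\ps{t}/\homo(\pi^{-1}(I))$ follows at once from Lemma~\ref{l:nzd}, applied to the ideal $\pi^{-1}(I)\subseteq T$, which is proper because $R\cong T/\pi^{-1}(I)\neq 0$.

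For the special fiber I would use the evaluation map $\phi\colon T\ps{t}\to T$, $t\mapsto 0$, which is surjective with kernel $(t)$, so that $\homo(R)/(t)\cong T/\phi(\whomo(I))$. Exactly as in the equicharacteristic case, where $\phi(\homo(f))=\init(f)$, one has $\phi(\whomo(I))=\init_\mm(\pi^{-1}(I))\,T$, the ideal of $T$ generated by the $\mm$-initial forms $\init_\mm(g)\in\operatorname{gr}_\mm(T)=V[x_1,\ldots,x_n]$ of the nonzero $g\in\pi^{-1}(I)$. To identify this ideal I would invoke the elementary fact that, for a ring surjection $q\colon T\twoheadrightarrow B$ and an ideal $\aaa\subseteq T$ with $\bigcap_{j}\aaa^{j}=0$ (which holds for our Noetherian local rings), setting $\aaa_B=q(\aaa)$, the canonical graded surjection $\operatorname{gr}_\aaa(T)\twoheadrightarrow\operatorname{gr}_{\aaa_B}(B)$ has kernel the ideal generated by $\{\init_\aaa(g):0\neq g\in\Ker q\}$. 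Applied to $\pi\colon T\to Q$ (note $\pi(\mm)=\mm_Q$): since $\Ker\pi=(x_1-p)$, $\init_\mm(x_1-p)=-p$, and $\operatorname{gr}_\mm(T)$ is a domain, the initial ideal of $\Ker\pi$ is $(p)$, recovering $\operatorname{gr}_{\mm_Q}(Q)\cong V[x_1,\ldots,x_n]/(p)=P$. Applied to the composite $T\xrightarrow{\pi}Q\twoheadrightarrow R=Q/I$, together with the functoriality of $\operatorname{gr}(-)$, the ideal $\init_\mm(\pi^{-1}(I))$ is the full preimage, under the reduction $V[x_1,\ldots,x_n]\twoheadrightarrow P$, of $\init_{\mm_Q}(I)=\init(I)$; in particular it contains $p$ and reduces to $\init(I)$ modulo $p$. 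Hence $\phi(\whomo(I))\supseteq(p)$ and
\[
\homo(R)/(t)\cong T/\phi(\whomo(I))\cong k\ps{x_1,\ldots,x_n}/\init(I)\,k\ps{x_1,\ldots,x_n}\cong\widehat{P/\init(I)}=\widehat{G}.
\]

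For the generic fiber I would argue as in the final part of Theorem~\ref{t:deform equichar}, using Proposition~\ref{p:iso} with $A=V$. Since $t$ is a non-zero divisor, $\homo(R)\otimes_{V\ps{t}}V\pps{t}\cong S_t/\whomo(I)_t$ with $S=T\ps{t}$; completing at $\nn=(p,x_1,\ldots,x_n)S_t$ gives, by Proposition~\ref{p:iso}, $T'/\whomo(I)T'$, where $T'=A'\ps{x_1,\ldots,x_n}$ and $A'=\widehat{V\pps{t}}^{(p)}$. The automorphism $\gamma$ of Proposition~\ref{p:iso}, applied to the ideal $\pi^{-1}(I)$, yields $\gamma(\whomo(I)T')=\gamma(\homo(\pi^{-1}(I))T')=\pi^{-1}(I)T'$, hence an isomorphism of $A'$-algebras $T'/\whomo(I)T'\cong T'/\pi^{-1}(I)T'$. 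Since $x_1-p\in\pi^{-1}(I)$ and $T'/(x_1-p)\cong A'\ps{x_2,\ldots,x_n}=:Q'$, this last quotient equals $Q'/IQ'$; and $Q'/IQ'\cong\widehat{R\otimes_V V\pps{t}}^{(p,x_2,\ldots,x_n)}$, since $\widehat{Q\otimes_V V\pps{t}}^{(p,x_2,\ldots,x_n)}\cong Q'$ by the same computation used in the equicharacteristic case. Composing these isomorphisms proves the last assertion.

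The step I expect to be the main obstacle is the exact identification $\phi(\whomo(I))=\init_\mm(\pi^{-1}(I))\,T$ together with the claim that this ideal contains $p$ and reduces to $\init(I)$ modulo $p$. In contrast with the equicharacteristic situation, here $\mm$ is not the maximal ideal of $T$, and the way $p$ enters the picture --- as the degree-zero initial form $\init_\mm(x_1-p)=-p$, which is precisely what forces the special fiber to be a $k$-algebra --- must be tracked with care. The hands-on alternative to the functoriality argument is to use the congruence $\homo(f)-t^{N}\homo(\bar f)\in(x_1t-p)$ recorded before the statement, together with Remark~\ref{r:p,t}; in either approach one must be scrupulous about which filtration each initial form refers to.
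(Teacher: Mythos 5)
Your proof is correct, and the treatment of the generic fiber is identical to the paper's (Proposition \ref{p:iso} applied with $A=V$, then killing $x_1-p$ after the automorphism). The special fiber argument, however, takes a genuinely different route. The paper identifies $\phi(\whomo(I))$ by an explicit element-by-element computation: it fixes a set-theoretic lift $[-]\colon k\to V$, introduces the auxiliary map $\iota$ and the ``straightened'' representatives $\bar f$, reduces to $f=\bar f$ via $\homo(f)-t^N\homo(\bar f)\in(p-x_1t)$, and then tracks the coefficients $[u_{\underline a}]$ by hand to arrive at $\phi(\whomo(I))=(\iota(\init(I)),p)T$. You instead observe that $\phi(\homo(g))$ is the $\mm$-initial form of $g$ in $\operatorname{gr}_\mm(T)=V[x_1,\ldots,x_n]$, so $\phi(\whomo(I))=\init_\mm(\pi^{-1}(I))T$, and you then invoke the standard fact that for a filtered surjection with the induced filtration on the target, the kernel of the induced map on associated graded rings is precisely the ideal of initial forms of the kernel. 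Applied twice (to $\pi$ and to the composite $T\to R$), and using that $\operatorname{gr}_\mm(T)$ is a domain to see that $\init_\mm(x_1-p)T$ generates the initial ideal of $(x_1-p)$, this identifies $\init_\mm(\pi^{-1}(I))$ as the full preimage of $\init(I)$ under $V[x_1,\ldots,x_n]\twoheadrightarrow P$; in particular it contains $p$ and reduces to $\init(I)$ modulo $p$, from which the isomorphism $\homo(R)/(t)\cong\widehat G$ follows. Your version is more conceptual and avoids the bookkeeping with Teichm\"uller-type lifts entirely; the paper's hands-on computation has the mild advantage of producing the concrete generating set $(\iota(\init(I)),p)$, which makes the ideal explicit. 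Both establish the same identity — indeed $(\iota(\init(I)),p)V[x_1,\ldots,x_n]=\init_\mm(\pi^{-1}(I))$ since each is the unique ideal of $V[x_1,\ldots,x_n]$ containing $p$ and reducing to $\init(I)$ mod $p$. Your note at the end, that one could alternatively run the paper's explicit reduction to $\bar f$, is accurate and is precisely what the paper does.
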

\begin{proof}
The fact that $t$ is a non-zero divisor on $\homo(R)$ is a direct consequence of Lemma \ref{l:nzd}. Consider the $S$-algebra map $\phi:T \to S$ such that $t \mapsto 0$. We claim that $\phi(\whomo(I)) = (\iota(\init(I)),p)S$, so that
\[
\frac{\homo(R)}{(t)} \cong \frac{T}{\whomo(I)+(t)} \cong \frac{S}{(\iota(\init(I)),p)S} \cong \frac{k\ps{x_1,\ldots,x_n}}{\init(I)k\ps{x_1,\ldots,x_n}} \cong \widehat{G},
\]
with $k\cong R/\mm$. To see the claim, recall that $\whomo(I)$ is defined as $\hom(\pi^{-1}(I))$, where $\pi:S \to Q$ is the map sending $x_1 \mapsto p$. Let $f \in S$ be such that $\pi(f) \in I$. 
Since $\homo(f)-t^N\homo(\bar{f}) \in (p-x_1t)$ for some $N \geq 0$, it suffices to show that $\phi(\homo(\bar{f})) \in (\init(I),p)S$. Thus, without loss of generality, we may assume that $f=\bar{f}$. Let $o(f) = \sup\{j \mid f \in (x_1,x_2,\ldots,x_n)^j\}$, and $v(f) = \sup\{j \mid f \in (p,x_1,x_2,\ldots,x_n)^j\}$. Note that our assumptions guarantee that $o(f) = v(f)$. We can then write 
\[
f= \sum_{|\underline{a}|=o(f)} [u_{\underline{a}}] x_1^{a_1} x_2^{a_2}\cdots x_n^{a_n} + \sum_{|\underline{b}|>o(f)} [u_{\underline{b}}] x_1^{b_1} x_2^{b_2}\cdots x_n^{b_n},
\]
so that
\[
\homo(f) = \sum_{|\underline{a}|=o(f)} [u_{\underline{a}}] x_1^{a_1} x_2^{a_2}\cdots x_n^{a_n} + \sum_{|\underline{b}|>o(f)} [u_{\underline{b}}] x_1^{b_1} x_2^{b_2}\cdots x_n^{b_n} t^{|\underline{b}|-o(f)}\]
and $\phi(\homo(f)) = \sum_{|\underline{a}|=o(f)} [u_{\underline{a}}] x_1^{a_1} x_2^{a_2}\cdots x_n^{a_n}$. On the other hand, from the fact that $o(f)=v(f)$, we deduce that $\sum_{|\underline{b}|>o(f)} [u_{\underline{b}}] x_1^{b_1} x_2^{b_2}\cdots x_n^{b_n} \in (p,x_1,x_2,\ldots,x_n)^{v(f)+1}$. We therefore have that $\pi(f) \equiv \sum_{|\underline{a}|=o(f)} [u_{\underline{a}}] p^{a_1} x_2^{a_2}\cdots x_n^{a_n} \bmod (p,x_2,\ldots,x_n)^{v(f)+1}$, and thus 
\[
\init(\pi(f)) = \sum_{|\underline{a}|=o(f)} u_{\underline{a}} x_1^{a_1} x_2^{a_2}\cdots x_n^{a_n}.
\]
This gives $\phi(\homo(f)) = \iota(\init(\pi(f))) \in (\iota(\init(I)),p)S$.

Conversely, observe that $p \in \phi(\whomo(I))$ since $p-x_1t \in \whomo(I)$. Now consider an element $f\in \iota(\init(I))$. Since $\iota$ is additive and multiplicative modulo $(p)$, it suffices to show that $f \in \phi(\overline{\homo}(I))$ for any $\iota(f)$, where $f$ is a homogeneous generator of $\init(I)$ of some degree $v$. In other words, we can assume that $\iota(f)=\sum_{|\underline{a}|=v} [u_{\underline{a}}] x_1^{a_{1}} \cdots x_n^{a_{n}} = \iota(\init(g))$ for some $g \in I$ such that $g \in \nn^v \smallsetminus \nn^{v+1}$, where $\nn=(p,x_2,\ldots,x_n)$. As we have
\[
\iota(\init(g)) = \iota\left(\sum_{|\underline{a}|=v} u_{\underline{a}} x_1^{a_1} \cdots x_n^{a_n}\right) = \sum_{|\underline{a}|=v} [u_{\underline{a}}] x_1^{a_1} \cdots x_n^{a_n}=f,
\]
this  implies that $g = \sum_{|\underline{a}|=v} [u_{\underline{a}}] p^{a_1}x_2^{a_2} \cdots x_n^{a_n} +h$ for some $h \in \nn^{v+1}$. Since $g \in I$, it follows that $\bar{g}=\sum_{|\underline{a}|=v} [u_{\underline{a}}] x_1^{a_1}x_2^{a_2} \cdots x_n^{a_n} +\bar{h} \in \pi^{-1}(I)$, and to prove the claimed equality it thus suffices to show that $\phi(\homo(\bar{g})) = f$. Note that, since $h \in \mm^{v+1}$, we must have $\bar{h} \in (x_1,x_2,\ldots,x_n)^{v+1}$. In particular, $o(\bar{h})>o(\bar{g})$, so that $\homo(\bar{g}) = \sum_{|\underline{a}|=v} [u_{\underline{a}}] x_1^{a_1} \cdots x_n^{a_n} + th'$ for some $h' \in T$. The claim now follows.

For the second isomorphism, if we let $V'=\widehat{V\pps{t}}^{(p)}$ and $S'=V'\ps{x_1,\ldots,x_n}$, then by Proposition \ref{p:iso} we have that $S'/\whomo(I)S' \cong S'/\pi^{-1}(I)S'$ as $V'$-algebras, and it follows that
\begin{align*}
\widehat{\homo(R) \otimes_{V\ps{t}} V\pps{t}}^{(p,x_1,\ldots,x_n)} & \cong \widehat{\frac{T_t}{\whomo(I)_t}}^{(p,x_1,\ldots,x_n)} \cong \frac{S'}{\whomo(I)S'} \cong \frac{S'}{\pi^{-1}(I)S'} \\
& \cong \frac{V'\ps{x_2,\ldots,x_n}}{IV'\ps{x_2,\ldots,x_n}} \cong \widehat{\frac{Q\pps{t}}{IQ\pps{t}}}^{(p,x_2,\ldots,x_n)} \cong \widehat{R \otimes_V V\pps{t}}^{(p,x_2,\ldots,x_n)}. \qedhere
\end{align*}
\end{proof}

\begin{remark} Note that, contrary to the case of equal characteristic, the extension $V\ps{t} \to \homo(R)$ is not flat. In fact, note that multiplication by $p$ is injective on $V\ps{t}/(t) \cong V$, but it becomes the zero map when we apply $-\otimes_{V\ps{t}} \homo(R)$ because of Remark \ref{r:p,t}.
\end{remark}

\begin{remark}\label{r:agree} If $R=Q/I$ with $Q=k\ps{x_2,\ldots,x_n}$ and $k$ a field of characteristic $p>0$, one could in principle repeat the construction performed in mixed characteristic, and view $R$ as a quotient of $S=V\ps{x_1,\ldots,x_n}/(x_1-p)$ with $V$ an unramified complete DVR. However, if $\pi:S \to Q$ denotes the map $x_1 \mapsto p$, then since $0=p \in I$ one gets that $x_1 \in \pi^{-1}(I)$, and thus $(p,x_1) \subseteq \whomo(I)$. It follows that $T/\whomo(I) \cong Q\ps{t}/\homo(I)$ gives the same construction as the one described in equal characteristic.
\end{remark}

\section{Subdimensions and connectedness}\label{sconnsub}
Throughout this section, $A$ denotes a Noetherian ring, not necessarily local.

\begin{definition}
The {\it subdimension} of $A$ is defined as $\gsdim(A)=\min\{\dim(A/\pp) \mid \pp\in\Min(A)\}$. Similarly, the {\it algebraic subdimension} of $A$ is $\asdim(A)=\min\{\dim(A/\pp) \mid \pp\in\Ass_A(A)\}$. We say that $A$ is {\it equidimensional} if $\gsdim(A) = \dim(A)$.
\end{definition}

\begin{remark}
By definition, we have $\asdim(A)\leq \gsdim(A)$. Moreover, equality holds if $A$ has no embedded associated primes (e.g., if $A$ is reduced). On the other hand, note that $\dim(A)=\max\{\dim(A/\pp) \mid \pp\in\Min(A)\}=\max\{\dim(A/\pp) \mid \pp\in\Ass_A(A)\}$.
\end{remark}

\begin{definition} \label{d:connectedness}
Let $A$ be a Noetherian ring of finite Krull dimension $d$. For every $s\in\N$, we say that $A$ is {\it connected in codimension $s$} if $\Spec(A)\smallsetminus Z$ is connected for all closed subsets $Z \subseteq \Spec(A)$ with $\dim(Z)<d-s$ (we use the convention that $\emptyset$ is disconnected of dimension $-\infty$).
\end{definition}



\begin{remark} \label{remark codim VS dim}
If $Z$ is a closed subset of $\Spec(A)$, the condition $\dim(Z)<\dim(A)-s$ is sometimes replaced with $\codim(Z) > s$, c.f. \cite{HartCICo}. The two definitions coincide if $\height(I)+\dim(A/I)=\dim(A)$ for any ideal $I\subseteq A$, e.g., if $A$ is catenary and equidimensional. Our definition adapts better to use the following connectedness result due to Grothendieck \cite[Expos\'e XIII, Th\'eor\`eme 2.1]{SGA2}.
\end{remark}

\begin{theorem}\label{t:grosdim}
Let $(A,\mm)$ be a Noetherian complete local ring, and $x\in\mm$ an element which is not in any minimal prime of $A$. Then 
\begin{enumerate}
\item $\gsdim(A/(x))\geq \gsdim(A)-1$.
\item If $A$ is connected in codimension $s>0$, then so is $A/(x)$.
\end{enumerate}
\end{theorem}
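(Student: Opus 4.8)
The plan is to prove the two parts separately: statement~(1) is a direct dimension count, while statement~(2) is essentially a reformulation of Grothendieck's connectedness theorem [SGA2, Exposé~XIII, Théorème~2.1] once our Definition~\ref{d:connectedness} is matched with the topological notion of connectedness used there (cf.\ Remark~\ref{remark codim VS dim}).

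For~(1), I would pick a minimal prime $\pp$ of $A/(x)$, i.e.\ a prime of $A$ minimal over $(x)$. Since $x\in\pp$ while $x$ lies in no minimal prime of $A$, necessarily $\pp\notin\Min(A)$, so there is $\qq\in\Min(A)$ with $\qq\subsetneq\pp$. As $A$ is complete, $A/\qq$ is a complete local domain, the image $\bar x$ of $x$ in it is nonzero, and $\pp/\qq$ is minimal over $(\bar x)$; hence $\height(\pp/\qq)=1$ by Krull's principal ideal theorem. Because a complete local domain is catenary and equidimensional, the dimension formula $\dim(A/\qq)=\height(\pp/\qq)+\dim(A/\pp)$ holds, so $\dim(A/\pp)=\dim(A/\qq)-1\ge\gsdim(A)-1$. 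Taking the minimum over all such $\pp$ gives $\gsdim(A/(x))\ge\gsdim(A)-1$.

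For~(2), put $d=\dim A$; the statement is vacuous if $d=0$ (then $\mm$ is a minimal prime and no admissible $x$ exists), so assume $d\ge 1$. The first, elementary, observation is that since $x$ lies in no minimal prime of $A$ one has $\dim(A/(x))=d-1$. The second, more substantial, step is the dictionary for Definition~\ref{d:connectedness}: for a Noetherian ring $C$ of dimension $e$, a disconnection of $\Gamma_s(C)$ is the same as a partition $\Min(C)=P_1\sqcup P_2$ into nonempty subsets with $\dim(C/(\pp+\qq))<e-s$ for all $\pp\in P_1,\ \qq\in P_2$; putting $Y_i=\bigcup_{\pp\in P_i}V(\pp)$ this is the same as a decomposition $\Spec(C)=Y_1\cup Y_2$ into proper closed subsets with $\dim(Y_1\cap Y_2)<e-s$, and conversely any such decomposition induces such a partition since each irreducible component of $\Spec(C)$ lies in one of the $Y_i$. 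Hence $C$ is connected in codimension $s$ if and only if $\Spec(C)\smallsetminus W$ is connected for every closed $W$ with $\dim W\le e-s-1$; and, when $s>0$, this is equivalent to saying that $\dim\Spec(C)\ge e-s+1$ and $\Spec(C)\smallsetminus W$ stays connected after deleting any closed $W$ of dimension $\le(e-s+1)-2$. With this in hand, I would invoke Grothendieck's theorem in the form: for $A$ complete local and $x\in\mm$, if $\Spec(A)$ has dimension $\ge n$ and stays connected after deleting closed subsets of dimension $\le n-2$, then $\Spec(A/(x))$ has dimension $\ge n-1$ and stays connected after deleting closed subsets of dimension $\le n-3$. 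Applying this with $n=d-s+1$, and using $\dim(A/(x))=d-1$, the output is precisely the condition characterizing ``$A/(x)$ connected in codimension $s$''.

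The only genuine work here is the index bookkeeping: checking the $\Gamma_s$-versus-topology dictionary carefully, and verifying that under the hypotheses $s>0$ and ``$x$ in no minimal prime'' (which is exactly what forces the dimension to drop by precisely one) the value $n=d-s+1$ is the one that makes Grothendieck's conclusion coincide with connectedness in codimension $s$ for $A/(x)$. All the geometric content is carried by [SGA2, XIII, 2.1], and completeness of $A$ is used there — just as it was used in~(1) to ensure that $A/\qq$ is catenary and equidimensional.
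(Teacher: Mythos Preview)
The paper does not supply its own proof of this theorem: it is stated with the attribution ``the following connectedness result due to Grothendieck [SGA2, Expos\'e~XIII, Th\'eor\`eme~2.1]'' and no argument is given. So there is no paper proof to compare against, only a citation.

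Your treatment of part~(2) is exactly that citation, together with the dictionary between Definition~\ref{d:connectedness} and the topological formulation (which the paper records separately as Proposition~\ref{p:basicconn}); the index bookkeeping with $n=d-s+1$ is correct. For part~(1) you go further than the paper does: rather than subsuming the subdimension inequality under Grothendieck's theorem, you give a self-contained proof using only Krull's principal ideal theorem and the fact that a complete local domain is catenary. This argument is correct --- the key point that $\pp/\qq$ is minimal over $(\bar x)$ in $A/\qq$ follows because any prime strictly between $\qq$ and $\pp$ containing $x$ would contradict minimality of $\pp$ over $(x)$ in $A$ --- and it has the virtue of isolating exactly how much of the hypothesis is needed for the subdimension bound, independent of the deeper connectedness machinery.
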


We now give an equivalent interpretation of being connected in a given codimension $s \in \N$. Let $\Gamma_s(A)$ denote the (finite) simple graph with vertices the minimal prime ideals of $A$, and edges $\{\pp,\qq\}$ whenever $\dim\left(A/(\pp+\qq) \right)\geq d-s$ (we use the convention that the zero ring has dimension $-\infty$).

A proof of the following proposition can be found in \cite[Proposition 2.2.4]{BCRV}.

\begin{proposition}\label{p:basicconn}
Let $A$ be a $d$-dimensional Noetherian ring. For any $s\in\N$ the following are equivalent:
\begin{itemize}
\item[(i)] $A$ is connected in codimension $s$.
\item[(ii)] The graph $\Gamma_s$ is connected. 
\item[(iii)] $\dim\left(\frac{A}{\bigcap_{\pp\in U}\pp+\bigcap_{\pp\in V}\pp}\right)\geq d-s$ for any partition $(U,V)$ of $\Min(A)$ with $U\neq \emptyset\neq V$.
\end{itemize}
\end{proposition}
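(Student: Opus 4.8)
The plan is to prove the cycle of implications $(i)\Leftrightarrow(iii)$, $(i)\Rightarrow(ii)$, and $(ii)\Rightarrow(iii)$. The only computational ingredient is the elementary identity, valid for any partition $(U,V)$ of the finite set $\Min(A)$,
\[
V\!\left(\bigcap_{\pp\in U}\pp+\bigcap_{\qq\in V}\qq\right)=\bigcup_{\substack{\pp\in U\\ \qq\in V}}V(\pp+\qq),
\]
which (abbreviating $\bigcap_U\pp:=\bigcap_{\pp\in U}\pp$ and similarly $\bigcap_V\qq$) yields $\dim\big(A/(\bigcap_U\pp+\bigcap_V\qq)\big)=\max_{\pp\in U,\,\qq\in V}\dim\big(A/(\pp+\qq)\big)$, a maximum over finitely many terms. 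Besides this I will use only that each $V(\pp)$ with $\pp\in\Min(A)$ is irreducible, that a nonempty open subspace of an irreducible space is irreducible and hence connected, and that $\Spec(A)=\bigcup_{\pp\in\Min(A)}V(\pp)$ since every prime contains a minimal prime. I also record that, as $d-s$ is an integer, the edge condition $\dim(A/(\pp+\qq))\geq d-s$ in particular forces $\pp+\qq\neq A$, equivalently $V(\pp)\cap V(\qq)\neq\emptyset$.

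For $(i)\Leftrightarrow(iii)$ I argue directly on $\Gamma_s(A)$. If $\Gamma_s(A)$ is connected, any partition $(U,V)$ of $\Min(A)$ into nonempty parts is joined by an edge $\{\pp,\qq\}$ with $\pp\in U$, $\qq\in V$; the identity then gives $\dim(A/(\bigcap_U\pp+\bigcap_V\qq))\geq\dim(A/(\pp+\qq))\geq d-s$, which is $(iii)$. If instead $\Gamma_s(A)$ is disconnected, take $U$ a connected component and $V$ its complement: no edge joins $U$ and $V$, so $\dim(A/(\bigcap_U\pp+\bigcap_V\qq))=\max_{\pp\in U,\,\qq\in V}\dim(A/(\pp+\qq))<d-s$, contradicting $(iii)$.

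For $(i)\Rightarrow(ii)$, fix a closed $Z\subseteq\Spec(A)$ with $\dim Z<d-s$. Since $\dim A=d>\dim Z$, some $V(\pp)$ is not contained in $Z$, so $\Spec(A)\setminus Z\neq\emptyset$. Given $x,y\in\Spec(A)\setminus Z$, choose $\pp,\qq\in\Min(A)$ with $x\in V(\pp)$ and $y\in V(\qq)$; note $\pp\notin Z$ (because $\pp\subseteq x\notin Z$) and likewise $\qq\notin Z$. Pick a path $\pp=\pp_0,\dots,\pp_m=\qq$ in $\Gamma_s(A)$. For $0<i<m$, the edge condition gives $\dim V(\pp_i)\geq\dim\big(V(\pp_i)\cap V(\pp_{i+1})\big)=\dim(A/(\pp_i+\pp_{i+1}))\geq d-s>\dim Z$, so $V(\pp_i)\not\subseteq Z$; together with $x\in V(\pp_0)\setminus Z$ and $y\in V(\pp_m)\setminus Z$, this shows each $V(\pp_i)\setminus Z$ is a nonempty open subspace of the irreducible space $V(\pp_i)$, hence connected. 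For consecutive indices, $\dim\big(V(\pp_i)\cap V(\pp_{i+1})\big)\geq d-s>\dim Z$ forces $V(\pp_i)\cap V(\pp_{i+1})\not\subseteq Z$, so $(V(\pp_i)\setminus Z)\cap(V(\pp_{i+1})\setminus Z)\neq\emptyset$. Hence $\bigcup_{i=0}^m\big(V(\pp_i)\setminus Z\big)$ is a connected subset of $\Spec(A)\setminus Z$ containing both $x$ and $y$, and $(ii)$ follows.

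For $(ii)\Rightarrow(iii)$, suppose $(iii)$ fails: there is a partition $(U,V)$ of $\Min(A)$ into nonempty parts with $Z:=V(\bigcap_U\pp+\bigcap_V\qq)$ of dimension $<d-s$. Set $Y_1=\bigcup_{\pp\in U}V(\pp)$ and $Y_2=\bigcup_{\qq\in V}V(\qq)$; then $Y_1\cup Y_2=\Spec(A)$, while the identity gives $Y_1\cap Y_2=Z$, so $\Spec(A)\setminus Z=(Y_1\setminus Z)\sqcup(Y_2\setminus Z)$ is a disjoint union of two relatively closed subsets. Both are nonempty: any $\pp\in U$ lies in $Y_1$ but not in $Y_2$, since a minimal prime cannot properly contain another, so $\pp\in Y_1\setminus Z$, and symmetrically for $V$. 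Thus $\Spec(A)\setminus Z$ is disconnected, contradicting $(ii)$. The step I expect to be most delicate is $(i)\Rightarrow(ii)$: one must match points of $\Spec(A)\setminus Z$ to irreducible components, verify that each component along the chosen path and each consecutive intersection genuinely meets $\Spec(A)\setminus Z$ — which is exactly where $\dim Z<d-s$ and the edge inequality $\dim(A/(\pp_i+\pp_{i+1}))\geq d-s$ come into play — and pass from irreducibility to connectedness of the pieces $V(\pp_i)\setminus Z$; the other implications are formal once the displayed identity and the finiteness of $\Min(A)$ are in hand.
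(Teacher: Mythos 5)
Your proof is correct: the identity $V(\bigcap_U\pp+\bigcap_V\qq)=\bigcup_{\pp\in U,\,\qq\in V}V(\pp+\qq)$, the chain-of-irreducibles argument for $(i)\Rightarrow(ii)$, and the bipartition-yields-disconnection argument for $(ii)\Rightarrow(iii)$ are exactly the standard route, and all the edge cases (nonemptiness of $\Spec(A)\setminus Z$, intermediate components of the path meeting the open set, minimal primes not containing one another) are handled. The paper itself does not prove this statement but cites \cite[Proposition 2.2.4]{BCRV}, and your argument is essentially the one given there.
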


We collect some basic facts about the graphs $\Gamma_s$.
\begin{remark}
The graphs $\Gamma_s=\Gamma_s(A)$ form a sequence of graphs on the vertex set $\Min(A)$ such that:
\begin{itemize}
\item[(i)] $\Gamma_0\subseteq \Gamma_1\subseteq \ldots\subseteq \Gamma_d\subseteq \Gamma_{d+1}=\Gamma_{d+2}=\ldots$ (in particular, if $A$ is connected in codimension $s$ then it is connected in codimension $s'$ for all $s' \geq s$).
\item[(ii)] $\Gamma_0$ has only isolated vertices. In particular, $A$ is connected in codimension $0$ if and only if $A/\sqrt{(0)}$ is a domain.
\item[(iii)] $\Gamma_{d+1}$ is the complete graph.
\item[(iv)] If $A$ is local, then $\Gamma_d$ is the complete graph.
\end{itemize}
The graph $\Gamma_1$ is sometimes called the  {\it Hochster-Huneke graph} of $A$ or the {\it dual graph} of $\Spec(A)$.
Besides $\Gamma_1$, the graph $\Gamma_{d-1}$ is of particular interest when $(A,\mm)$ is local, as it detects the connectedness of the {\it punctured spectrum} $\Spec(A)\setminus \{\mm\}$ (see Proposition \ref{p:basicconn}).
\end{remark}

\begin{theorem}\label{t:groconn}
Let $(A,\mm)$ be a Noetherian complete local ring, $x\in\mm$ be an element that is not in any minimal prime of $A$, and $s>0$. If $A$ is connected in codimension $s$, then $A/(x)$ is connected in codimension $s$ as well. If $(x)$ is radical, the converse holds true.
\end{theorem}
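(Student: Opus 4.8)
The plan is to deduce both implications from the corresponding statement of Grothendieck's connectedness result (Theorem \ref{t:grosdim}(2)) together with a careful comparison of the minimal primes of $A$ and of $A/(x)$. For the forward direction, the strategy is to reduce to Theorem \ref{t:grosdim}(2) after replacing $A$ by $A_{\mathrm{red}}=A/\sqrt{(0)}$. Indeed, $\Gamma_s(A)=\Gamma_s(A_{\mathrm{red}})$ by definition, since the graphs only see minimal primes, and similarly connectedness in codimension $s$ of $A/(x)$ only depends on $(A/(x))_{\mathrm{red}}$. Since $x$ lies in no minimal prime of $A$, its image in $A_{\mathrm{red}}$ lies in no minimal prime either, so Theorem \ref{t:grosdim}(2) applies to $A_{\mathrm{red}}$ and gives that $A_{\mathrm{red}}/(x)$ is connected in codimension $s$. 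The only subtlety is that $A_{\mathrm{red}}/(x)$ and $A/(x)$ need not have the same dimension as $A$; but because $x$ avoids all minimal primes of the equidimensional-or-not ring, one still has $\dim(A/(x))\le \dim(A)-1$ on each component, and what matters is that for any two minimal primes $\mathfrak p,\mathfrak q$ of $A/(x)$, the value $\dim\big((A/(x))/(\mathfrak p+\mathfrak q)\big)$ compared against $\dim(A/(x))-s$ is controlled correctly — this is exactly the content packaged into Theorem \ref{t:grosdim}, so I would simply cite it rather than re-prove it.

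For the converse, assume $(x)$ is radical and $A/(x)$ is connected in codimension $s$; we want $A$ connected in codimension $s$. Here I would argue directly with the minimal primes. Let $\mathfrak p_1,\dots,\mathfrak p_r$ be the minimal primes of $A$. Since $x$ is in none of them and $(x)$ is radical, $(x)=\bigcap_{i}(\mathfrak p_i + (x))$... more precisely, the minimal primes of $A/(x)$ are the minimal primes over each $\mathfrak p_i+(x)$, and radicality of $(x)$ forces $V(\mathfrak p_i+(x))$ to behave well. The key point is a bijection (or at least a surjection with connected fibers) between the components of $\Spec(A/(x))$ lying over $V(\mathfrak p_i)$ and... actually the cleanest route: use that $\Spec(A/(x))$ is connected in codimension $s$, hence by Proposition \ref{p:basicconn}(ii) the punctured-type spectra stay connected, and pull this back along the closed immersion $\Spec(A/(x))\hookrightarrow \Spec(A)$. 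One shows that a disconnection of $\Gamma_s(A)$ given by a partition $(U,V)$ of $\Min(A)$ with $\dim(A/(\bigcap_{U}\mathfrak p + \bigcap_{V}\mathfrak p))<\dim(A)-s$ would, after intersecting with $V(x)$ and using that $x$ is a non-zerodivisor-like element avoiding minimal primes, produce a disconnection of $\Gamma_s(A/(x))$, contradicting the hypothesis; the radicality of $(x)$ is what guarantees $\dim\big(A/(x)\big)=\dim(A)-1$ and that intersecting with $V(x)$ drops each relevant dimension by exactly one, preserving the strict inequality $<\dim(A/(x))-s$.

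The step I expect to be the main obstacle is precisely this dimension bookkeeping in the converse: translating a partition-of-minimal-primes disconnection of $A$ into one for $A/(x)$ requires knowing that $x$, being radical and avoiding all minimal primes, meets $V(\bigcap_U\mathfrak p)\cup V(\bigcap_V\mathfrak p)$ and also the ``bridge'' locus $V(\bigcap_U\mathfrak p+\bigcap_V\mathfrak p)$ in a way that uniformly drops dimension by one. If $A$ is not equidimensional or not catenary this needs care; I would handle it by passing to $A_{\mathrm{red}}$ (harmless, as noted) and invoking that in a complete local ring each $A/\mathfrak p_i$ is catenary, so that a generic-enough... no — $x$ is fixed, not generic, but radicality of $(x)$ is a strong enough substitute: it forces $x\notin \mathfrak q^{(2)}$-type behavior and in particular ensures $\dim(A/(x)/\mathfrak P)=\dim(A/\mathfrak q)-1$ for every minimal prime $\mathfrak q\supseteq$ the relevant intersection and every $\mathfrak P$ minimal over $\mathfrak q+(x)$. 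Once this uniform ``codimension-one'' statement is in hand, Proposition \ref{p:basicconn}(iii) gives the result mechanically. An alternative, perhaps slicker, route for the converse is to combine part (1) of Theorem \ref{t:grosdim} with the reducedness: if $(x)$ is radical then $A/(x)$ reduced plus $A/(x)$ connected in codimension $s$ should, via a Mayer--Vietoris / lifting-idempotents argument along $0\to A\xrightarrow{x}A\to A/(x)\to 0$, force the dual graph of $A$ to be connected in codimension $s$; I would try this first and fall back on the explicit bookkeeping only if the idempotent-lifting is not clean enough.
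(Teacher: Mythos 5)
Your forward direction is the same as the paper's: pass to $A_{\mathrm{red}}$ (this does not change $\Gamma_s$) so that $x$ becomes a nonzerodivisor, and cite Grothendieck's connectedness theorem (Theorem~\ref{t:grosdim}(2)). That part is fine.

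For the converse, you correctly locate the framework (Proposition~\ref{p:basicconn}(iii): for a partition $(U,V)$ of $\Min(A)$ with $I=\bigcap_{\pp\in U}\pp$, $J=\bigcap_{\pp\in V}\pp$, one must show $\dim(A/(I+J))\geq d-s$), and you correctly identify the crux as showing that passing to $A/(x)$ drops the dimension of this ``bridge'' by exactly one. But you do not actually supply a mechanism for that step, and the mechanism you gesture at is wrong. What is needed is not the assertion that $\dim(A/(x)/\mathfrak P)=\dim(A/\mathfrak q)-1$ for each minimal $\mathfrak q\supseteq I+J$ and $\mathfrak P$ minimal over $\mathfrak q+(x)$; what is needed is the \emph{prior} fact that $x$ lies in no associated (in particular no minimal) prime of $A/(I+J)$, so that $\dim(A/(I+J))=\dim(A/(I+J+(x)))+1$. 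This is genuinely nontrivial: $x$ avoids $\Min(A)$ and $\Ass(A/I)\cup\Ass(A/J)$, but $\Ass(A/(I+J))$ contains new primes, and a priori $x$ could land in one of them. Radicality of $(x)$ does not give this ``$x\notin\mathfrak q^{(2)}$-type behavior'' directly, nor does a lifting-idempotents argument along $0\to A\xrightarrow{x}A\to A/(x)\to 0$ obviously produce it.

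The paper's actual argument, after reducing to $A$ reduced, is a concrete two-step computation that you are missing. First, radicality of $(x)$ forces the equality
\[
I\cap J+(x)=(I+(x))\cap (J+(x)),
\]
because the right side is trapped between $I\cap J+(x)$ and $\sqrt{I\cap J+(x)}=\sqrt{\sqrt{(0)}+(x)}=\sqrt{(x)}=(x)$, and the left side contains $(x)$. Second, tensor the Mayer--Vietoris short exact sequence $0\to A/(I\cap J)\to A/I\oplus A/J\to A/(I+J)\to 0$ with $A/(x)$: since $x$ is regular on $A/I$ and $A/J$ one has $\Tor_1^A(A/I,A/(x))=\Tor_1^A(A/J,A/(x))=0$, and the displayed equality makes the map $A/(I\cap J+(x))\to A/(I+(x))\oplus A/(J+(x))$ injective, so $\Tor_1^A(A/(I+J),A/(x))=0$, which is exactly the statement that $x$ avoids $\Ass_A(A/(I+J))$. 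Only then does the dimension drop by exactly one, and the rest follows by comparing with the induced partition of $\Min(A/(x))$ and invoking Proposition~\ref{p:basicconn}. So your proposal correctly frames the problem but has a real gap at its center; the equality above together with the Tor-vanishing argument is the missing ingredient.
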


\begin{proof}
All statements are not affected by passing from $A$ to $A/\sqrt{0}$, therefore we may assume that $x$ is regular on $A$. The first part of the statement is an immediate consequence of Grothendieck's connectedness' theorem \cite[Expos\'e XIII, Th\'eor\`eme 2.1]{SGA2}. Concerning the second part, the same argument used to prove \cite[Theorem 2.11]{ALNBRM} works: notice that we do not need the equidimensionality assumption on $A$ because our definition of the graphs $\Gamma_s$ differs from the one of \cite{ALNBRM}. We repeat the argument in our setting for convenience of the reader.

By Proposition \ref{p:basicconn} we need to show that, given a partition $(U,V)$ of $\Min(A)$ with $U\neq \emptyset\neq V$ we have $\dim\left(A/(I+J)\right)\geq d-s$, where $I=\bigcap_{\pp\in U}\pp$ and $J=\bigcap_{\pp\in V}\pp$. To this end, we notice that
\begin{equation}\label{eqrad}
I\cap J+(x)=(I+(x))\cap (J+(x)).
\end{equation}
Indeed, the above equality holds up to radical, because if $f=a_0+a_1x=b_0+b_1x$ with $a_0\in I$, $b_0\in J$, $a_1,b_1\in A$, then $f^2=a_0b_0+(a_0b_1+a_1b_0+a_1b_1x)x\in I\cap J+(x)$. Moreover we have a chain of inclusions:
\[(x)\subseteq I\cap J+(x)\subseteq (I+(x))\cap (J+(x))\subseteq \sqrt{I\cap J+(x)}= \sqrt{\sqrt{(0)}+(x)}=\sqrt{(x)}.\]
Since $(x)=\sqrt{(x)}$, the inclusion $I\cap J+(x)\subseteq (I+(x))\cap (J+(x))$ has to be an equality.

Now we claim that $x\notin \pp$ for any $\pp\in\Ass_A(A/(I+J))$. Notice that, since $x\notin \pp$ for any $\pp\in\Ass_A(A)$ and $\Ass_A(A/I)\cup\Ass_A(A/J)\subseteq \Ass_A(A)$, we have that $x\notin \pp$ for any $\pp\in\Ass_A(A/I)\cup\Ass_A(A/J)$. This is equivalent to saying that $\Tor_1^A(A/I,A/(x))=\Tor_1^A(A/J,A/(x))=0$. But then, tensoring the short exact sequence
\[0\to A/I\cap J\to A/I\oplus A/J\to A/(I+J)\to 0\]
with $A/(x)$ gives rise to the exact sequence
\[0\to \Tor_1^A(A/(I+J),A/(x))\to A/(I\cap J+(x))\to A/(I+(x))\oplus A/(J+(x))\to A/(I+J+(x))\to 0.\]
By equality \eqref{eqrad} we infer that $\Tor_1^A(A/(I+J),A/(x))=0$,  and thus  $x\notin \pp$ for all $\pp\in\Ass_A(A/(I+J))$.

Denoting by $M(\pp)=\Min(\pp+(x))$, we have $\Min(R/(x))=\cup_{\pp\in\Min(A)}\{\qq/(x) \mid \qq\in M(\pp)\}$. Exploiting what we proved so far we can conclude because
\begin{align*}
\dim \left(\frac{A}{I+J} \right)&= \dim \left(\frac{A}{I+J+(x)}\right)+1\geq\dim \left(\frac{A}{\bigcap_{\pp\in U}(\pp+(x))+\bigcap_{\pp\in V}(\pp+(x))}\right)+1\\
&\geq \dim \left(\frac{A}{\bigcap_{\pp\in U}\bigcap_{\qq\in M(\pp)}\qq+\bigcap_{\pp\in V}\bigcap_{\qq\in M(\pp)}\qq}\right)+1\\
& =\dim \left(\frac{A/(x)}{\bigcap_{\pp\in U}\bigcap_{\qq\in M(\pp)}\qq/(x)+\bigcap_{\pp\in V}\bigcap_{\qq\in M(\pp)}\qq/(x)}\right)+1\\
&\geq (d-1)-s+1=d-s.
\end{align*}
where the last inequality follows by Proposition \ref{p:basicconn} because $A/(x)$ is connected in codimension $s$. 
\end{proof}

\begin{remark}
When $s=0$, the first part of Theorem \ref{t:groconn} is easily seen to be false. In fact, a ring $A$ is connected in codimension $0$ if and only if $A/\sqrt{(0)}$ is a domain. The second part of the statement remains true, because the condition of being a domain deforms. Notice that, without assuming that $(x)$ is a radical ideal, even if $A/(x)$ is connected in codimension $0$ it may be not true that $A$ is connected in codimension $0$: For example, let $A=k\ps{a,b}/(ab)$ where $k$ is a field, and $x=\overline{a-b} \in A$. 
\end{remark}

An immediate consequence of Theorem \ref{t:groconn} is the following:

\begin{corollary} \label{c:PSpec}
Let $(A,\mm)$ be a Noetherian complete local ring, and assume that there exists an element of $\mm$ that is not in any minimal prime of $A$, and that generates a radical ideal. Then the punctured spectrum $\Spec(A)\setminus \{\mm\}$ is connected.
\end{corollary}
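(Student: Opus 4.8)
The plan is to deduce the statement directly from Theorem \ref{t:groconn} and Proposition \ref{p:basicconn}, with only some bookkeeping about dimensions in between. Write $d=\dim(A)$ and fix an element $x\in\mm$ as in the hypothesis. First I would record two elementary facts: since $x$ lies in $\mm$ but in no minimal prime, $\mm$ is not itself minimal, so $d\geq 1$; and since every prime minimal over $(x)$ strictly contains some minimal prime of $A$, one has $\dim(A/(x))\leq d-1$.

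Next I would identify what ``$\Spec(A)\setminus\{\mm\}$ is connected'' means in the language of Definition \ref{d:connectedness}. In a $d$-dimensional local ring the only closed subsets of dimension strictly less than $1$ are $\emptyset$ and $\{\mm\}$, and $\Spec(A)$ is always connected; hence the instance $s=d-1$ of the equivalence (i)$\Leftrightarrow$(ii) in Proposition \ref{p:basicconn} says exactly that $A$ is connected in codimension $d-1$ if and only if the punctured spectrum is connected (note $d\geq 1$ ensures $\Spec(A)\setminus\{\mm\}\neq\emptyset$, so the convention on the empty set causes no trouble). So it suffices to show that $A$ is connected in codimension $d-1$.

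To get this I would use the ``converse'' (deformation-lifting) direction of Theorem \ref{t:groconn}. The ring $A/(x)$ is again complete local, and it is connected in codimension $d-1$ for free: its graph $\Gamma_{\dim(A/(x))}(A/(x))$ is the complete graph, and since $d-1\geq\dim(A/(x))$ and the graphs $\Gamma_s$ are nested increasingly in $s$, the graph $\Gamma_{d-1}(A/(x))$ is complete too. Now $x$ is in no minimal prime of $A$ and $(x)$ is radical, so the second assertion of Theorem \ref{t:groconn} (applied with $s=d-1$ when $d\geq 2$) gives that $A$ is connected in codimension $d-1$, and combining with the previous paragraph finishes the proof.

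I expect the only genuinely delicate point to be the boundary case $d=1$, where $s=d-1=0$ falls outside the range $s>0$ covered by Theorem \ref{t:groconn}. Here one instead uses that being connected in codimension $0$, that is, having a domain as reduction, deforms (as noted in the remark following Theorem \ref{t:groconn}); concretely, $(x)$ radical forces $\sqrt{(0)}\subseteq(x)$, so $A_{\mathrm{red}}/(x)\cong A/(x)$ is a reduced Artinian local ring, hence a field, whence $A_{\mathrm{red}}$ is a domain because $\bar x$ is a nonzerodivisor on it. Beyond this case there is no real obstacle: the argument is essentially a translation between topological connectedness of the punctured spectrum and the combinatorics of the graphs $\Gamma_s$.
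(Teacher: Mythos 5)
Your proof is correct and follows essentially the same route as the paper's: pass to $A/(x)$, observe it is trivially connected in codimension $d-1$, apply the converse direction of Theorem \ref{t:groconn} using the radicality of $(x)$ to lift connectedness in codimension $d-1$ back to $A$, and translate via Proposition \ref{p:basicconn}. The only difference is that you explicitly separate out the boundary case $d=1$ (where $s=d-1=0$ falls outside the range of Theorem \ref{t:groconn}) and handle it via the deformation of the domain property; the paper's proof glosses over this, implicitly relying on the remark following Theorem \ref{t:groconn}, so your extra care there is a genuine, if minor, improvement in rigor.
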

\begin{proof}
We may harmlessly replace $A$ with $A/\sqrt{0}$ and assume that $x$ is regular on $A$. Set $d=\dim(A)$. 
Since $A/(x)$ is a local ring of dimension $d-1$, it is trivially connected in codimension $d-1$. By Theorem \ref{t:groconn}, since $(x)$ is radical we have that $A$ is also connected in codimension $d-1$, and by Proposition \ref{p:basicconn} this is equivalent to the punctured spectrum of $A$ being connected.
\end{proof}

We note that Corollary \ref{c:PSpec} follows also from \cite[Proposition 2.1]{HartCICo} since the assumption that a parameter $x$ generates a radical ideal implies that $\depth(A/\sqrt{0}) \geq 2$.

\begin{remark}
In \cite[Theorem 2.11]{ALNBRM} there is an extra assumption that $A$ is equidimensional, while the converse direction of Theorem \ref{t:groconn} does not require it. This is because in \cite{ALNBRM} the graphs $\Gamma_s$ are defined by means of heights of sums of minimal prime ideals of $A$ rather than in terms of dimensions of quotient rings defined by them (see Remark \ref{remark codim VS dim}).
\end{remark}

\begin{theorem}
Let $(A,\mm)$ be a Noetherian complete local ring, and $x\in\mm$ be an element which is not in any minimal prime of $A$. If $(x)$ is radical, then for $s>0$ we have that $\#\Gamma_s(A) = \#\Gamma_s(A/(x))$, where $\#$ denotes the number of connected components.
\end{theorem}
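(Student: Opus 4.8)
The plan is to reduce to the reduced case and then to identify the connected components of $\Gamma_s(A/(x))$ with the preimages, under a natural map, of the connected components of $\Gamma_s(A)$; the component‑by‑component comparison will be made to rest on Theorem~\ref{t:groconn}, applied with a shifted codimension parameter. First I would reduce to the case where $A$ is reduced and $x$ is $A$‑regular: the graphs $\Gamma_s(A)$ and $\Gamma_s(A/(x))$ depend only on the topological spaces $\Spec(A)$ and $\Spec(A/(x))$, so one may replace $A$ by $A/\sqrt0$, and the hypotheses persist because $x$ still avoids every minimal prime and, since $A/(x)$ is reduced, every nilpotent of $A$ maps to $0$ in $A/(x)$, whence $\sqrt0\subseteq(x)$ and the image of $(x)$ in $A/\sqrt0$ is again radical. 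Write $d=\dim A$, so $\dim A/(x)=d-1$. For $\qq\in\Min(A/(x))$, localizing at $\qq$ gives $\qq A_\qq=\sqrt{(x)A_\qq}=(x)A_\qq$ (the last equality because $(x)A_\qq$ is radical), so $A_\qq$ is local with principal maximal ideal generated by a nonzerodivisor, hence a DVR or a field, in particular a domain; thus $\qq$ lies over a unique minimal prime $\phi(\qq)$ of $A$, and $\phi\colon\Min(A/(x))\to\Min(A)$ is surjective.

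Next I would set up the component decomposition. Let $V_1,\dots,V_r$ be the connected components of $\Gamma_s(A)$ and put $\aaa_i=\bigcap_{\pp\in V_i}\pp$, $J_i=\bigcap_{k\ne i}\aaa_k$, so that $\aaa_i\cap J_i=\sqrt0=0$. The key algebraic input is that $x$ is a nonzerodivisor on $A/(\aaa_i+J_i)$: this follows verbatim from the argument in the proof of Theorem~\ref{t:groconn} — the identity \eqref{eqrad} with $I=\aaa_i$, $J=J_i$, together with the $\Tor$ exact sequence deduced from it, using that $x$ is regular modulo $\aaa_i$ and modulo $J_i$ because $\Ass(A/\aaa_i)\cup\Ass(A/J_i)\subseteq\Min(A)$. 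Since no edge of $\Gamma_s(A)$ joins distinct components, $\dim A/(\aaa_i+J_i)=\max_{k\ne i}\dim A/(\aaa_i+\aaa_k)\le d-s-1$, and therefore $\dim A/(\aaa_i+J_i+(x))\le d-s-2$.

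With this in hand the lower bound is immediate. The sets $W_i:=\phi^{-1}(V_i)=\{\qq\in\Min(A/(x)):\qq\supseteq\aaa_i\}$ form a partition of $\Min(A/(x))$ into nonempty pieces; and if $\qq\in W_i$, $\qq'\in W_j$ with $i\ne j$, then $\qq+\qq'\supseteq\aaa_i+J_i+(x)$, so $\dim A/(\qq+\qq')\le d-s-2<(d-1)-s$, i.e.\ $\{\qq,\qq'\}$ is not an edge of $\Gamma_s(A/(x))$. Hence $\Gamma_s(A/(x))$ is the disjoint union of the induced subgraphs on $W_1,\dots,W_r$, and in particular $\#\Gamma_s(A/(x))\ge r$. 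For the reverse inequality one must show each $\Gamma_s(A/(x))\big|_{W_i}$ is connected. Writing $e=\dim A/\aaa_i$ and $s''=e-d+s\le s$, the graph $\Gamma_{s''}(A/\aaa_i)$ has threshold $e-s''=d-s$, so it coincides with the induced subgraph of $\Gamma_s(A)$ on $V_i$ and is connected. When $s''>0$ — which is automatic once $|V_i|\ge2$, since an edge inside $V_i$ would force $\dim A/(\pp+\pp')\ge d-s=\dim A/\pp$ — I would apply the first part of Theorem~\ref{t:groconn} to $A/\aaa_i$ and $\overline x$ to get that $A/(\aaa_i+(x))$ is connected in codimension $s''$; its threshold $(e-1)-s''=(d-1)-s$ matches that of $\Gamma_s(A/(x))$, and a short argument shows $\Min(A/(\aaa_i+(x)))=W_i$ (any minimal prime of $A/(\aaa_i+(x))$ that is not a minimal prime of $A/(x)$ would strictly contain some $\qq_0\in\Min(A/(x))$ with $\qq_0\supseteq\aaa_k$, $k\ne i$, hence would have dimension $\le(d-1)-s$ and be an isolated vertex of the connected graph $\Gamma_{s''}(A/(\aaa_i+(x)))$ — absurd). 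Thus $\Gamma_s(A/(x))\big|_{W_i}$ is connected in this case, and assembling the pieces gives $\#\Gamma_s(A/(x))=\sum_i\#\bigl(\Gamma_s(A/(x))\big|_{W_i}\bigr)=r$.

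The hard part is the residual case $s''\le0$, i.e.\ $V_i=\{\pp\}$ a single isolated vertex with $\dim A/\pp\le d-s$. Here every minimal prime $\qq$ of $A/(x)$ above $\pp$ satisfies $\dim A/\qq\le(d-1)-s$ and is therefore isolated in $\Gamma_s(A/(x))$, so connectedness of $\Gamma_s(A/(x))\big|_{W_i}$ amounts exactly to proving $|W_i|=1$, i.e.\ that $(x)$ lies below a unique minimal prime over $\pp$. I expect this to be the main obstacle: it is not delivered by the dimension estimates used above and seems to be genuine content of the hypothesis "$(x)$ is radical", to be extracted once more from the nonzerodivisor property of $x$ on $A/(\pp+J_i)$ (with $\dim A/(\pp+J_i+(x))\le d-s-2$), ruling out two distinct minimal primes of $A/(x)$ over $\pp$. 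Granting this last point, the proof is complete.
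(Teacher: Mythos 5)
Your strategy is essentially the same as the paper's. Both proofs transport the connected components $V_1,\dots,V_r$ of $\Gamma_s(A)$ to subsets of $\Min(A/(x))$ (your $W_i=\phi^{-1}(V_i)$ is the paper's $\Sigma_i'$), show that $\Gamma_s(A/(x))$ has no edges between distinct pieces via the $\Tor$-vanishing/dimension argument built on \eqref{eqrad}, and derive connectedness inside each piece from Theorem~\ref{t:groconn} applied to $A/\aaa_i$. Your treatment is in fact more careful on one point: since $\Gamma_s(\cdot)$ is calibrated to the Krull dimension of its argument, the graph $\Gamma_s(A/\aaa_i)$ has a strictly lower edge-threshold than the induced subgraph of $\Gamma_s(A)$ on $V_i$ whenever $\dim(A/\aaa_i)<d$, and your shifted parameter $s''=e-d+s$ (which you observe is $\geq 1$ once $|V_i|\geq 2$) realigns the two thresholds correctly before invoking Theorem~\ref{t:groconn}. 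The paper's displayed identifications $\Sigma_i\cong\Gamma_s(A/I)$ and $\Gamma_s(A/(I+(x)))\cong\Sigma_i'$ tacitly assume $\dim(A/I)=d$, which is not automatic.

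That said, your proposal is not a complete proof, as you acknowledge. In the residual case $V_i=\{\pp\}$ with $\dim(A/\pp)\leq d-s$ (equivalently $s''\leq 0$), what you need is precisely $|M(\pp)|=1$, i.e., that $\sqrt{\pp+(x)}$ is prime, and nothing you have established delivers this: the bound $\dim\bigl(A/(\pp+J_i+(x))\bigr)\leq d-s-2$ controls the interaction of $M(\pp)$ with the other pieces, not the size of $M(\pp)$ itself. A genuine additional argument is required here — perhaps exploiting the observation, made after Corollary~\ref{c:PSpec}, that the radical hypothesis forces $\depth(A/\sqrt{0})\geq 2$, which already rules out the simplest candidate configurations with a low-dimensional minimal prime — before the proposal can be accepted. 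It is worth flagging that the paper's own write-up does not address this case either.
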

\begin{proof}
Let $d=\dim(A)$, and $\Sigma_1,\ldots,\Sigma_t$ be the connected components of $\Gamma_s(A)$. If we consider 
\[
\Sigma'_i = \left\{\qq/(x) \mid \qq \in \bigcup_{\pp \in \Sigma_i} M(\pp)\right\},
\]
then for $i=1,\ldots,t$ each $\Sigma_i'$ is a connected subset of $\Gamma_s(A/(x))$. In fact, let $I = \bigcap_{\pp \in \Sigma_i} \pp$, so that $\Sigma_i \cong \Gamma_s(A/I)$, which is connected by assumption. Since $\Min(I+(x)) = \bigcup_{\pp \in \Min(I)} M(\pp)$, we have that
\[
\Gamma_s\left(\frac{A}{I+(x)}\right) = \Gamma_s\left(\frac{A}{\bigcap_{\pp \in \Min(I)} \bigcap_{\qq \in M(\pp)}\qq} \right) = \Gamma_s\left(\frac{A/(x)}{\bigcap_{\pp \in \Min(I)} \bigcap_{\qq \in M(\pp)}\qq/(x)}\right) \cong \Sigma_i'.
\]
By Theorem \ref{t:groconn} we have that $\Gamma_s(A/(I+(x)))$ is connected, and so is $\Sigma_i'$. Finally, in order to show that there are no edges connecting $\Sigma_i'$ and $\Sigma_j'$ for $i \ne j$, let $(U',V')$ the partition of $\Min(A/(x))$ where $\emptyset \ne U'$ contains the minimal primes in $\Sigma_i'$, and $V' \ne \emptyset$ all the others. Set $I'=\bigcap_{\pp' \in U'} \pp'$ and $J'=\bigcap_{\pp' \in V'} \pp'$. If there was an edge between an element of $\Sigma_i'$ and an element of $\Sigma_j'$ for $j \ne i$, then we would have $\dim(A/(x)/(I'+J')) < (d-1)-s$. If we let $U= \bigcup_{\pp \in \Sigma_i} \pp$ and $V=\Min(A) \smallsetminus U$, then by construction we have that $U'= \{\qq/(x) \mid \qq \in M(\pp), \pp \in U\}$ and $V' = \{\qq/(x) \mid \qq \in M(\pp), \pp \in V\}$. Moreover, if we let $I = \bigcap_{\pp \in U} \qq$ and $J = \bigcap_{\qq \in V} \qq$, then $\sqrt{I+(x)} = \bigcap_{\pp \in U} \bigcap_{\qq \in M(\pp)} \qq$ and $I' = \bigcap_{\pp \in U} \bigcap_{\qq \in M(\pp)} \qq/(x) = \sqrt{I+(x)}/(x)$. Similarly for $J$. We therefore have that
\begin{align*}
\dim\left(\frac{A}{I+J}\right) & = \dim\left(\frac{A}{I+J+(x)}\right) + 1 \\
& = \dim \left(\frac{A}{\sqrt{I+(x)} + \sqrt{J+(x)}}\right) + 1 \\
& = \dim\left(\frac{A}{\bigcap_{\pp \in U} \bigcap_{\qq \in M(\pp)} \qq + \bigcap_{\pp \in V} \bigcap_{\qq \in M(\pp)} \qq}\right) +1\\
& = \dim\left(\frac{A/(x)}{\bigcap_{\pp \in U} \bigcap_{\qq \in M(\pp)} \qq/(x) + \bigcap_{\pp \in V} \bigcap_{\qq \in M(\pp)} \qq/(x)}\right) +1 \\
& = \dim\left(\frac{A/(x)}{I'+J'}\right) + 1.
\end{align*}
Since $\Sigma_i \cong \Gamma_s(A/I)$ is connected, we have that $\dim(A/(I+J)) \geq d-s$. It follows that $\dim(A/(x)/(I'+J')) \geq (d-1)-s$, and thus there are no edges between vertices in $U'$ and vertices in $V'$. It follows that $\Sigma_1',\ldots,\Sigma_t'$ are the connected components of $\Gamma_s(A/(x))$, and the proof is complete.
\end{proof}

\section{Connectedness of the associated graded ring} \label{Section main}
Let $(A,\mm_A,\kappa_A)$ be a complete unramified regular local ring, and $S=A\ps{x_1,\ldots,x_n}$. Let $I \subseteq S$ be an ideal, and $\homo(I) \subseteq T=S\ps{t}$ be the homogenization defined in Section \ref{Section deformation}. It is immediate to see that if $F \in T$ is $t$-homogeneous of degree $d_1$ and $G \in T$ is $t$-homogeneous of degree $d_2$, then $FG$ is $t$-homogeneous of degree $d_1+d_2$. Since every element $F\in T$ can be written uniquely as $F=\sum_{d\in \Z} F_d$ for some $F_d \in T$ which is $t$-homogeneous of degree $d$, the following is a subring of $T$:
\[
\overline{T} = \left\{ \sum_{d \geq N} F_d \ \bigg| \ N \in \Z, F_d \in T \text{ is } t\text{-homogeneous of degree } d\right\}.
\]
Consider the map $\Theta: \overline{T} \to S$ sending $t \mapsto 1$ and $x_i \mapsto x_i$ for all $i =1,\ldots,n$. Then $\Theta$ is a surjective ring homomorphism (note that it is not defined on $T$).

\begin{lemma}\label{l:dehom general}
Let $\Theta:\overline T \to S$ be as above, and $I \subseteq S$ be an ideal. Then $\Theta(\homo(I)\cap \overline T)=I$.
\end{lemma}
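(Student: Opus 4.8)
The plan is to establish the two inclusions $I \subseteq \Theta(\homo(I) \cap \overline S)$ and $\Theta(\homo(I) \cap \overline S) \subseteq I$ separately; the first will be immediate, and the second will carry all the content. Before starting I would record the elementary identity $\Theta(\homo(f)) = f$ for every $0 \neq f \in T$: if $f = \sum_{i \geq m(f)} f_i$ with $f_i \in P$ homogeneous of degree $i$, then $\homo(f) = \sum_{i \geq m(f)} f_i t^{\,i-m(f)}$ is $t$-homogeneous of degree $m(f) \geq 0$, hence lies in $\overline S$, and since $\Theta$ fixes the $x_j$ and sends $t \mapsto 1$ it maps $\homo(f)$ to $\sum_{i \geq m(f)} f_i = f$. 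Because $\homo(f) \in \homo(I) \cap \overline S$ whenever $f \in I$, this observation already yields the inclusion $I \subseteq \Theta(\homo(I) \cap \overline S)$.

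For the reverse inclusion I would take $F \in \homo(I) \cap \overline S$ and write $F = \sum_{k=1}^r H_k\,\homo(f_k)$ with $H_k \in S$ and $0 \neq f_k \in I$. The main obstacle is exactly here: $\homo(I)$ is generated over $S$ and not over $\overline S$, so $\Theta$ — which is only defined on $\overline S$ — cannot be applied to the individual summands $H_k\,\homo(f_k)$. My strategy for getting around this is to use $t$-homogeneity. I would decompose each coefficient $H_k = \sum_{e \in \Z} H_{k,e}$ into its $t$-homogeneous components via Lemma \ref{l:homogeneous general}; then, grouping the resulting terms by total $t$-degree and invoking the uniqueness of the $t$-homogeneous decomposition, the $t$-homogeneous components of $F$ are seen to be $F_d = \sum_{k=1}^r H_{k,\,d-m(f_k)}\,\homo(f_k)$, with $F = \sum_{d \geq N} F_d$ for some $N \in \Z$ since $F \in \overline S$. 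The key gain is that each $H_{k,e}$ is $t$-homogeneous and so lies in $\overline S$, as does each product $H_{k,e}\,\homo(f_k)$; applying the ring homomorphism $\Theta$ and using $\Theta(\homo(f_k)) = f_k$, I would conclude that $\Theta(F_d) = \sum_{k=1}^r \Theta\bigl(H_{k,\,d-m(f_k)}\bigr)\, f_k$ belongs to $I$ for every $d$.

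The last step is to recover $\Theta(F)$ from the $\Theta(F_d)$, and this is where completeness of $T$ enters. For $M \geq \max(0,N)$ I would set $F^{(M)} = \sum_{N \leq d \leq M} F_d$, a finite sum of $t$-homogeneous elements, so that $\Theta(F^{(M)}) = \sum_{N \le d \le M}\Theta(F_d) \in I$. The tail $F - F^{(M)} = \sum_{d > M} F_d$ lies in $\overline S$, and when written as a power series $\sum_i t^i g_i$ in $S = T\ps{t}$ it has every $g_i \in \mm^{\,i+M+1}$ — because all its $t$-homogeneous components have degree exceeding $M$ — so that $\Theta(F - F^{(M)})=\sum_i g_i \in \mm^{M+1}$. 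Hence $\Theta(F) = \Theta(F^{(M)}) + \Theta(F - F^{(M)}) \in I + \mm^{M+1}$ for all such $M$, and since $I$ is closed in the Noetherian local ring $T$ (that is, $I = \bigcap_M (I + \mm^{M+1})$ by Krull's intersection theorem) this forces $\Theta(F) \in I$. In summary, the routine parts are the identity $\Theta(\homo(f)) = f$ and the degree bookkeeping; the one genuine point is the passage from an $S$-linear combination to $\overline S$-linear pieces, handled by the $t$-graded decomposition, after which completeness closes the gap.
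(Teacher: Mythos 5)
Your proof is correct, and while it shares with the paper's argument the opening move of decomposing into $t$-homogeneous components via Lemma \ref{l:homogeneous general}, it closes the argument by a genuinely different route. The paper truncates each coefficient to an element $\overline{G_i}\in\overline S$ and asserts the identity $F=\sum_i\overline{G_i}\homo(f_i)$, thereby realizing $F$ as a finite $\overline S$-linear combination of the $\homo(f_i)$ to which $\Theta$ is applied in one shot. You instead work componentwise: each $F_d$ is a finite $\overline S$-linear combination of the $\homo(f_k)$, hence $\Theta(F_d)\in I$, and you then pass to the limit via the partial sums $F^{(M)}$, estimating the tail $\Theta(F-F^{(M)})\in\mm^{M+1}$ and invoking $I=\bigcap_M\bigl(I+\mm^{M+1}\bigr)$ (Krull's intersection theorem in the Noetherian local ring $T$). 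The two approaches trade one subtlety for another: the paper's one-shot route requires verifying that the reassembled series equals $F$ (and, as written, the truncation should really be $\overline{G_i}=\sum_{d\geq N-m(f_i)}G_{i,d}$ to account for the degree shift by $m(f_i)$ in the products), whereas you avoid the reassembly entirely but need closedness of ideals in $T$. Your version is arguably a bit more robust, since $\Theta$ is only ever applied to finite sums of $t$-homogeneous elements and the convergence of $\Theta$ on infinite sums never has to be addressed head-on.
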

\begin{proof} Recall that, for any $f \in S$, the homogenization $\homo(f) \in T$ is $t$-homogeneous of degree $o(f)$. Moreover, since $\Theta(\homo(f))=\Theta(t^{-o(f)}\psi(f)) = f$ for any $f\in S$, with $\psi$ the map defined in Section \ref{Section deformation}, the inclusion $I\subseteq \Theta(\homo(I)\cap \overline T)$ is clear. For the other inclusion, let $F=\sum_{d\geq N}F_d\in \homo(I)\cap \overline T$, so that we can write $F=\sum_{i=1}^rG_i\homo(f_i)$ for some $f_i\in I$ and $G_i\in T$. If we write $G_i=\sum_{d\in\ZZ}G_{i,d}$, where each $G_{i,d}\in T$ is $t$-homogeneous of degree $d$, then after setting $\overline{G_i}=\sum_{d\geq N}G_{i,d} \in \overline{T}$ we still have that $F=\sum_{i=1}^r\overline{G_i}\homo(f_i)$. It follows that $\Theta(F)=\sum_{i=1}^r\Theta(\overline{G_i})f_i\in I$, as desired.
\end{proof}

\begin{remark}\label{dimext general}
Let $A'=\widehat{A\pps{t}}^{\mm_A\pps{t}}$, and set $S=A\ps{x_1,\ldots ,x_n}$ and $S'=A'\ps{x_1,\ldots ,x_n}$. For any ideal $I \subseteq S$ we have that $\dim(S/I)=\dim(S'/IS')$. Indeed, the natural map $A \to A'$ is faithfully flat or relative dimension $0$, and therefore so are the induced maps $S\to S'$ and $S/I \to S'/I S'$. 
\end{remark}

\begin{proposition}\label{homoprop general}
Let $I,J$ be two ideals of $S=A\ps{x_1,\ldots,x_n}$, and let $T=S\ps{t}$. We have:
\begin{enumerate}
\item $\homo(I\cap J)=\homo(I)\cap \homo(J)$.
\item $I\subseteq J$ if and only if $\homo(I)\subseteq \homo(J)$.
\item $\height_S(I)+\height_S(J)=\height_T(\homo(I))+\height_T(\homo(J))$.
\item $I$ is a prime ideal of $T$ if and only if $\homo(I)$ is a prime ideal of $T$.
\item $\homo(\sqrt{I})=\sqrt{\homo(I)}$.
\item There is a $1$-$1$ correspondence between the minimal prime ideals of $I$ and those of $\homo(I)$.
\item $\Gamma_s(S/I)=\Gamma_s(T/\homo(I))$ for any $s \in \N$.
\end{enumerate}
\end{proposition}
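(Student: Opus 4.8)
The plan is to prove the seven items essentially in the order $(4),(1),(2),(5),(6),(3),(7)$, each leaning on the previous ones. The two workhorses are: (a) the de-homogenization map $\Theta\colon\overline S\to T$ of Lemma \ref{l:dehom general}, together with the observation that a $t$-homogeneous $F\in\homo(I)$ already lies in $\overline S$ and satisfies $F=t^{k}\homo(\Theta(F))$ for some $k\ge 0$ with $\Theta(F)\in I$ (here $k$ is the least power of $t$ occurring in $F$); and (b) Lemma \ref{l:nzd} together with the special- and generic-fiber descriptions in the proofs of Theorems \ref{t:deform equichar} and \ref{t:deform mixed}, for everything dimension-theoretic. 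Throughout one uses freely that $T$ and $S=T\ps{t}$ are regular local, hence catenary and equidimensional, so height and coheight are complementary.

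For $(4)$, the implication ``$I$ prime $\Rightarrow\homo(I)$ prime'' is exactly Proposition \ref{p:prime general}. For its converse, and simultaneously for the non-trivial directions of $(1)$ and $(2)$, I would use multiplicativity $\homo(fg)=\homo(f)\homo(g)$ together with Lemma \ref{l:dehom general}: if $\homo(a)\in\homo(I)$ for some $a\in T$, then $\homo(a)\in\overline S$, so $a=\Theta(\homo(a))\in\Theta(\homo(I)\cap\overline S)=I$. This gives the converse of $(4)$ at once, and the ``$\supseteq$'' half of $(2)$ by applying it to each generator $\homo(a)$ of $\homo(I)\subseteq\homo(J)$. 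For $(1)$ the inclusion $\homo(I\cap J)\subseteq\homo(I)\cap\homo(J)$ is immediate; conversely, $\homo(I)\cap\homo(J)$ is $t$-homogeneous, so by Lemma \ref{l:homogeneous general} it suffices to treat a $t$-homogeneous $F$ in it, and then $\Theta(F)\in I\cap J$ (apply Lemma \ref{l:dehom general} to both ideals) while $F=t^{k}\homo(\Theta(F))\in\homo(I\cap J)$. Then $(5)$ follows formally: $\sqrt I=\bigcap_{\pp\in\Min(I)}\pp$, so $(1)$ gives $\homo(\sqrt I)=\bigcap_{\pp\in\Min(I)}\homo(\pp)$, a finite intersection of primes (by $(4)$) each containing $\homo(I)$ (by $(2)$), hence a radical ideal containing $\homo(I)$, so $\sqrt{\homo(I)}\subseteq\homo(\sqrt I)$; the reverse inclusion is generator-wise since $\homo(f)^{n}=\homo(f^{n})$. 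Finally $(6)$: the primes $\homo(\pp)$, $\pp\in\Min(I)$, are pairwise incomparable by $(2)$, and their intersection is $\sqrt{\homo(I)}$ by $(5)$, so they are precisely $\Min(\homo(I))$, and $\pp\mapsto\homo(\pp)$ is injective again by $(2)$.

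For $(3)$ and $(7)$: by Lemma \ref{l:nzd}, $t$ is a nonzerodivisor on $S/\homo(I)$ and lies in the maximal ideal of $S$, and by the $t\mapsto 0$ computation in the proof of Theorem \ref{t:deform equichar} (resp.\ \ref{t:deform mixed}), $S/(\homo(I)+(t))$ is a completion of $\gr_{(x_{1},\dots,x_{n})}(T/I)$, whose dimension is $\dim(T/I)$; hence $\dim S/\homo(I)=\dim(T/I)+1$, and consequently $\height_{S}\homo(I)=\dim S-\dim S/\homo(I)=\dim T-\dim(T/I)=\height_{T}I$. Summing this identity for $I$ and $J$ gives $(3)$. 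For $(7)$, the vertex sets of $\Gamma_{s}(T/I)$ and $\Gamma_{s}(S/\homo(I))$ agree via $\pp\leftrightarrow\homo(\pp)$ by $(6)$, and the ambient dimensions differ by $1$, so it suffices to show, for $\pp,\pp'\in\Min(I)$, that $\dim S/(\homo(\pp)+\homo(\pp'))=\dim T/(\pp+\pp')+1$. Writing $\mathfrak A=\homo(\pp)+\homo(\pp')\subseteq\homo(\pp+\pp')$, the identity $\psi(f+g)=t^{m(f)}\homo(f)+t^{m(g)}\homo(g)$ shows $t^{m(h)}\homo(h)\in\mathfrak A$ for every $h\in\pp+\pp'$, and with Lemma \ref{l:nzd} applied to $\homo(\pp+\pp')$ this yields $(\mathfrak A:t^{\infty})=\homo(\pp+\pp')$. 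Hence $V(\mathfrak A)=V(\homo(\pp+\pp'))\cup V(\mathfrak A+(t))$, so
\[
\dim S/\mathfrak A=\max\bigl(\dim S/\homo(\pp+\pp'),\ \dim S/(\mathfrak A+(t))\bigr),
\]
where the first term equals $\dim T/(\pp+\pp')+1$ by the formula above applied to $\pp+\pp'$, and the second, via $t\mapsto 0$, equals $\dim\bigl(\gr_{(x)}(T)/(\init(\pp)+\init(\pp'))\bigr)$.

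\textbf{The main obstacle} is the remaining inequality $\dim\bigl(\gr_{(x)}(T)/(\init(\pp)+\init(\pp'))\bigr)\le\dim T/(\pp+\pp')+1$, which says that passing to associated graded rings increases the dimension of the ``join'' $V(\pp)\cap V(\pp')$ by at most one — equivalently, that the minimal primes of $\homo(\pp)+\homo(\pp')$ containing $t$ are suitably controlled. This is a Brodmann-type estimate, and it is precisely the point where, as the authors note, the theory is less smooth than classical Gröbner-basis theory: the Gröbner analogue is immediate from flatness of the Gröbner deformation, whereas here one must argue with the explicit generators of $\homo(\pp),\homo(\pp')$ and with the passage to $T'=\widehat{(S_{t})_{\nn}}$ of Proposition \ref{p:iso}, over which $\mathfrak A T'=\homo(\pp+\pp')T'$. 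Everything else is formal manipulation of $t$-homogeneous ideals plus the standard dimension bookkeeping in $T$ and $S$.
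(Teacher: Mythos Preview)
Your treatment of (1), (2), (4), (5), (6) is correct and matches the paper's. For (3) there is a mismatch between the printed statement and the paper's proof: you prove the literal sum--of--heights equality via $\height_S(\homo(I))=\height_T(I)$ (which is fine), whereas the paper's proof actually argues for $\height_T(I+J)=\height_S(\homo(I)+\homo(J))$, and it is this $I{+}J$ version that is invoked in (7). The argument is not through the special fiber but through the automorphism $\gamma$ of $T'=A'\ps{x_1,\dots,x_n}$ from Proposition~\ref{p:iso}: since $\gamma$ sends $\homo(I)T'$ to $IT'$ and $\homo(J)T'$ to $JT'$ \emph{simultaneously}, it sends their sum to $(I+J)T'$, giving $\height_{T'}\bigl((\homo(I)+\homo(J))T'\bigr)=\height_T(I+J)$. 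The paper then asserts $\height_{S_\nn}(\mathfrak A\,S_\nn)=\height_S(\mathfrak A)$ for $\mathfrak A=\homo(I)+\homo(J)$, and (7) is obtained from this together with (6) and the height--dimension bookkeeping you already have. So the missing idea in your write-up is to use $\gamma$ on $T'$ to handle the sum ideal directly, rather than trying to bound $\dim T/(\init(\pp)+\init(\pp'))$.

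That said, your ``main obstacle'' is not a side issue: it is exactly the unjustified transfer from $S_\nn$ (equivalently $T'$) back to $S$ in the paper's proof, i.e.\ the claim that the $t$-containing minimal primes of $\mathfrak A$ do not lower the height. This claim can fail. With $A=k$, $n=4$, $\pp=(x_1,x_2)$ and $\qq=(x_1+x_3^2,\,x_2+x_4^2)$ one has $\homo(\pp)=(x_1,x_2)$, $\homo(\qq)=(x_1+tx_3^2,\,x_2+tx_4^2)$, hence $\homo(\pp)+\homo(\qq)=(x_1,x_2,tx_3^2,tx_4^2)$, which has the minimal prime $(x_1,x_2,t)$ of height $3$ in $S$, while $\height_T(\pp+\qq)=4$; consequently $\Gamma_1\bigl(T/(\pp\cap\qq)\bigr)\ne\Gamma_1\bigl(S/\homo(\pp\cap\qq)\bigr)$, contradicting (7) as stated. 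What your decomposition does yield, via $\dim S/\mathfrak A\ge\dim S/\homo(\pp+\pp')=\dim T/(\pp+\pp')+1$, is the inclusion of graphs $\Gamma_s(T/I)\subseteq\Gamma_s(S/\homo(I))$ on the same vertex set; this is the only direction of (7) used in the forward implication of Theorem~\ref{t:connectedness}.
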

\begin{proof}
Let $\overline{T} \subseteq T$ be the subring defined above, together with the ring epimorphism $\Theta: \overline T\to S$. 

To prove $(1)$, note that the containment $\homo(I\cap J) \subseteq \homo(I) \cap \homo(J)$ is trivial. For the other inclusion take $F\in\homo(I)\cap\homo(J)$. Using Lemma \ref{l:homogeneous general} it is harmless to assume that $F$ is $t$-homogeneous of degree $d$. So $F\in \overline{T}$, and therefore $f:=\Theta(F)\in I\cap J$ by Lemma \ref{l:dehom general}. Note that we must have $d\leq o(f)$, and it follows that $F=t^{o(f)-d}\homo(f)\in\homo(I\cap J)$.

For $(2)$, the forward implication is clear. For the converse, let $f\in I$. Then $\homo(f)\in \homo(I)\cap \overline T\subseteq \homo(J)\cap \overline T$. So $f=\Theta(\homo(f))\in J$ by Lemma \ref{l:dehom general}.

Now we prove $(3)$. Let $A'=\widehat{A\pps{t}}^{\mm_A\pps{t}}$, and $S'=A'\ps{x_1,\ldots,x_n}$. By Remark \ref{dimext general} we have that $\height_S(I+J)=\height_{S'}((I+J)S')$. By Proposition \ref{p:iso} there is an automorphism $\gamma:S'\to S'$  of $A'$-algebras such that $\gamma(\homo(I)S')=IS'$ and $\gamma(\homo(J)S')=JS'$. Hence $\gamma(\homo(I)S'+\homo(J)S')=IS'+JS'$, and therefore $\height_{S'}((\homo(I)+\homo(J))S')=\height_{S'}((I+J)S')$. Being $S'$ the completion of $T_\nn$ at its maximal ideal, with $\nn=\mm_AT+(x_1,\ldots,x_n)T$, we have that $\height_{T_\nn}((\homo(I)+\homo(J))T_\nn)=\height_{S'}((\homo(I)+\homo(J))S')$. We conclude since $\height_{T_\nn}((\homo(I)+\homo(J))T_\nn)=\height_T(\homo(I)+\homo(J))$. 

Now to $(4)$, if $I \subseteq S$ is prime, then $\homo(I) \subseteq T$ is prime by Proposition \ref{p:prime general}. On the other hand, if $\homo(I)$ is prime in $T$, then $\homo(I)\cap \overline T$ is prime in $ \overline T$. Suppose $f,g\in S$ are such that $fg\in I$. Then $\homo(fg)=\homo(f)\homo(g)\in\homo(I)\cap \overline T$, and $\homo(f),\homo(g)\in \overline T$. It follows that either $\homo(f)$ or $\homo(g)$ are in $\homo(I)\cap \overline T$. Suppose $\homo(f) \in \homo(I)\cap \overline T$. Then $f=\Theta(\homo(f))\in I$ by Lemma \ref{l:dehom general}.

To prove $(5)$, write $\sqrt{I}=\bigcap_{\pp\in\Min(I)}\pp$. Then using (1) we have $\homo(\sqrt{I})=\bigcap_{\pp\in\Min(I)}\homo(\pp)$. Using $(4)$, we get that $\homo(\sqrt{I})$ is a radical ideal containing $\homo(I)$, so $\homo(\sqrt{I})\supseteq \sqrt{\homo(I)}$. On the other hand, let $F$ be a $t$-homogeneous element of degree $d$ of $\homo(\sqrt{I})$. By Lemma \ref{l:dehom general} we have that $f:=\Theta(F)\in \sqrt{I}$. By assumption there exists $N\in\N$ such that $f^N=\Theta(F^N)\in I$. Since $d\leq o(f)$, as already noted, we conclude that $F^N=t^{N(o(f)-d)}\homo(f^N)\in\homo(I)$, so that $F\in \sqrt{\homo(I)}$.

In order to show $(6)$, let $\pp_1,\ldots ,\pp_m$ be the minimal primes of $I$. By (5) and its proof we get that $\sqrt{\homo(I)}=\bigcap_{i=1}^s\homo(\pp_i)$. Furthermore, $\homo(\pp_i)$ is a prime ideal of $T$ for all $i=1,\ldots ,s$ by (4), and $\homo(\pp_i)\not\subseteq \homo(\pp_j)$ for any $i\neq j$ by (2). Hence the minimal primes of $\homo(I)$ are $\homo(\pp_1),\ldots ,\homo(\pp_m)$.

Finally, $(7)$ follows from $(6)$, $(3)$, and the definition of the graph $\Gamma_s$ after observing that $\dim(S/J)=\dim(S)-\height_S(J)$ for any ideal $I\subseteq J\subseteq S$, that $\dim(T/L)=\dim(S)+1-\height_T(L)$ for any ideal $\homo(I)\subseteq L\subseteq T$ and that $\dim(S/I)=\dim(T/\homo(I))-1$. 
\end{proof}

\begin{remark} \label{r:complete} Let $(R,\mm,k)$ be a complete local ring. By Cohen's structure theorem, we can write $R=Q/I$, where either $Q=k\ps{x_1,\ldots ,x_n}$ or $Q=V\ps{x_2,\ldots,x_n}$, with $V$ an unramified complete DVR. In the first case, let $S=Q$ and $\homo(I) \subseteq T=S\ps{t}$; in the second, let $S=Q\ps{x_1} = V\ps{x_1,\ldots,x_n}$, $T=S\ps{t}$, and $\whomo(I) \subseteq T$ as defined in Section \ref{Section deformation}. Recall that $\whomo(I) = \homo(\pi^{-1}(I))$, so we can still apply the properties of $\homo(-)$ we have just proved also to $\whomo(-)$. In particular, one can check that all the items of Proposition \ref{homoprop general} can be adapted verbatim to $\whomo(-)$, with the exception of (3) where a correction factor is needed. It can be checked, however, that this correction does not affect the considerations that follow in this article. Finally, we let $\homo(R)=T/\homo(I)$ in the first case, and $\homo(R)=T/\whomo(I)$ in the second. 
\end{remark}

We recall the following, see \cite[Lemma 2.1.1]{BCRV}:

\begin{lemma}\label{l:algsdim general}
Let $A$ be a Noetherian ring and $\aaa$ an ideal of $A$, for any $h\in\N$, the ideal $\aaa_{>h}=\{x\in A \mid \height(\aaa:_Ax)>h\}$ is equal to the intersection of the primary components of $\aaa$ of height at most $h$.
\end{lemma}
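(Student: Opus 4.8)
The plan is to prove the set equality $\aaa_{>h}=\bigcap_{\height(\pp_i)\le h}\qq_i$ by fixing a minimal primary decomposition of $\aaa$ and computing colon ideals. First I would write $\aaa=\qq_1\cap\cdots\cap\qq_r$ with each $\qq_i$ being $\pp_i$-primary (so that $\height(\qq_i)=\height(\pp_i)$), and recall the two standard facts to be used: colon ideals distribute over intersections, $\aaa:_Ax=\bigcap_{i=1}^r(\qq_i:_Ax)$; and for a $\pp_i$-primary ideal one has $\qq_i:_Ax=A$ if $x\in\qq_i$, while $\qq_i:_Ax$ is again $\pp_i$-primary (in particular $\sqrt{\qq_i:_Ax}=\pp_i$) if $x\notin\qq_i$.

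Combining these, taking radicals, and using that the radical commutes with finite intersections, I obtain $\sqrt{\aaa:_Ax}=\bigcap_{i:\,x\notin\qq_i}\pp_i$, and hence the key formula
\[
\height(\aaa:_Ax)=\min\{\height(\pp_i)\mid x\notin\qq_i\},
\]
with the convention that the minimum over the empty set — equivalently, the height of the unit ideal $\aaa:_Ax=A$, which occurs exactly when $x\in\aaa$ — is $+\infty$. Here I also use the elementary fact that the height of a finite intersection of prime ideals is the minimum of their heights.

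With this formula the proof finishes in one step: $\height(\aaa:_Ax)>h$ holds precisely when $\height(\pp_i)>h$ for every index $i$ with $x\notin\qq_i$; contrapositively, precisely when $x\in\qq_i$ for every $i$ with $\height(\pp_i)\le h$, i.e.\ precisely when $x$ lies in the intersection of the primary components of $\aaa$ of height at most $h$. I would also remark that, since $\aaa_{>h}$ manifestly does not depend on the chosen primary decomposition, neither does the intersection on the right — consistent with the fact that $\{\pp_i\mid\height(\pp_i)\le h\}$ is an isolated subset of $\Ass(A/\aaa)$, and when this set is empty both sides equal $A$.

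I do not expect a genuine obstacle here: the argument is the standard one, and the statement is recalled from \cite[Lemma 2.1.1]{BCRV}. The only points requiring a little care are the bookkeeping of conventions (the height of the unit ideal is $+\infty$, so that elements of $\aaa$ itself are correctly counted in $\aaa_{>h}$, matching the empty-intersection convention on the right-hand side when no component has height $\le h$) and the elementary identity $\height(\pp_{i_1}\cap\cdots\cap\pp_{i_k})=\min_j\height(\pp_{i_j})$.
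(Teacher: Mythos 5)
Your proof is correct, and it is the standard argument: reduce to a minimal primary decomposition, use that $\qq_i:_A x$ is either $A$ or $\pp_i$-primary, take radicals, and read off heights. The paper itself does not prove this lemma but only recalls it with a citation to \cite[Lemma 2.1.1]{BCRV}, so there is no in-paper proof to compare against; your argument is precisely the expected one, and the bookkeeping you flag (the convention $\height(A)=+\infty$ for $x\in\aaa$, and $\height(\pp_{i_1}\cap\cdots\cap\pp_{i_k})=\min_j\height(\pp_{i_j})$) is handled correctly, as is the observation that well-definedness of the right-hand side follows from the second uniqueness theorem since $\{\pp_i \mid \height(\pp_i)\le h\}$ is an isolated subset of $\Ass(A/\aaa)$.
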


\begin{proposition} \label{p:algsdim general}
Let $(R,\mm,k)$ be a local ring which is the homomorphic image of a Gorenstein local ring, and $G = {\rm gr}_\mm(R)$ be its associated graded ring. Then $\asdim(G)\leq \asdim(R)$.
\end{proposition}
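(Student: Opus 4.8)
The statement to prove is that $\asdim(G) \leq \asdim(R)$ when $(R,\mm,k)$ is a homomorphic image of a Gorenstein local ring. My plan is to reduce to the completion, transport the problem to the homogenization $\homo(R)$ via the deformation of Section~\ref{Section deformation}, and then use the fact that $t$ is a nonzerodivisor to compare associated primes on the two sides.

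\textbf{Step 1: reduction to the complete case.} First I would observe that the associated graded ring is insensitive to completion, in the sense that ${\rm gr}_\mm(R) \cong {\rm gr}_{\mm\widehat R}(\widehat R) = G$, and that the algebraic subdimension is unchanged under completion when $R$ is excellent (or more generally a homomorphic image of a Gorenstein ring): the flat local map $R \to \widehat R$ with Gorenstein, hence Cohen--Macaulay, fibers preserves both dimension of quotients by associated primes and the set of associated primes, up to the behaviour of $\Ass$ under flat base change. So one has $\asdim(R) = \asdim(\widehat R)$ and we may assume $R$ is complete, so that the machinery of Remark~\ref{r:complete} applies: $R = T/I$ (or $R = Q/I$ with the mixed characteristic modification), and $\homo(R) = S/\homo(I)$ (resp. $S/\whomo(I)$) with $t$ a nonzerodivisor on $\homo(R)$ by Lemma~\ref{l:nzd}, and $\homo(R)/(t) \cong \widehat G$.

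\textbf{Step 2: compare $\asdim$ across the deformation.} The heart of the argument is the chain $\asdim(\widehat G) \geq \asdim(\homo(R)) \geq \asdim(\widehat R \otimes \text{(something)})$, but here I must be careful: $\asdim$ can only drop by at most one when we cut by the nonzerodivisor $t$, and it can also go up. The clean direction to exploit is: if $t$ is a nonzerodivisor on $\homo(R)$, then for any $\qq \in \Ass(\homo(R))$ one has $t \notin \qq$, and cutting by $t$ gives primes $\qq' \in \Ass(\homo(R)/(t))$ lying over such $\qq$ with $\dim(\homo(R)/\qq') = \dim(\homo(R)/\qq) - 1$ (using that $\homo(R)$, being a quotient of the regular ring $S$, is catenary and the relevant localizations are equidimensional, or directly via the superheight/prime avoidance argument as in the proof of Theorem~\ref{t:groconn}). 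This yields $\asdim(\widehat G) = \asdim(\homo(R)/(t)) \geq \asdim(\homo(R)) - 1$, but we want the correct inequality in the $R$ direction, so the key is rather to relate $\Ass(\homo(R))$ to $\Ass(R)$ via the generic fiber isomorphism $\widehat{\homo(R)\otimes V\pps{t}} \cong \widehat{R \otimes V\pps{t}}$ from Theorems~\ref{t:deform equichar}/\ref{t:deform mixed}. One checks that inverting $t$ and completing is faithfully flat of relative dimension zero on the relevant quotients, so the associated primes and the dimensions of the corresponding quotient rings match between $\homo(R)_t$ and (a completion of) $R \otimes V\pps{t}$, giving $\asdim(\homo(R)_t) = \asdim(R)$ (the $V\pps{t}$ and completion steps being harmless for $\asdim$). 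Then I'd combine: minimal/associated primes of $\homo(R)$ not containing $t$ correspond to those of $\homo(R)_t$, and the $t$-homogeneity of $\homo(I)$ forces every associated prime of $\homo(R)$ to in fact avoid $t$ (this is essentially Lemma~\ref{l:nzd} plus the homogeneity structure of Lemma~\ref{l:homogeneous general}), so $\asdim(\homo(R)) = \asdim(\homo(R)_t) = \asdim(R)$. Finally, cutting by the nonzerodivisor $t$: every $\qq \in \Ass(\homo(R))$ (height $\leq$ something, with $\dim(\homo(R)/\qq) = \asdim(\homo(R)) = \asdim(R)$ for the worst one) gives a $\qq'\in\Ass(\homo(R)/(t)) = \Ass(\widehat G)$ with $\dim(\widehat G/\qq') = \dim(\homo(R)/\qq) - 1$; but $\dim \widehat G = \dim \homo(R) - 1$ as well, so in fact what we get is $\asdim(\widehat G) \leq \asdim(\homo(R)) = \asdim(R)$, which is exactly the claim since $\asdim(G) = \asdim(\widehat G)$ (again by flatness of completion, using the Gorenstein-image hypothesis to control $\Ass$ under completion on the graded side).

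\textbf{Main obstacle.} The delicate point — and the reason the Gorenstein-image hypothesis is in the statement — is the behaviour of $\Ass$ and of the dimension of quotients by associated primes under the two flat base changes involved: completion ($R \to \widehat R$ and $G \to \widehat G$) and the localization-completion at $t$. For minimal primes this is routine dimension theory (faithfully flat, relative dimension zero, catenary), but for \emph{embedded} associated primes one genuinely needs the fibers to be nice (Cohen--Macaulay, e.g. Gorenstein) so that going up/down along $\Ass$ behaves as expected and no new embedded primes of the wrong dimension are created or destroyed; this is precisely where Lemma~\ref{l:algsdim general}, characterizing the intersection of low-height primary components, gets used to pin down $\asdim$ in terms of the ideal $\aaa_{>h}$, which transports cleanly along flat maps with the right fibers. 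I expect checking this compatibility carefully — rather than the deformation step itself — to be the technical core of the argument.
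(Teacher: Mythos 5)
The paper's own proof takes a different and much more direct route that avoids the deformation $\homo(R)$ entirely. After reducing to the complete case via \cite[Lemma 2.3.10]{BCRV} (which is where the Gorenstein-image hypothesis is used), write $R=Q/I$ and $G\cong P/\init(I)$; the key observation is that for any $0 \ne f\in Q$ one has $\init(I:_Qf)\subseteq \init(I):_P\init(f)$, hence
\[
\height(I:_Qf)=\height(\init(I:_Qf))\leq \height(\init(I):_P\init(f)).
\]
This yields $\init(I_{>h})\subseteq (\init(I))_{>h}$ for every $h$, with $\aaa_{>h}$ as in Lemma \ref{l:algsdim general}. Setting $e=\asdim(G)$, Lemma \ref{l:algsdim general} gives $(\init(I))_{>n-e}=\init(I)$, and the chain $\init(I)\subseteq\init(I_{>n-e})\subseteq(\init(I))_{>n-e}=\init(I)$ forces $I=I_{>n-e}$; a second application of Lemma \ref{l:algsdim general} then gives $\asdim(R)\geq e$. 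This is a short colon-ideal argument that never needs to track associated primes along a flat base change.

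Your deformation-based proposal, by contrast, has a genuine gap at its center. You assert $\asdim(\homo(R))=\asdim(R)$; this is off by one, since $\dim\homo(R)=\dim R+1$ and the minimal-prime correspondence of Proposition \ref{homoprop general} shifts all quotient dimensions up by one, so the natural comparison is $\asdim(\homo(R))=\asdim(R)+1$ --- and even that is not established anywhere, since Proposition \ref{homoprop general} only controls minimal primes, not all of $\Ass$. More seriously, the step where you cut by the nonzerodivisor $t$ and claim every $\qq\in\Ass(\homo(R))$ produces a $\qq'\in\Ass(\widehat G)$ with $\dim(\widehat G/\qq')=\dim(\homo(R)/\qq)-1$ is not justified and does not hold in the generality needed: passing to $A/(t)$ can both create new embedded primes and fail to supply, above a given associated prime of $A$, an associated prime of $A/(t)$ of codimension exactly one more. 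Theorem \ref{t:grosdim} controls $\gsdim$ under such a quotient precisely because it only has to track minimal primes; $\asdim$ is governed by embedded primes, and the paper provides no analogue of that theorem for $\asdim$. You flagged this as the ``technical core'' but did not resolve it, and I do not think it can be resolved cheaply. The colon-ideal argument above is the way to go; the deformation machinery is reserved in the paper for the $\gsdim$ and connectedness statements (Theorems \ref{t:algsdim} and \ref{t:connectedness}), where only minimal primes matter.
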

\begin{proof}
Notice that $\asdim(\widehat{R})=\asdim(R)$ using \cite[Lemma 2.3.10]{BCRV}. Therefore we can assume that $R$ is complete, and thus $R=Q/I$ where either $Q=k\ps{x_1,\ldots ,x_n}$, or $Q=V\ps{x_2,\ldots,x_n}$ for some unramified complete DVR $V$ with uniformizer $p$. 
Either way, $G\cong P/\init(I)$ where $P=k[x_1,\ldots ,x_n]$ and $\init(I) = (\init(f) \mid 0 \ne f \in I)$ is the initial ideal of $I$ as recalled earlier in Section \ref{Section deformation}. Notice that, for any $0 \ne f\in Q$, $\init(I:_Qf)\subseteq \init(I):_P\init(f)$. In particular,
\[
\height(I:_Qf)=\height(\init(I:_Qf))\leq \height(\init(I):_P\init(f)).
\]
It follows that $\init(I_{>h})\subseteq (\init(I))_{>h}$ for all $h\in\N$. If $e=\asdim(G)$, using Lemma \ref{l:algsdim general} we deduce that
\[\init(I)\subseteq \init(I_{>n-e})\subseteq \init(I)_{>n-e}=\init(I),\]
and therefore $I=I_{>n-e}$. Using Lemma \ref{l:algsdim general} once again, we obtain that $\asdim(R)\geq e$.
\end{proof}

We recall that a ring $A$ is catenary if, given two prime ideals $\pp \subseteq \qq$, every strictly increasing chain of primes between them can be refined to a saturated finite chain, and all saturated chains between $\pp$ and $\qq$ have the same length. Moreover, $A$ is said to be universally catenary if every finitely generated $A$-algebra is catenary. Examples of universally catenary rings include complete local rings, or Cohen-Macaulay rings; furthermore, any localization of a Noetherian universally catenary ring is universally catenary (e.g., see \cite[Lemma 10.105.4]{Stacks}). 

Our next goal is to relate the subdimensions of $R$ and of $G$ using the results on Gr{\"o}bner deformations established in Section \ref{Section deformation}, together with Theorem \ref{t:grosdim}.

\begin{theorem}\label{t:algsdim}
Let $(R,\mm)$ be a universally catenary local ring, and $G$ be its associated graded ring. Then $\gsdim(G)\geq \gsdim(R)$. Moreover, $\gsdim(G)=\gsdim(R)$ if $G$ satisfies Serre's condition $(S_1)$.
\end{theorem}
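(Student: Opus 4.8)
The plan is to reduce to the complete case and then exploit the deformation $\homo(R)$ together with Grothendieck's theorem on subdimensions (Theorem \ref{t:grosdim}). First I would note, using \cite[Lemma 2.3.10]{BCRV} or a direct argument, that $\gsdim(\widehat R) = \gsdim(R)$ and that the associated graded ring is unchanged by completion, so we may assume $R$ is complete. Write $R = Q/I$ as in Remark \ref{r:complete}, with $T = A\ps{x_1,\dots,x_n}$, $S = T\ps{t}$, and $\homo(R) = S/\homo(I)$ (or $S/\whomo(I)$ in mixed characteristic). By Proposition \ref{homoprop general}(6) there is a bijection between minimal primes of $I$ and those of $\homo(I)$, given by $\pp \mapsto \homo(\pp)$; moreover item (3) of the same proposition relates heights, so that for each minimal prime one has $\dim(S/\homo(\pp)) = \dim(T/\pp) + 1$, hence $\gsdim(\homo(R)) = \gsdim(R) + 1$ (with the appropriate harmless correction in the mixed characteristic case, as noted in Remark \ref{r:complete}). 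On the other hand, by Theorems \ref{t:deform equichar} and \ref{t:deform mixed}, $t$ is a non-zero divisor on $\homo(R)$ — in particular $t$ is in no minimal prime — and $\homo(R)/(t) \cong \widehat G$. Applying Theorem \ref{t:grosdim}(1) with $A = \homo(R)$ and $x = t$ gives $\gsdim(\widehat G) \geq \gsdim(\homo(R)) - 1 = \gsdim(R)$. Since $G$ is the homomorphic image of a Gorenstein ring and $\gsdim$ is insensitive to completion (again via a lemma on minimal primes of $\widehat G$ versus $G$, exploiting that the fibers of $G \to \widehat G$ are zero-dimensional and that $G$ is catenary and equidimensional-in-the-relevant-sense under our hypotheses), we get $\gsdim(G) = \gsdim(\widehat G) \geq \gsdim(R)$.

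For the second statement, assume $G$ satisfies Serre's condition $(S_1)$; equivalently $G$ has no embedded associated primes, so $\asdim(G) = \gsdim(G)$, and the same passes to $\widehat G$. We already have $\gsdim(G) \geq \gsdim(R)$, so it suffices to prove the reverse inequality $\gsdim(R) \geq \gsdim(G)$. Here I would invoke Proposition \ref{p:algsdim general}: since $R$ (being complete, or more generally universally catenary and a homomorphic image of a Gorenstein ring — one reduces to the complete case where this holds automatically) satisfies the hypotheses there, $\asdim(G) \leq \asdim(R)$. Combining, $\gsdim(G) = \asdim(G) \leq \asdim(R) \leq \gsdim(R)$, which together with the first part yields equality.

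The main obstacle I anticipate is bookkeeping rather than conceptual: carefully checking that all of the invariants ($\gsdim$, $\asdim$, and the graded structure) behave correctly under the passage $R \rightsquigarrow \widehat R$ and $G \rightsquigarrow \widehat G$, and in particular confirming the height/dimension relation $\dim(S/\homo(\pp)) = \dim(T/\pp)+1$ uniformly across the equal and mixed characteristic cases (where Remark \ref{r:complete} warns of a correction factor in Proposition \ref{homoprop general}(3)). One must also be slightly careful that the universally catenary hypothesis on $R$ is genuinely used — it guarantees, via completion, the catenarity needed so that $\gsdim$ matches up correctly between $R$ and $\widehat R$, and that $G$ is a homomorphic image of a Gorenstein ring so that Proposition \ref{p:algsdim general} applies. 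The rest is a direct assembly of Proposition \ref{homoprop general}, Theorems \ref{t:deform equichar}/\ref{t:deform mixed}, Theorem \ref{t:grosdim}, and Proposition \ref{p:algsdim general}.
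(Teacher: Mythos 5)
Your proposal follows essentially the same approach as the paper's proof: reduce to the complete case, use Proposition \ref{homoprop general} (3) and (6) to get $\gsdim(\homo(R))=\gsdim(R)+1$, apply Theorem \ref{t:grosdim} with $x=t$, identify $\homo(R)/(t)\cong\widehat G$ via Theorems \ref{t:deform equichar} and \ref{t:deform mixed}, and for the second part chain together $\gsdim(G)=\asdim(G)\leq\asdim(R)\leq\gsdim(R)\leq\gsdim(G)$ via Proposition \ref{p:algsdim general}. The only discrepancy is a minor citation slip: the paper invokes \cite[Lemma 19.3.1]{BroSharp} for the invariance of $\gsdim$ under completion (reserving \cite[Lemma 2.3.10]{BCRV} for the $\asdim$ step inside Proposition \ref{p:algsdim general}), but this does not affect the argument.
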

\begin{proof}
Our assumptions on $R$ guarantees that $\gsdim(R) = \gsdim(\widehat{R})$ \cite[Lemma 19.3.1. (iv)]{BroSharp}. Therefore, we may assume that $R$ is complete. Let $\homo(R)$ be defined as in Remark \ref{r:complete}. By Proposition \ref{homoprop general} (3) and (6) we have that $\gsdim(\homo(R))=\gsdim(R)+1$. Since $\homo(R)$ is a complete local ring, by Theorem \ref{t:grosdim} we deduce that $\gsdim (\homo(R)/(t))\geq \gsdim (\homo(R))-1=\gsdim(R)$. Since $\widehat{G}\cong \homo(R)/(t)$ by Theorems \ref{t:deform equichar} and \ref{t:deform mixed}, we infer the first inequality exploiting \cite[Lemma 19.3.1 (iii)]{BroSharp}.

If $G$ satisfies $(S_1)$, then using Proposition \ref{p:algsdim general} and the above inequality we conclude that
\[
\gsdim(G)=\asdim(G)\leq \asdim(R)\leq \gsdim(R)\leq \gsdim(G),
\]
forcing equalities everywhere.
\end{proof}

\begin{corollary} \label{c:equidimensional}
Let $(R,\mm)$ be a universally catenary local ring, and $G$ be its associated graded ring. If $R$ is equidimensional, then $G$ is equidimensional. If $G$ is reduced, then the converse holds true as well.
\end{corollary}

\begin{remark} The fact that if $R$ is equidimensional then so is $G$ was already known thanks to work of Ratliff \cite[Remark (A.11.4) $\iff$ (A.11.26)]{Ratliff}.
\end{remark}

We now state the main result of this section, relating connectedness properties of $R$ and $G$. We recall that a local ring is called analytically irreducible if its completion is a domain. 

\begin{theorem}\label{t:connectedness}
Let $(R,\mm)$ be a local ring, and $G$ be its associated graded ring. Assume that $R/\pp$ is analytically irreducible for every $\pp \in \Min(R)$. Let $s>0$ be an integer. If $R$ is connected in codimension $s$, then $G$ is connected in codimension $s$. If $G$ is reduced, then the converse holds true as well.
\end{theorem}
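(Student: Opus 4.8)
The plan is to reduce immediately to the complete case and then transport the connectedness statement through the Gröbner-like deformation, using Theorem \ref{t:groconn} as the engine. First I would observe that connectedness in codimension $s$ of $R$ is, by definition, a statement about $\Min(R)$, the dimensions $\dim(R/(\pp+\qq))$, and $\dim(R)$; all of these are insensitive to completion once we know that each $R/\pp$ stays irreducible after completion, which is exactly the hypothesis imposed. Concretely, the minimal primes of $\widehat R$ are in bijection with those of $R$ (this is where $R/\pp$ analytically irreducible is used), and for $\pp,\qq \in \Min(R)$ one has $\dim(\widehat R/(\widehat{\pp}+\widehat{\qq})) = \dim(R/(\pp+\qq))$ while $\dim(\widehat R)=\dim(R)$; hence $\Gamma_s(R) = \Gamma_s(\widehat R)$, and similarly $G$ is unchanged by passing to $\widehat R$. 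So we may assume $R$ is complete.

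Now let $\homo(R)=S/\homo(I)$ (or $S/\whomo(I)$ in mixed characteristic) be the total space of the deformation from Remark \ref{r:complete}, which is a complete local ring, and let $t \in \homo(R)$ be the non-zero divisor of Theorems \ref{t:deform equichar} and \ref{t:deform mixed}, so that $\homo(R)/(t) \cong \widehat G$. By Proposition \ref{homoprop general}(7) (adapted to $\whomo(-)$ as noted in Remark \ref{r:complete}), we have $\Gamma_s(\homo(R)) = \Gamma_s(T/I) = \Gamma_s(R)$ for every $s$; in particular $\homo(R)$ is connected in codimension $s$ if and only if $R$ is. Since $t$ is a non-zero divisor on $\homo(R)$, it lies in no minimal prime, so Theorem \ref{t:groconn} applies to the pair $(\homo(R),t)$: if $\homo(R)$ is connected in codimension $s>0$, then so is $\homo(R)/(t) \cong \widehat G$, hence $G$ is connected in codimension $s$. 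This proves the forward direction. For the converse, assume $G$ is reduced. Then $\widehat G = \homo(R)/(t)$ is reduced, i.e. $(t)$ is a radical ideal of $\homo(R)$; by the second part of Theorem \ref{t:groconn}, connectedness in codimension $s$ of $\homo(R)/(t) \cong \widehat G$ descends to $\homo(R)$, hence to $R$.

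The main obstacle is the reduction step: one must be careful that the graph $\Gamma_s$ really is a completion-invariant, which genuinely requires the analytic irreducibility hypothesis on the $R/\pp$ (without it, a single minimal prime of $R$ can split into several in $\widehat R$, changing both the vertex set and the edge structure of $\Gamma_s$), and one should double-check the dimension formulas $\dim(\widehat R/(\widehat{\pp}+\widehat{\qq})) = \dim(R/(\pp+\qq))$ using that completion is faithfully flat with zero-dimensional fibers. The remaining steps are essentially bookkeeping: translating between $\Gamma_s(R)$, $\Gamma_s(\homo(R))$ and $\Gamma_s(\widehat G)$ via Proposition \ref{homoprop general}, and invoking Theorem \ref{t:groconn} in both directions, noting that in the converse direction the reducedness of $G$ is precisely what guarantees the hypothesis ``$(t)$ radical'' needed there.
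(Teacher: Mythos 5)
Your proposal follows essentially the same route as the paper: reduce to the complete case, transport connectedness through $\homo(R)$ via Proposition~\ref{homoprop general}(7), and invoke Theorem~\ref{t:groconn} in both directions, with $t$ regular for the forward implication and $(t)$ radical for the converse. Two steps, however, are asserted without justification and do require an argument. First, the implication ``$G$ reduced $\Rightarrow \widehat G$ reduced'' is not automatic for an arbitrary Noetherian local ring; it holds here because $G$ is a finitely generated algebra over the field $k=R/\mm$, hence excellent (the paper instead argues directly via \cite[Lemma 2.2.9]{BCRV}, showing that the minimal primes of $G$ extend to primes of $\widehat G$ and that their intersection stays zero by flatness). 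Second, in the converse direction you apply Theorem~\ref{t:groconn} to conclude that $\homo(R)$ is connected in codimension $s$ from the fact that $\homo(R)/(t)\cong\widehat G$ is; but your hypothesis is that $G$, not $\widehat G$, is connected in codimension $s$, so you must first pass this property from $G$ to $\widehat G$ (the paper does this via \cite[Corollary 2.2.10]{BCRV}, and it again rests on $G$ being excellent). Both gaps are one-liners once noted, so your proof is correct in outline and coincides with the paper's argument; it just needs these two bookkeeping steps made explicit.
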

\begin{proof}
By \cite[Lemma 19.3.1 (ii)]{BroSharp} we may assume that $R$ is complete. Let $\homo(R)$ be defined as in Remark \ref{r:complete}. By Proposition \ref{homoprop general} $(7)$ we have that $R$ is connected in codimension $s$ if and only if $\homo(R)$ is connected in codimension $s$. Since $\homo(R)$ is a complete local ring and $t$ is regular on $\hom(R)$, by Theorem \ref{t:groconn} we have that $\homo(R)/(t)$ is connected in codimension $s$. By Theorems \ref{t:deform equichar} and \ref{t:deform mixed} we have that $\homo(R)/(t) \cong \widehat{G}$, and the first inequality follows then from \cite[Lemma 19.3.1 (i)]{BroSharp}.

Now note that $G$ is connected in codimension $s$ if and only if $\widehat{G}$ is connected in codimension $s$ by \cite[Corollary 2.2.10]{BCRV}. If $G$ is reduced, then $(0)=\bigcap_{\pp\in\Min(G)}\pp$. Note that by \cite[Lemma 2.2.9]{BCRV} $\pp\widehat{G}$ is a prime ideal of $\widehat{G}$ for any $\pp\in\Min(G)$, and since the extension $G\to \widehat{G}$ is flat, we have that $(0)=\bigcap_{\pp\in\Min(G)}\pp \widehat{G}$. In particular, $\widehat{G}\cong \homo(R)/(t)$ is reduced and connected in codimension $s$. By the second part of Theorem \ref{t:groconn} we have that $\homo(R)$ is connected in codimension $s$, and we conclude by Proposition \ref{homoprop general} $(7)$ that $R$ is connected in codimension $s$ as well.
\end{proof}

\begin{remark} We point out that the assumptions on $R$ that we made in \ref{t:algsdim}, \ref{c:equidimensional} and \ref{t:connectedness}, namely, that $R$ is either universally catenary or $R/\pp$ is analytically irreducible for all $\pp \in \Min(R)$, are trivially satisfied if $R$ is already assumed to be complete. Moreover, if $R$ is the localization of a non-negatively graded $R_0=k$-algebra at the homogeneous maximal ideal, then it is universally catenary and it satisfies the assumptions of Theorem \ref{t:connectedness} \cite[Lemma 2.2.9]{BCRV}.
\end{remark}

We end the section showing that the assumption that $G$ is reduced in the converse statement of Theorem \ref{t:connectedness} cannot be removed.

\begin{example} Let $k$ be a field, and $R=k\ps{x,y,z}/(x(x+y^2),xz)$. We have that $G \cong k[x,y,z]/(x^2,xz)$ is connected in codimension $0$, hence connected in codimension $1$, but $R$ is not connected in codimension $1$. Note that $G$ is not reduced.
\end{example}

\section{Bounds on numerical invariants of local domains} \label{Section bounds}

Let $(R,\mm)$ be a local ring of dimension $d=\dim(R) \geq 2$ and   $G={\rm gr}_\mm(R)$ denotes the associated graded ring. We recall that the Hilbert function of $R$ is defined as the Hilbert function of $G, $ that is $\HF_R(j) = \HF_G(j)= \dim_k\left(\mm^j/ \mm^{j+1}\right)$. Then the Hilbert series $\HS_G(t)= \sum_{j\ge0} \HF_G(j) t^j=  \frac{h(t)}{(1-t)^d}$ where $h(t) = \sum_{i \geq 0}h_it^i \in \ZZ[t]$ and $d=\dim(G)= \dim(R)$. For $i \geq 0$ we set $\e_i(R):=\e_i(G) = \frac{d^i(h(t))}{dt^i}|_{t=1}$ to be the $i$-th Hilbert coefficient of the graded algebra $G$. In particular, $\e_0(R) = \e(R) = h(1)$ is the multiplicity. 
We recall that, if $J$ is an $\mm$-primary ideal, the Hilbert-Samuel function $i \mapsto \ell_R(R/J^{j+1})$ of $R$ coincides for $j \gg 0$ with a polynomial of degree $d$, namely its Hilbert polynomial:
\[
\HP_{R,J}(j) = e_0(J)\binom{j+d}{d} -\e_1(J)\binom{j+d-1}{d-1} + \ldots +(-1)^d\e_d(J).
\] 
The integers $\e_i(J)$ are called Hilbert coefficients of $J$. When $J = \mm$ we write $\HP_{R}$ for $\HP_{R,\mm}$. Observe that $\e_i(\mm)$ coincides with $\e_i(R)$ as defined above; in particular, $\e_0(\mm) = \e(R)$ is the multiplicity. Finally, we let $\delta(R) = \HF_R(1)-d$ denote the embedding codimension of $R$, that is, the difference between the minimal number of generators of $\mm$ and its height.

If $R$ is Cohen-Macaulay, several inequalities relating the above invariants are classically known to hold:
\begin{itemize}
\item (A) $\e(R) \geq \delta(R)+1$  \cite{Abhyankar},
\item (N) $\e_1(R) \geq \e(R)-1$ \cite{Northcott},
\item (S) $\e_2(R) \geq \e_1(R)-\e(R)+1 \ge 0$ \cite{Sally}.
\end{itemize}

In principle the knowledge on $e(R) $ and $e_1(R)$ gives only partial information on the Hilbert polynomial and asymptotic information on the Hilbert function, nevertheless equalities in $(A)$ and $(N)$ force $G$ to be Cohen-Macaulay and to have a specific Hilbert function. If $G$ is also assumed to be Cohen-Macaulay, then $h_i \geq 0$ for all $i \geq 0$. 

Some of these inequalities are known to be true also with conditions different from $R$ being Cohen-Macaulay. For instance, if $G$ is a graded domain over an algebraically closed field, then (A) is known to hold (for instance, see \cite[Proposition 5.3]{GSyz}). This is not true in general even for local domains, as we will show later. Note that, if $R\cong Q/I$ for some regular local ring $(Q,\mm_Q)$ and $I \subseteq \mm_Q^2$, then $\delta(R) = \height_Q(I)$, and Abhyankar's inequality $(A)$ can be rewritten as $\e(R) \geq \height_Q(I)+1$. Similarly, if we write $G \cong P/J$ for $J \subseteq P=k[x_1,\ldots,x_n]$ a homogeneous ideal containing no linear forms, then $\delta(R)=\height_P(J)$ and the inequality becomes $\e(R)=\e(G) \geq \height_P(J)+1$.

Now we show that a variation of (A) holds even just assuming that $R$ is connected in codimension 1; for instance, if it is a domain.


\begin{theorem} \label{thm h2 and multiplicity linear forms}
Let $(R,\mm)$ be a complete local ring of dimension $d$, with algebraically closed residue field $k$. Write $G\cong P/J$ for some homogeneous ideal $J \subseteq P=k[x_1,\ldots,x_n]$ containing no linear forms, and set $\ell=\dim_k([\sqrt{J}]_1)$. If $R$ is connected in codimension one, then 
\begin{itemize} 
\item $\e(R) \geq \delta(R)+1-\ell$,
\item $h_2(R) \geq -d\ell.$ 
\end{itemize}
Moreover, if $G$ satisfies Serre's condition $(S_1)$ and $\ell \ne 0$, then the first inequality is strict.
\end{theorem}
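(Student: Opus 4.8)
The plan is to reduce to the associated graded ring modulo its nilpotents. Write $G_{\mathrm{red}}=P/\sqrt J$, and note that $\e(R)=\e(G)$ and $\delta(R)=\HF_R(1)-d=n-d$, since $J$ contains no linear forms. As $[\sqrt J]_1$ is an $\ell$-dimensional subspace of $P_1$, after a linear change of the $x_i$ (harmless, $k$ being infinite) I may assume $[\sqrt J]_1=\langle x_1,\dots,x_\ell\rangle$, so that $(x_1,\dots,x_\ell)\subseteq\sqrt J$ and hence $G_{\mathrm{red}}\cong\bar P/\bar J$ with $\bar P=k[x_{\ell+1},\dots,x_n]$ and $\bar J\subseteq\bar P$ a radical homogeneous ideal with $[\bar J]_1=0$. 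Thus $G_{\mathrm{red}}$ is a reduced standard graded $k$-algebra of dimension $d$ with embedding dimension $n-\ell$.

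Next I would verify that $G_{\mathrm{red}}$ is connected in codimension one and apply the graded form of Abhyankar's inequality there. Since $R$ is complete, $R/\pp$ is a complete local domain, hence analytically irreducible, for every $\pp\in\Min(R)$; so by Theorem~\ref{t:connectedness} the ring $G$ is connected in codimension one, and because $G$ and $G_{\mathrm{red}}$ have the same minimal primes with the same quotient dimensions, $\Gamma_1(G)=\Gamma_1(G_{\mathrm{red}})$ is connected. Applying \cite[Proposition~5.2]{GSyz} to $G_{\mathrm{red}}$ gives $\e(G_{\mathrm{red}})\ge(n-\ell)-d+1=\delta(R)+1-\ell$. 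From the short exact sequence $0\to\sqrt0\to G\to G_{\mathrm{red}}\to0$ and additivity of multiplicity, $\e(R)=\e(G)=\e(G_{\mathrm{red}})+\e(\sqrt0)\ge\e(G_{\mathrm{red}})$, which is the first inequality.

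For the $h_2$ bound, write $h_2(B)=\HF_B(2)-\binom{d+1}{2}-d\bigl(\HF_B(1)-d\bigr)$ for $B\in\{G,G_{\mathrm{red}}\}$. Since $\HF_G(1)-\HF_{G_{\mathrm{red}}}(1)=\ell$ and $\HF_G(2)\ge\HF_{G_{\mathrm{red}}}(2)$ (because $G\twoheadrightarrow G_{\mathrm{red}}$), subtracting gives $h_2(R)=h_2(G)\ge h_2(G_{\mathrm{red}})-d\ell$. It then remains to know that $h_2(G_{\mathrm{red}})\ge0$ for reduced standard graded algebras connected in codimension one over an algebraically closed field; this is part of, or follows by the methods of, \cite[Proposition~5.2]{GSyz}, and yields $h_2(R)\ge-d\ell$.

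Finally, for the strictness under $(S_1)$: a ring connected in codimension one is equidimensional, since a minimal prime of dimension $<d$ would be an isolated vertex of $\Gamma_1$; hence $R$, and so $G$ by Corollary~\ref{c:equidimensional}, is equidimensional, and once $(S_1)$ holds we get $\Ass(G)=\Min(G)$, all of dimension $d$. Whenever $\sqrt0\ne0$ — in particular when $\ell>0$, since then $x_1\in\sqrt0$ — we have $\Ass(\sqrt0)\subseteq\Ass(G)$, forcing $\dim\sqrt0=d$ and $\e(\sqrt0)\ge1$, whence $\e(R)\ge\e(G_{\mathrm{red}})+1>\delta(R)+1-\ell$. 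The main obstacle I anticipate is making precise exactly what is imported from \cite{GSyz} — both the multiplicity bound and the inequality $h_2\ge0$ — in the connected-in-codimension-one, not-necessarily-prime setting over an algebraically closed field; the transfer from $G_{\mathrm{red}}$ back to $G$ via additivity of multiplicity and the Hilbert-function comparison is then routine.
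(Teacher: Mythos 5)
Your proposal is correct and follows essentially the same route as the paper: reduce to $G_{\mathrm{red}}$, use Theorem~\ref{t:connectedness} to get connectedness in codimension $1$ for $G$ (and hence for $G_{\mathrm{red}}$, since $\Gamma_1(G)=\Gamma_1(G_{\mathrm{red}})$), invoke a known graded result for the reduced case, transfer back to $G$ by a Hilbert-function/multiplicity comparison, and use no embedded primes for strictness. Two remarks on the details.

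First, the place where you flagged uncertainty is indeed the one point that needs a precise reference: the paper appeals to \cite[Theorem~5.6]{DMV} (not \cite[Proposition~5.2]{GSyz}) for both $\e(G_{\mathrm{red}})\ge \height+1$ and $h_2(G_{\mathrm{red}})\ge 0$ in the equidimensional, connected-in-codimension-one, algebraically closed setting. Once you substitute that citation, your two inequalities follow exactly as you wrote them; the paper's version of the $h_2$ computation is the same comparison, written out as $h_2(G)+d\ell\ge h_2(G_{\mathrm{red}})$ via $\dim_k J_2 \le \dim_k(\sqrt J)_2$, which is equivalent to your ``$\HF_G(2)\ge \HF_{G_{\mathrm{red}}}(2)$ plus a $d\ell$ shift.''

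Second, for the strictness, your argument via the exact sequence $0\to\sqrt0\to G\to G_{\mathrm{red}}\to 0$, combined with $\Ass(\sqrt 0)\subseteq\Ass(G)=\Min(G)$ under $(S_1)$ (so $\dim\sqrt0=d$ and $\e(G)=\e(G_{\mathrm{red}})+\e(\sqrt0)>\e(G_{\mathrm{red}})$), is a legitimate and slightly slicker variant of the paper's argument, which instead uses the associativity formula $\e(G)=\sum_{\pp\in\Ass(G)}\ell_{G_\pp}(G_\pp)\,\e(G/\pp)$ and exhibits a primary component $\qq$ of $J$ with $\qq_1=0$ but $\sqrt\qq_1\ne 0$ to force some $\ell_{G_\pp}(G_\pp)\ge 2$. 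Both arguments, like the paper's, genuinely need $\ell>0$ (equivalently $\sqrt 0\ne 0$) to get strictness; you noted this explicitly, which is correct and matches the structure of the paper's proof, which splits off the case $\ell=0$ at the start.
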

\begin{proof}
The condition that $J$ contains no linear forms gives is equivalent to the fact that $\height_P(J) = \delta(R)$. If $\ell = 0$ the result follows at once from \cite[Theorem 5.9]{DMV}, because $G$ is connected in codimension $1$ by Theorem \ref{t:connectedness}. So let us assume $\ell>0$ henceforth.

If we set $\height_P(J)=:h$, the condition that $J_1=0$ gives
\[
h_2(R) = h_2(G) = \binom{h+1}{2} - \dim_k(J_2).
\]
Note that $h_1(G_{\rm red}) = h-\ell$. If we let $L=\sqrt{J}$, we obtain that
\[
h_2(G_{\rm red}) = \binom{h+1}{2} - \dim_k(L_2)+ d\ell.
\]
Since $J \subseteq L$, we have that
\[
h_2(G) +d\ell = \binom{h+1}{2} - \dim_k(J_2) + d\ell \geq \binom{h+1}{2} - \dim_k(L_2) + d\ell = h_2(G_{\rm red}).
\]
As $h_2(G_{\rm red})$ does not depend on the given presentation as a quotient of $P$, we may kill the linear forms contained in $L_1$ and assume that the ideal defining $G_{\rm red}$ contains no linear forms. By \cite[Theorem 5.9]{DMV} we conclude that $h_2(G_{\rm red}) \geq 0$, and the second claimed inequality follows. For the first, note that $\e(R) = \e(G) \geq \e(G_{{\rm red}})$. As above, if we choose a presentation of $G_{{\rm red}}$ for which the defining ideal does not contain linear forms, then by our assumption its height is $h-\ell$. The inequality now follows again from \cite[Theorem 5.9]{DMV} (see also \cite{EisenbudGoto}).

Finally, assume that $G$ satisfies Serre's condition $(S_1)$. Then every associated prime of $G$ is minimal, and since $G$ is connected in codimension $1$ it is equidimensional, hence unmixed. From the associativity formula for multiplicities, we have that 
\[
\e(G) = \sum_{\pp \in \Ass(G)} \e(G/\pp) \ell_{G_\pp}(G_\pp) \geq \sum_{\pp\in \Ass(G)} \e(G/\pp) = \e(G_{{\rm red}}).
\]
To conclude it suffices to show that there exists $\pp \in \Ass(G)$ such that $\ell_{G_\pp}(G_\pp) >1$. Since $L_1 \ne 0$ but $J_1 = 0$, and using that all associated primes of $G=P/J$ are minimal, we conclude that there must exist a primary component $\qq$ of $J$ such that $\qq_1 = 0$ and $\pp = \sqrt{\qq}$ contains a linear form. Let $x \in \pp_1 \smallsetminus \qq_1$. Since $\qq$ is $\pp$-primary, we also have $x \notin \qq G_\pp$, that is, the inclusions $(0) \subsetneq (x)G_\pp \subsetneq G_\pp$ are strict. It follows that $\ell_{G_\pp}(G_\pp) \geq 2$, as desired.
\end{proof}

In \cite{ST}, Srinivas and Trivedi produce a family of ideals $\{I_n\}_{n >0}$ inside $Q=k\ps{x,y,z,w}$ ($k$ is any field) such that $Q/I_n$ is $2$-dimensional, $\e(Q/I_n) = 4$, 
and that for odd values of $n$ is an integral domain. In particular, note that for $n \gg 0$ the inequalities (N) and (S) fail; however, the members of this family still satisfy (A) since $\height_Q(I_n) = 2$ for all $n$. Moreover, Srinivas and Trivedi show that $\HP_{Q/I_n} \ne \HP_{Q/I_{n'}}$ if $n \ne n'$. This is in stark contrast with the graded setup, where Kleiman \cite{Kleiman} proved that, for a graded domain of fixed dimension and multiplicity, there are only finitely many possible Hilbert functions.

A surprising result in \cite{GGHOPV} characterizes the Cohen-Macaulayness of an unmixed local ring $R$ in terms of the vanishing of the $e_1(\qq) $ where $\qq$ is a minimal reduction of $\mm$.  In particular $\e_1(\qq) \leq 0$ always holds, with equality if and only if $R$ is Cohen-Macaulay. An analogous investigation for the property of being Buchsbaum was discussed by Goto and Ozeki in \cite{GotoOzeki}. The integer $e_1(\qq)$ was considered by Goto and Nishida as a correction term in Northcott's inequality (N), in order to get rid of the assumption that $R$ is Cohen-Macaulay:
\begin{itemize}
\item (GN) $\e_1(R) - \e_1(\qq) \geq \e(R) -1$   \cite{GotoNishida}.
\end{itemize}

In the same setup as \cite{ST}, we produce a family of prime ideals $\{\pp_n\}_{n >0}$ that still fail to satisfy (N) and (S), but also do not satisfy (A). In view of Theorem \ref{thm h2 and multiplicity linear forms}, the radical of their initial ideal must contain linear forms. We also prove that  the family of local domains we present satisfies the equality in (GN).

\begin{example} \label{Ex}
Let $k$ be any field, $Q=k\ps{x,y,z,w}$ and $n \geq 1$ be an integer. Consider the family of ideals
\[
\pp_n=(x^2-z^{2n+1}w,xy-z^{n+1}w^{n+1},y^2-zw^{2n+1},yz^n-xw^n),
\]
and let $G_n = P/\init(\pp_n)$ be the associated graded ring of $Q/\pp_n$, where $P=k[X,Y,Z,W]$. We claim that for every $n \geq 1$:
\begin{enumerate}
\item $Q/\pp_n$ is a $2$-dimensional domain with an isolated singularity. Moreover, $Q/\pp_n \cong k\ps{st^{2n+1},s^{2n+1}t,t^2,s^2}$.
\item $\e(Q/\pp_n) = 2$, and $\e_1(Q/\pp_n) = 1-n$. In particular, $\e(Q/\pp_n)<\height_Q(\pp_n)+1$.
\item $\init(\pp_n) = (X^2,XY,Y^2,YZ^n-XW^n)$ is $(X,Y)$-primary, and $G_n$ satisfies Serre's condition $(S_1)$.
\item If $\HP_n(t)$ denotes the Hilbert polynomial of $G_n$, then $\HP_n \ne \HP_{n'}$ if $n \ne n'$.
\end{enumerate}
Let $n \geq 1$. Let $\qq_n$ be the kernel of the $k$-algebra homomorphism $\varphi_n:Q\to k\ps{s,t}$ sending
\[
x \mapsto st^{2n+1} \quad y \mapsto s^{2n+1}t \quad z \mapsto t^2 \quad w \mapsto s^2.
\]
One can directly check that $\pp_n \subseteq \qq_n$. Since $\qq_n$ is a prime ideal of height $2$, and the generators of $\pp_n$ have no common factor, we must have $\height(\pp_n)=2$. In particular, $\qq_n$ is a minimal prime of $\pp_n$. Now set $J_n=(X^2,XY,Y^2,YZ^n-XW^n)$. If we invert $Z$, then $(J_n)_Z=(X^2,Y-XW^nZ^{-n})_Z$ is a complete intersection. Similarly for $W$. Thus, if $J_n$ had any embedded associated prime, it would have to contain $\sqrt{J_n}+(Z,W) = (X,Y,Z,W)$.  However, an easy computation shows that $\init_{{\rm degrevlex}}(J_n) = (X^2,XY,Y^2,YZ^n)$, which has positive depth. It follows that $(X,Y,Z,W)$ cannot be associated to $P/J_n$, and thus $J_n$ is unmixed. 
Since the multiplicity does not change by passing to the initial ideal $ \init_{{\rm degrevlex}}(J_n) = (X^2,Y) \cap (X,Y^2,Z^n)$, we have that $\e(P/J_n) = \e(P/(X^2,Y)) = 2$. Note that $J_n$ is not radical but it is unmixed, and because $\e(P/J_n) = 2$, we conclude that $J$ is a $(X,Y)$-primary ideal. In particular, since $1<\e(P/\init(\pp_n)) \leq \e(P/J_n)=2$, we conclude that $J_n$ and $\init(\pp_n)$ have the same multiplicity, so that $\e(Q/\pp_n) = \e(G_n) = 2$. Note that, as $J_n$ is unmixed, the equality on multiplicities actually forces $J_n=\init(\pp_n)$. This also shows that $G_n = P/J_n$ satisfies Serre's condition $(S_1)$, and (3) is proved. For (2), using that the Hilbert Series of $G_n$ does not change by passing to the initial ideal $(X^2,XY,Y^2,YZ^n)$, an easy calculation shows that 
\[
{\rm HS}_{G_n}(t) = \frac{h(t)}{(1-t)^2}, \text{ where } h(t) = 1+2t-t^{n+1}.
\]
It follows that $\e_1(G_n) = h'(1) = 1-n$, and (2) is proved. Moreover, since $\e_2(G_n) = -n(n+1)$, the Hilbert polynomial of $G_n$ is 
\[
\HP_n(i) = \e_0(G_n)\binom{i+2}{2} - \e_1(G_n)(i+1) + \e_2(G_n) = i^2 + (n+2)i - \frac{(n+1)(n-2)}{2},
\]
and (3) follows. Finally, for (1), we consider the Jacobian matrix of $Q/\pp_n$:
\[
\begin{bmatrix} 2x & y & 0 & -w^n \\ 0 & x & 2y & z^n \\ -(2n+1)z^{2n}w & -(n+1)z^nw^{n+1} & -w^{2n+1} & -nyz^{n-1} \\ -z^{2n+1} & -(n+1)z^{n+1}w^n & -(2n+1)zw^n & -nxw^{n-1}
\end{bmatrix}.
\]
The $2$-minor corresponding to rows and columns $[24 | 14]$ is $z^{3n+1}$, and the $2$-minor corresponding to $[13|34]$ is $-w^{3n+1}$. Thus both $z$ and $w$ are in the radical of the Jacobian ideal of $Q/\pp_n$, and since they form a full system of parameters it follows that $Q/\pp_n$ has an isolated singularity. Since $G_n$ is unmixed, so is $Q/\pp_n$, and it follows that $\pp_n$ is a prime ideal. In particular, $\pp_n=\qq_n$, and thus $Q/\pp_n \cong k\ps{st^{2n+1},s^{2n+1}t,t^2,s^2}$.

We now make some further remarks on the family of examples we constructed. We let $R_n = Q/\pp_n$, with maximal ideal $\mm_n$. Firstly, we observe that if we take a minimal free resolution of $R_n$ over $Q$
\[
\xymatrix{
0 \ar[r] & Q \ar[rr]^-{\begin{bmatrix} w^n \\ z^n \\ -y \\ -x \end{bmatrix}} && Q^4 \ar[rrrrr]^-{\begin{bmatrix} y & 0 & w^n & 0 \\ -x & y & -z^n & w^n \\ 0 & -x & 0 & -z^n \\ z^{n+1}w & zw^{n+1} & x & y \end{bmatrix}} &&&&& Q^4 \ar[r] & Q \ar[r] & 0,
}
\]
then by local duality it follows that
\[
\ell_{R_n}(H^1_{\mm_n}(R_n)) = \ell_Q(\Ext^3_Q(R_n,Q)) = \ell_Q(Q/(x,y,z^n,w^n)) = n^2.
\]
We remark that the length of $H^1_{\mm_n}(R_n)$ must be an unbounded function of $n$ in view of Example \ref{Ex}, and in accordance with \cite{Trivedi_generalized}.

Lastly, regarding the inequality (GN), we show that the family of examples constructed above is extremal in the sense that it gives equalities in the bound of Goto and Nishida.

Consider the parameter ideal $\qq=(z,w)$. Since $z$ is a superficial element, we have that $\e_1(\qq) = \e_1(\overline{\qq})$, where $\overline{\qq} = (w)$ is the image of $J$ inside $\overline{R_n} = R_n/(z)$. Since $\qq$ is a minimal reduction of $\mm_n$, we have that $\e_0(\qq) = \e_0(\overline{\qq}) = \e(R_n) = 2$. A direct calculation shows that for $j \gg 0$ one has
\[
\ell_{\overline{R_n}}\left(\frac{\overline{R_n}}{(w^{j+1})}\right) = \ell_Q\left(\frac{k\ps{x,y,w}}{(x^2,xy,y^2,xw^n,w^{j+1})}\right) = 2(j+1) + n,
\]
and it follows that $\e_1(\qq) = -n$. Since $\e_1(R_n) =1-n$ and $\e(R_n)=2$, it follows that $\e_1(R_n) - \e_1(\qq) = \e(R_n) -1$, as claimed.
\end{example}

\begin{remark} In order to produce examples of complete local domains that do not satisfy (A), the family we exhibit in Example \ref{Ex} is ``minimal'' in many respects: (A) holds true for one-dimensional domains as they are Cohen-Macaulay. The assumption that the height is at least two is clear, as well as the assumption that the multiplicity is at least two. Moreover, in view of Theorem \ref{thm h2 and multiplicity linear forms} and the fact that $G_n$ as in the example has no embedded associated primes, the condition that $\sqrt{\init(\pp_n)}$ contains more than one linear form is also necessary. Finally, observe that $G_n$ is of minimal degree in the sense of \cite[Section 3]{EisenbudGoto}, as one can check that $\dim_k\left(\left[H^1_{(x,y,z,w)}(G_n)\right]_{0}\right)=1$ for all $n$.
\end{remark}

The family of prime ideals in Example \ref{Ex} shows, in particular, that infinitely many polynomials $h(t)\in\ZZ[t]$ can arise as the numerator of the Hilbert series of a local domain of fixed multiplicity. This cannot happen if the associated graded ring is reduced:

\begin{theorem}\label{tGred}
Fix an integer $e>0$. There exist constants, depending only on $e$, bounding respectively the projective dimension and the Betti numbers of any local ring $R=Q/I$ of multiplicity $e$, connected in codimension $1$, with algebraically closed residue field, and whose associated graded ring is reduced; here $(Q,\mm_Q)$ is a regular local ring, possibly of mixed characteristic and ramified, such that $I \subseteq \mm_Q^2$ (projective dimension and Betti numbers of $R$ are intended over $Q$). Also, the set of polynomials $h(t)\in\ZZ[t]$ such that $h(t)/(1-t)^{\dim(R)}$ is the Hilbert series of a local ring $R$ connected in codimension $1$, with algebraically closed residue field, of multiplicity $e$, and such that its associated graded ring is reduced, is finite.
\end{theorem}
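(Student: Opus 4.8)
The plan is to transfer the problem to the associated graded ring $G$, cut it down to dimension one by repeatedly taking general hyperplane sections, and finish with the Abhyankar-type bound of \cite[Theorem 5.6]{DMV}. First, completion changes neither the Betti numbers of $R$ over $Q$ nor the Hilbert series of $G$, and the hypotheses are insensitive to it, so we may assume $R$ is complete; write $R=Q/I$ and $G=P/J$ as in Remark \ref{r:complete}, with $P=k[x_1,\dots,x_n]$, $n=\mu(\mm)$ and $J=\init(I)$. The assumption $I\subseteq\mm_Q^2$ forces $J$ to contain no linear forms. The Betti numbers are upper semicontinuous along the flat degeneration realizing $G$ as a specialization of $R$ --- in the present framework a consequence of Theorems \ref{t:deform equichar} and \ref{t:deform mixed}, with the harmless mixed-characteristic correction recorded in Remark \ref{r:complete} --- so $\beta_i^Q(R)\le\beta_i^P(G)$ for all $i$, hence also $\pdim_Q R\le\pdim_P G$; and $\HS_G(t)=h(t)/(1-t)^{\dim R}$ by definition. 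Since $G$ is reduced it satisfies Serre's condition $(S_1)$, and by Theorem \ref{t:connectedness} it is connected in codimension one, hence equidimensional. Thus it is enough to bound $\beta_{ij}^P(G)$ and $\pdim_P G$, and to prove that only finitely many $h(t)$ can occur, where now $G=P/J$ ranges over the standard graded, reduced, equidimensional, connected-in-codimension-one $k$-algebras of multiplicity $e$ whose defining ideal $J$ has no linear forms, $k$ algebraically closed.

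To this end I would induct on $d=\dim G$ to reduce to $d=1$. If $d=0$ then $G=k$ and there is nothing to prove, so let $d\ge 2$. Because $J$ has no linear forms and $\dim G\ge 1$ (so no minimal prime is the irrelevant ideal), and $k$ is infinite, a general linear form $\ell\in[G]_1$ avoids all associated --- equivalently, all minimal --- primes of $G$, hence is a nonzerodivisor; and since $\dim \Proj G=d-1\ge 1$ while $G$ is reduced and equidimensional, Bertini's theorem ensures that for a general such $\ell$ the quotient $G/\ell G$ is again reduced. Then $G/\ell G$ lies in the same class: it is standard graded, reduced, equidimensional ($\ell$ a nonzerodivisor on the equidimensional catenary ring $G$), of multiplicity $e$ (since $\HS_{G/\ell G}(t)=(1-t)\HS_G(t)$ forces the $h$-polynomials to agree), connected in codimension one (apply Theorem \ref{t:groconn} to $\widehat G$ and descend along $G\to\widehat G$ via \cite[Corollary 2.2.10]{BCRV}), and its defining ideal still has no linear forms (since $J_1=0$ while $[G]_1=[P]_1$ is the full space of linear forms). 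Moreover, $\ell$ being a linear nonzerodivisor on $G$ gives $\beta_{ij}^P(G)=\beta_{ij}^{P/(\ell)}(G/\ell G)$ for all $i,j$. Iterating $d-1$ times produces a one-dimensional ring $\overline G=\overline P/\overline J$ in the class, with $\beta_{ij}^P(G)=\beta_{ij}^{\overline P}(\overline G)$ and with $\HS_{\overline G}(t)=(1-t)^{d-1}\HS_G(t)=h(t)/(1-t)$; thus the numerator of the Hilbert series of the one-dimensional ring $\overline G$ is precisely $h(t)$.

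Now the one-dimensional case. The ring $\overline G$ is a one-dimensional reduced standard graded $k$-algebra, hence Cohen--Macaulay, of multiplicity $e$, connected in codimension one, with $\overline J$ having no linear forms; by \cite[Theorem 5.6]{DMV} its codimension $\height\overline J$ is at most $e-1$, so $\overline P$ has at most $e$ variables. Its $h$-vector $(1,h_1,\dots,h_s)$ coincides with the Hilbert function of an Artinian standard graded reduction $\overline G/\ell'\overline G$, so it has non-negative entries and no internal zeros, with $\sum_i h_i=e$; hence $s\le e-1$ and $0\le h_i\le e$ for all $i$, leaving only finitely many possibilities, and therefore only finitely many $h(t)$. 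For the Betti numbers, $\pdim_{\overline P}\overline G=(\#\text{variables})-1\le e-1$ by Auslander--Buchsbaum and $\reg\overline G=s\le e-1$, so $\beta_{ij}^{\overline P}(\overline G)$ vanishes unless $0\le i\le e-1$ and $i\le j\le i+e-1$; and computing $\Tor$ via the Koszul complex on the variables, $\beta_{ij}^{\overline P}(\overline G)\le\binom{\#\text{variables}}{i}\,\HF_{\overline G}(j-i)\le\binom{e}{i}\,e\le 2^e e$, since $0\le\HF_{\overline G}\le e$. All these bounds depend only on $e$, and tracing back gives $\pdim_Q R\le\pdim_P G=\pdim_{\overline P}\overline G\le e-1$ and $\beta_i^Q(R)\le\beta_i^P(G)=\sum_j\beta_{ij}^{\overline P}(\overline G)\le C(e)$, while $h(t)$ ranges over a finite set.

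The point I expect to be the main obstacle is the inductive step: one must carry the whole package of hypotheses --- reducedness, equidimensionality, connectedness in codimension one, absence of linear forms --- through a general linear section, and the delicate part is the reducedness of $G/\ell G$. This is exactly why the induction must stop at dimension one, so that $\dim \Proj G\ge 1$ at every stage and the Bertini-type statement still delivers a reduced quotient; and the transfer of connectedness in codimension one forces one to work through completions in order to be able to apply Theorem \ref{t:groconn}. A secondary technical point is that, in mixed characteristic, the comparison $\beta_i^Q(R)\le\beta_i^P(G)$ has to be read with the correction factor of Remark \ref{r:complete}, but this affects only the constants.
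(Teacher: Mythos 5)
Your proposal replaces the paper's direct appeal to \cite[Theorem 5.2]{GCMPV} with an attempted self-contained argument by general hyperplane sections down to dimension one, which is a genuinely different route. However, the inductive step has a real gap, and it is exactly at the point you yourself flag as delicate: reducedness of $G/\ell G$ for a general linear form $\ell$. Bertini in the form you invoke controls $\Proj(G/\ell G)$, not the graded ring $G/\ell G$ itself, and the two differ precisely when $G/\ell G$ acquires an embedded prime at the irrelevant maximal ideal. Since $G$ reduced only gives $(S_1)$, and a linear nonzerodivisor drops each $\depth_{\pp}$ by one, nothing prevents $\depth(G/\ell G)_{\mm}$ from being $0$ even when $\dim(G/\ell G)\ge 1$; in that case $\mm$ is embedded and $G/\ell G$ is not reduced, so neither the Cohen--Macaulayness of the one-dimensional ring you eventually reach nor the bound $\HF\le e$ survives the iteration.

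Concretely, take $G=k[s^4,s^3t,st^3,t^4]\subset k[s,t]$ (the cone over a smooth rational quartic in $\PP^3$): it is a two-dimensional domain, hence reduced, equidimensional and connected in codimension one, of multiplicity $4$ and with defining ideal of height $2<e$, so it falls squarely in the class under consideration. But $G$ is not normal while it satisfies $(R_1)$, hence it fails $(S_2)$ and $\depth G=1$. Consequently for \emph{every} linear nonzerodivisor $\ell$ one has $\depth(G/\ell G)=0$, so $G/\ell G$ has an embedded prime at the vertex and is not reduced, already at the first step of your induction. Your remark that stopping at dimension one (so that $\dim\Proj G\ge 1$ at each stage) rescues the Bertini step is therefore not correct: the failure is about depth at the vertex, not about the dimension of $\Proj G$. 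By contrast the paper sidesteps this entirely by invoking \cite[Theorem 5.2]{GCMPV}, which provides the needed bounds on projective dimension, regularity and Betti numbers directly for reduced, equidimensional, connected-in-codimension-one graded $k$-algebras of bounded height, after first deriving $\height(J)<e$ from \cite[Proposition 5.2]{GSyz} and Theorem \ref{t:connectedness}; the only other ingredient it shares with your argument is the upper semicontinuity of Betti numbers along the Gr\"obner degeneration. Your dimension-one analysis (Cohen--Macaulayness of a reduced one-dimensional graded ring, the $h$-vector estimate, and the Koszul bound $\beta_{ij}\le\binom{n}{i}\HF(j-i)$) is correct in itself, but the reduction that is supposed to reach it does not go through.
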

\begin{proof}
Let $R=Q/I$ as in the statement, and let $k$ be its algebraically closed residue field. Let $P={\rm gr}_{\mm_Q}(Q) \cong k[x_1,\ldots ,x_n]$, where $n$ is also the embedding dimension of $R$ because of the assumption $I \subseteq \mm_Q^2$. In particular, $G\cong P/J$ with $J=\init(I) \subseteq P$ a homogeneous ideal containing no linear forms. We have that $G$ is connected in codimension $1$, and hence equidimensional, by Theorem \ref{t:connectedness}. Furthermore, $J$ is a radical ideal not containing any linear forms. These two conditions, together with the fact that $k$ is algebraically closed, imply that $\height(J)<e$ (e.g. see \cite[Proposition 5.2]{GSyz}). By \cite[Theorem 5.2]{GCMPV}, there exist constants, depending only on $e$, bounding respectively the projective dimension and the Betti numbers (over $P$) of any such $G=P/J$. Since projective dimension and Betti numbers can only increase when passing to the associated graded ring (for instance, see \cite{Robbiano,RossiSharifan,Sammartano}), the first part of the statement follows.

For the  part concerning $h$-polynomials, again by \cite[Theorem 5.2]{GCMPV} there exist constants, depending only on $e$, bounding respectively the projective dimension, the Castelnuovo-Mumford regularity and the graded Betti numbers (over $P$) of any $G=P/J$ arising as the associated ring of a local ring $R$ as in the statement of the theorem. We conclude because
\[HS_G(t)=\frac{(1-t)^{-\height(J)}\sum_{i=0}^{\mathrm{pdim}(G)}(-1)^i\sum_{j=0}^{\mathrm{reg}(G)}\beta_{i,i+j}t^j}{(1-t)^{\dim(R)}}\]
and, as already pointed out, $\height(J)<e$. Thus there are only finitely many possibilities for the numerator $h_G(t)=h_R(t)$ of the Hilbert series of $R$.
\end{proof}

\begin{remark}
While the finiteness of the set of $h$-polynomials in Theorem \ref{tGred}, even fixing the dimension, fails by \cite{ST}, or also in view of Example \ref{Ex} without the assumption that the associated graded ring is reduced, we do not know examples showing that this assumption is necessary for the boundedness of the projective dimension and the Betti numbers. We therefore propose the following question: is there a bound in terms of the Hilbert-Samuel multiplicity of a complete local domain $R=Q/I$, with algebraically closed residue field, for the minimal number of generators of $I$? And for the projective dimension of $R$ over $Q$? In these questions $(Q,\mm_Q)$ is a regular local ring such that $I \subseteq \mm_Q^2$. As a variant of the questions above one can also fix the Krull dimension of $R$ in addition to the multiplicity. 
\end{remark}

\section*{Acknowledgments} We thank Craig Huneke, Hailong Dao, and Linquan Ma for several useful discussions. We thank Linquan Ma and Kevin Tucker for suggesting a shorter and more conceptual proof of Lemma \ref{l:tricky general}. We would also like to thank the anonymous referee for useful comments. The authors were partially supported by the PRIN 2020 project 2020355B8Y ``Squarefree Gr{\"o}bner degenerations, special varieties and related topics'', by the MIUR Excellence Department Project CUP D33C23001110001, and by INdAM-GNSAGA. This material is based upon work supported by the National Science Foundation under Grant No. DMS-1928930 and by the Alfred P. Sloan Foundation under grant G-2021-16778, while the first and third authors were in residence at the Simons Laufer Mathematical Sciences Institute (formerly MSRI) in Berkeley, California, during the Spring 2024 semester.

\section*{Data Availability Statement}
Data sharing not applicable to this article as no datasets were generated or analysed during the current study.

\section*{Conflict of Interest Statement}
On behalf of all authors, the corresponding author states that there is no conflict of interest. 

\bibliographystyle{alpha}
\bibliography{References}

\end{document}